\newcommand{\VdW}{$(\mbox{\hyperref[pg:Vd]{$VD$}})_\omega$}
\newcommand{\PW}{$(\mbox{\hyperref[pg:Poincare]{$P$}})_\omega$}
\def\mathcal{\mathscr}
\newcommand{\rrvert}{\vert}
\newcommand{\llvert}{\vert}
\def\nicefrac{\frac}
\def\dotsc{\ldots}
\def\dotsb{\cdots}
\newtheorem{theorem}{Theorem}
\newtheorem{corollary}[theorem]{Corollary}
\newtheorem{lemma}[theorem]{Lemma}
\newtheorem{proposition}[theorem]{Proposition}
\newtheorem{theoremmm}{Theorem}% \ref{no sublinear harmonic}'
\newcommand{\Cc}{\mathbf{C}}
\newcommand{\Dd}{\mathbf{d}}
\newcommand{\Z}{\mathbb Z}
\newcommand{\R}{\mathbb R}
\newcommand{\E}{\mathbb E}
\renewcommand{\P}{\mathbb P}
\newcommand{\Bb}{\mathbf{B}}
\newcommand{\Pp}{\mathbf{P}}
\newcommand{\Hh}{\mathbf{H}}
\newcommand{\Ee}{\mathbf{E}}
\newcommand{\law}{\mathcal{L}}
\newcommand{\supp}{\operatorname{supp}}
\newcommand{\eqref}[1]{(\ref{#1})}
\begin{document}
\begin{frontmatter}

%\dochead{}
\title{Disorder, entropy and harmonic functions}
\runtitle{Disorder, entropy and harmonic functions}

\begin{aug}
\author[A]{\fnms{Itai}~\snm{Benjamini}\ead[label=e1]{itai.benjamini@weizmann.ac.il}},
\author[B]{\fnms{Hugo}~\snm{Duminil-Copin}\ead[label=e2]{hugo.duminil@unige.ch}\thanksref{T1}},
\author[A]{\fnms{Gady}~\snm{Kozma}\corref{}\ead[label=e3]{gady.kozma@weizmann.ac.il}\thanksref{T2}}
\and
\author[C]{\fnms{Ariel}~\snm{Yadin}\ead[label=e4]{yadina@bgu.ac.il}}
\affiliation{Weizmann Institute of Science, Universit\'e de Gen\`eve,\break Weizmann Institute of Science and Ben
Gurion University of the Negev}
\thankstext{T1}{Supported by the EU Marie-Curie RTN CODY, the ERC AG
CONFRA, as well as by the Swiss {FNS} and the Weizmann institute.}
\thankstext{T2}{Supported by the Israel Science Foundation and the
Jesselson Foundation.}
\address[A]{I. Benjamini\\
G. Kozma\\
Department of Mathematics\\
Weizmann Institute of Science\\
Rehovot\\
Israel\\
\printead{e1}\\
\phantom{E-mail:\ }\printead*{e3}}
\address[B]{H. Duminil-Copin\\
D\'epartement de Math\'ematiques\\
Universit\'e de Gen\`eve\\
Gen\`eve\\
Switzerland\\
\printead{e2}}
\address[C]{A. Yadin\\
Ben Gurion University of the Negev\\
Beer Sheva\\
Israel\\
\printead{e4}}
\runauthor{Benjamini, Duminil-Copin, Kozma and Yadin}
%\author{\fnms{}~\snm{}\corref{}}
%\and
%\author{\fnms{}~\snm{}}
%\runauthor{}
%\affiliation{}
%\dedicated{}
%\address{} %adresu isvedimo komanda gale!
%\address{}
\end{aug}

% HISTORY:
\received{\smonth{5} \syear{2013}}
\revised{\smonth{3} \syear{2014}}
%\accepted{\smonth{} \syear{}}

% ABSTRACT
%
\begin{abstract}
We study harmonic functions on random environments with
particular emphasis on the case of the infinite cluster of
supercritical percolation on $\Z^d$. We prove that the vector space
of harmonic functions growing at most linearly is $(d+1)$-dimensional
almost surely. Further, there are no nonconstant sublinear
harmonic functions (thus implying the uniqueness of the
corrector). A main ingredient of the proof is
%the existence of a coupling, given by
a quantitative, annealed version of the Avez
entropy argument. This also provides bounds on the derivative of the
heat kernel, simplifying and generalizing existing results. The
argument applies to many different environments; even
reversibility is not necessary.
\end{abstract}

% KEYWORDS
% Pirmas kwd is didziosios raides
%
\begin{keyword}[class=AMS]
\kwd[Primary ]{60K37}
\kwd[; secondary ]{31A05}
\kwd{82B43}
\kwd{37A35}
\kwd{60B15}
\kwd{60J10}
\kwd{20P05}
\end{keyword}

\begin{keyword}
\kwd{Harmonic functions}
\kwd{percolation}
\kwd{random walk in random environment}
\kwd{stationary graphs}
\kwd{entropy}
\kwd{Avez}
\kwd{Kaimanovich--Vershik}
\kwd{corrector}
\kwd{IIC}
\kwd{UIPQ}
\kwd{planar map}
\kwd{anomalous diffusion}
\end{keyword}

%\begin{keyword}[class=AMS]
%\kwd[Primary ]{}
%\kwd{}
%\kwd[; secondary ]{}
%\end{keyword}
%\begin{keyword}
%\kwd{}
%\end{keyword}
\end{frontmatter}

%s1 #&#
\section{Introduction}\label{sec1}

Since the work of Yau in 1975, where the Liouville
property for positive harmonic functions on complete manifolds with
nonnegative Ricci curvature was proved \cite{Yau75}, the structure of various
spaces of harmonic functions has been at the heart of geometric
analysis. Some years later, Yau conjectured that the space of
polynomial growth harmonic functions of fixed order is always finite
dimensional in open manifolds with nonnegative Ricci curvature.
Extensive literature has appeared on this conjecture and related
problems. Understanding progressed quickly (Yau's conjecture was
proved by Colding and Minicozzi \cite{CM97}) and gave birth to many
tools; see \cite{Li93} for an introduction to the subject. %For
%instance, we mention the use of gradient estimates together with
%Harnack's inequalities to control the heat kernel.

In the algebraic setting, bounded harmonic functions played a central role
since the introduction of the Poisson boundary by Furstenberg
\cite{F63,F71}; see also the survey \cite{W94}. Recently, the
geometric approach made a remarkable appearance in the algebraic
realm when Kleiner proved that the space of harmonic functions with fixed
polynomial growth on the Cayley graph of a group with polynomial
volume growth is finite dimensional using the approach of \cite{CM97}.
He used this fact to provide a new proof of Gromov's theorem \cite
{Kle10}; see \cite{ST} for a quantitative version of this theorem.

Another place where harmonic functions have played an important role
recently is in the proof of the central limit theorem on
random graphs. A central element in the proofs (see, e.g., \cite
{SS04,MP07,BB07,GO11}) is
the construction of a harmonic function $h$ on the cluster which is
close to linear---the term $\chi(x)=h(x)-\langle x,v\rangle$ is
called the
corrector and once one shows that $\chi(x)=o(\|x\|)$, the proof may proceed.

The focus of this article is the case of random graphs.
Classical tools of geometric analysis do not extend to
this context in a straightforward way. Indeed, a random environment is
not regular at the microscopic scale. In order to understand harmonic
functions, one thus needs to make use only of the control of the
macroscopic behavior of the environment.
Let us take supercritical percolation as an example; see
\cite{G99} for background and definitions.

For $p\in(0,1)$, consider the random graph $G=(V(G),E(G))$
defined by $V(G)=V(\Z^d)$ and $E(G)$ being a random set containing each
edge of $\Z^d$ with probability $p$, independently of the other edges.
It is classical that (in dimension $d \geq2$)
there exists $p_c(d)\in(0,1)$ such that for $p<p_c(d)$,
there is almost surely no infinite connected component (also called cluster),
while for $p>p_c(d)$, there is a unique infinite cluster. When $p>p_c(d)$,
we denote this cluster by $\omega$.

%th1 #&#
\begin{theorem}\label{thm:percolation}
Let $d\ge2$, and let $p>p_c(d)$. Then with probability 1, the
infinite cluster $\omega$ has no nonconstant sublinear harmonic functions.
\end{theorem}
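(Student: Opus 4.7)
We propose the following strategy. Consider the simple random walk $(X_n)_{n\ge 0}$ on the infinite cluster $\omega$ started from a fixed vertex $o\in\omega$, and recall that for any harmonic function $h$ on $\omega$ the process $h(X_n)$ is a martingale. The aim is to show that two independent walks $(X_n)$ and $(Y_n)$ started at nearby points $x,y\in\omega$ can be coupled so that, at a well-chosen time $n \gg |x-y|^2$, they meet with high probability. Once such a coupling is in hand, for any sublinear harmonic~$h$ one has
\[
h(x)-h(y)\;=\;\E\!\left[h(X_n)-h(Y_n)\right]\;=\;\E\!\left[(h(X_n)-h(Y_n))\,\mathbf{1}_{X_n\ne Y_n}\right],
\]
and the right-hand side can be bounded by combining the failure probability of the coupling with the growth of $|h|$ on the range of the two walks, which is confined to a ball of radius $O(\sqrt n\,\log n)$ by the Gaussian heat kernel estimates of Barlow. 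By choosing $n$ large with $|x-y|$ fixed, sublinearity makes the right-hand side tend to $0$, forcing $h(x)=h(y)$ and hence $h$ constant.

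The existence of such a coupling is precisely the role of the quantitative annealed Avez entropy argument announced in the abstract. Let $H_n(\omega,o)=-\sum_x p_n(\omega,o,x)\log p_n(\omega,o,x)$ denote the quenched entropy of the walk at time $n$. A quantitative bound of the form $\E[H_{n+1}(\omega,o)-H_n(\omega,o)]=O(1/n)$, annealed over $\omega$, prevents the walk from concentrating on any small set and, via the standard relations between the entropy decrement, the Fisher information of the heat kernel, and the size of its spatial gradient, controls the total variation distance $\|p_n(\omega,x,\cdot)-p_n(\omega,y,\cdot)\|_{\mathrm{TV}}$ for neighbours $x,y$. Iterating this control along a path of length $|x-y|$ will then produce the coupling used in the first paragraph. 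That the correct order of magnitude for $H_n$ should be $\tfrac{d}{2}\log n$ is consistent with Barlow's Gaussian estimates on the supercritical cluster.

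The principal obstacle is proving this annealed entropy estimate on a random environment which is only quasi-isometric to $\Z^d$ on large scales and has no pointwise Harnack inequality. The idea will be to exploit stationarity of the percolation measure under $\Z^d$-translations together with ergodicity in order to average out the local irregularities, reducing the problem to a single-scale bound that depends only on the law of a typical finite neighbourhood of the origin. A secondary subtlety, needed for the extension to non-reversible environments claimed in the abstract, is to keep the argument free of spectral gap and isoperimetric inputs and to rely only on heat kernel tail bounds together with entropy identities. Once the entropy estimate is established, the martingale coupling of the first paragraph immediately yields the absence of non-constant sublinear harmonic functions on~$\omega$.
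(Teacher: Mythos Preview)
Your outline has the right ingredients --- the entropy decrement $H_n-H_{n-1}=O(1/n)$ is the key, and it does follow from polynomial volume growth --- but the way you use it has a quantitative gap that the paper explicitly flags. You want to bound $\Ee\big[(h(X_n)-h(Y_n))\mathbf{1}_{X_n\ne Y_n}\big]$ by the coupling failure probability times $\sup|h|$ over a ball of radius $R=O(\sqrt n\log n)$. The entropy bound yields only $\Pp(X_n\ne Y_n)\le\|\pp_n(x,\cdot)-\pp_{n-1}(y,\cdot)\|_{\mathrm{TV}}=O(n^{-1/2})$, while sublinearity gives $\sup_{|z|\le R}|h(z)|=o(R)$; the product is $o(\log n)$, not $o(1)$. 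Shrinking $R$ to $K\sqrt n$ does not help, because the tail $\{|X_n|>K\sqrt n\}$ then contributes a term of order $\sqrt n\,e^{-cK^2}$. The underlying issue is that the event $\{X_n\ne Y_n\}$ carries no information about \emph{where} $X_n$ is, so you cannot trade the small failure probability against a localized bound on $h$. The paper makes exactly this point after the proof of Theorem~\ref{no sublinear harmonic}.

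The repair is to replace total variation by the $\chi^2$-type distance
\[
\Delta_n(x,y)^2=\sum_z\frac{(\pp_n(x,z)-\pp_{n-1}(y,z))^2}{\pp_n(x,z)+\pp_{n-1}(y,z)},
\]
which satisfies $|\mu(f)-\nu(f)|\le\Delta(\mu,\nu)\sqrt{\mu(f^2)+\nu(f^2)}$. Hence
\[
|h(x)-h(y)|\le\Delta_n(x,y)\sqrt{\Ee_x[h(X_n)^2]+\Ee_y[h(X_{n-1})^2]},
\]
and the same entropy computation gives $\E[\Delta_n(\rho,X_1)^2]\le 2(H_n-H_{n-1})=O(1/n)$. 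The second factor is now an $L^2$ norm of $h$ under the walk's own law: sublinearity plus diffusivity give $\Ee_\rho[h(X_n)^2]\le\varepsilon\,\Ee_\rho[\Dd(\rho,X_n)^2]+K_\varepsilon=O(\varepsilon n)$, so the product is $O(\sqrt\varepsilon)\to 0$. The $L^2$ norm automatically absorbs the tail that your $L^\infty$-over-a-ball bound cannot.
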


This immediately shows that the corrector $\chi$ is unique, as was
conjectured by Berger and Biskup \cite{BB07}, Question 3.

In more regular settings, claims of this sort have been proved using
the following strategy: try to show that two random walks starting at
neighbors will couple before time $n$ with\vspace*{1pt} probability bigger than
$1-Cn^{-1/2}$. This fact is classical in the case of the hypercubic
lattice $\Z^d$ where an explicit coupling can be exhibited. In the
random context it is not clear how to construct an explicit coupling,
but a number of approaches in the literature allows one to construct a
coupling indirectly. The known Gaussian heat kernel bounds [see
\eqref{a} below] allow one to construct a coupling that will fail with
probability $n^{-\epsilon}$. Using\vspace*{1pt} also the central limit theorem
already mentioned, one could improve this to $n^{-1/2+o(1)}$. Nevertheless,
getting the precise $n^{-1/2}$ seems difficult with these
approaches. The approach we will apply below not only gives the precise
order $n^{-1/2}$,
but the proof is also significantly simpler than those just suggested.
%'explicit' coupling is much harder to obtain in the random case. and
%we use entropy \cite{Av} to construct an abstract coupling.

%More precisely, we quantify and generalise the celebrated
%Kaimanovich-Vershik
%\cite{KV} argument proving the equivalence on Cayley graphs between
%sublinear entropy for the simple random walk and Liouville's property.
%More precisely, we extend it to the random context (this was already
%done in \cite{BC11}) and we make it quantitative. For stationary
%random graphs of polynomial growth, the argument allows to construct a
%coupling in time between walks starting at neighbors which is close to
%the optimal one.

The proof uses an
entropy argument similar to Avez \cite{Av} who
showed that a Cayley graph satisfies the Liouville property if
the entropy of the random walk on it is sublinear. In fact the ``if'' here
is an ``if and only if'' as was shown by Kaimanovich and Vershik
\cite{KV} and, with a different approach, by Derriennic \cite{D80},
but we will not need the other direction. Two extensions
of this result were known before: it
applies to random graphs \cite{BC10}, and it can be quantified \cite{EK10},
Section~5.
It turns out that the two
generalizations can be applied simultaneously.
Further, Theorem~\ref{thm:percolation} is but an example: the
techniques work in great generality; even reversibility is not needed. Only
stationarity of the walk and some weak (sub-)diffusivity are
used.
%Let us detail the precise assumptions.
Precise assumptions are detailed below.

%pa1.subsection.subsubsection.1 #&#
\subsection*{The environment as viewed from the particle} To state the
full result, we need to define what we mean by ``environment.''
We are interested in environments which are somehow translation invariant.
This notion extends the transitivity condition to the random
context. Historically, this traces to the works of Papanicolaou and
Varadhan \cite{PV82} and Kozlov \cite{K85} who studied random walk in
random environments on
$\mathbb{Z}^d$ by translating the environment so that the walker
remains at $\vec{0}$. In other words, instead of having a walker
move around in some environment, the walker stays at the origin, and the
environment moves ``below'' it, hence the name \emph{the environment
as viewed from the particle}. When the distribution of the
environment stays the same after a single step of this process, the
environment is called \emph{stationary}.

The notion was extended beyond $\mathbb{Z}^d$ in \cite{LPP95} who
showed a similar phenomenon for Galton--Watson trees: when you do a
single step of random walk starting from the root of the tree, the
resulting random graph has the same distribution with respect to the new
position of the walker.

In such examples the most natural definition of ``having the same
distribution'' uses isomorphisms (in \cite{LPP95} this could be
avoided due to the very simple structure of trees, but it appears,
e.g., in \cite{AL07,BC10}). The resulting definition looks a little
abstract at first, but in fact is very easy to verify in examples. For
example, in the $\mathbb{Z}^d$ case, the isomorphisms would be
translations, while in the Galton--Watson case, they would be a change
of root followed by an arbitrary map. Let us give the details.

Consider a Markov chain $(X_n)_{n\ge0}$ taking values in some set $V$.
The law of this chain can be encoded by a function
$P\dvtx V\times V\to[0,1]$ where $P(x,y)$ denotes the probability to move
from $x$ to $y$. We always assume that our
Markov chain is irreducible, that is, that for any $v,w\in V$ there is
an $n$ such that $P^n(v,w)>0$. A~\emph{rooted Markov chain} is a
triplet $(P,V,\rho)$ where $\rho\in V$ is some vertex that will be
called the \emph{root vertex}. Two rooted Markov chains $(P,V,\rho)$
and $(P',V',\rho')$
are considered
isomorphic if there is a one-to-one map $\phi\dvtx V\to V'$ with $\phi
(\rho
)=\rho'$ and
$P(x,y)=P'(\phi(x),\phi(y))$.

We define an \emph{environment as viewed from the
particle}, abbreviated as simply \emph{environment}, to be a
random rooted Markov chain.
Two environments are considered to have
the same law if they are identical as measures on isomorphism classes
of rooted Markov chains (alternatively, if they
can be coupled in such a way that the resulting rooted
Markov chains are isomorphic with probability 1).

%de2 #&#
\begin{definition}
An environment $(P,V,\rho)$ is called \emph{stationary} if it has the same
law as $(P,V,X_1)$ where $X_1$
is sampled from $P(\rho,\cdot)$.
% -------------------------------> changed here
%is the position of the first step of
%the Markov chain $P$, starting from $\rho$, and independent of the
%random choice of $(P,V,\rho)$.
\end{definition}

%Do you think something like that would help? Frankly after writing
%this I decdided I do not like it.
%Formally, let $\Omega$ is a probability space such that $(P, V, \rho)$
%is a measurable function from $\Omega$ into the space of triplets, and
%such that $X_1$ is a measurable function from $\Omega$ into $V$ such
%that
%$\P(X_1=v | P,V,\rho)=P(\rho,v)$ almost surely. Then
%$(P,V,X_1)$ is a new measurable function from $\Omega$ into the space
%of triplets, and hence is an environment. We require that it has the
%same law as $(P,V,\rho)$.

As we already remarked, stationary environments are very common, and we
provide ten examples in the
end of Section~\ref{sec:entropy}. %Note that the definition above
%requires that $(P,V,\rho)$ and $(P,V,X_1)$ are constructed on the same
%measurable space $(\Omega,\mathcal F)$ in order to compare their laws.
%The construction of $(\Omega,\mathcal F,\mathbb P)$ is not always
%straightforward (sometimes one works directly with equivalence classes
%of rooted Markov chains, where two Markov chains are isomorphic if
%there is a one-to-one map $\phi:V\to V'$ with $\phi(\rho)=\rho'$ and
%$P(x,y)=P'(\phi(x),\phi(y))$).
Most of these examples are embedded
in $\Z^d$, and for these we could have used the definition of
\cite{PV82,K85}. % i.e.\ define an environment to be simply
%and for these we could have used a much simpler definition
%--- there is no need to ``couple the environments such that the
%resulting
%rooted Markov chains are isomorphic with probability 1''.
%and a random rooted Markov chain can be considered as
%a random
%function $P:\Z^d\times\Z^d\to[0,1]$, after shifting $\rho$ to
%$\vec{0}$ (formally this is a stronger requirement, but all our
%examples
%embeddable into $\Z^d$ have it).
Examples~\ref{exmp:poisson}, \ref{exmp:GW} and \ref{exmp:UIPQ},
however, are not embeddable into $\Z^d$, so the isomorphism cannot be
taken to be a ``translation,'' though constructing it is still easy.

A very important subset of stationary environments is given by
environments $V$ with the
structure of a weighted graph [with the weight being a symmetric
positive function $\nu$
on every edge $(v,w)\in E$, and $0$ on every pair $(v,w)\notin
E$]. In such case, $P$ is given by
\[
P(v,w)=\frac{\nu(v,w)}{\nu(v)} \qquad\mbox{where } \nu(v) = \sum
_x \nu (v,x).
\]
These environments will be called \emph{random stationary graphs}. This
particular type of
Markov chain is also commonly called \emph{reversible}. The reversible
case has a rich theory; see, for example, \cite{AL07,BC10} where one
can also find many more
examples. To clearly distinguish between the reversible and nonreversible
case, random stationary graphs will be denoted by $(G,\nu,\rho)$
where $G$ is the graph, $\nu$ is the weight function and $\rho$
is the root.
%One may consider
%for example reversible RWRE (which are just random subgraphs of $\Z^d$
%invariant to translations), and then the problem of stationarity is
%solved. One needs only reweight
%by the degree at 0 to become stationary in our sense. Another set of
%examples comes from taking a sequence of finite graphs, picking the
%root uniformly, and taking the Benjamini-Schramm limit \cite{BS01}.

The graph distance in $G$ is denoted by $\Dd^G(\cdot,\cdot)$ and the
ball of size $r$ centered at $x$ by $\Bb^G_x(r)$. We will also
consider this distance in nonreversible setting, where it is simply the
smallest $n$ such that $P^n(x,y)>0$
(in this case it may fail to be a metric).
Since the distinction between
annealed and quenched statements will be clear in the context, we will
often drop the dependence on $G$ in the notation. For instance,
$\Pp_x^G$, $\Dd^G(\cdot,\cdot)$ and $\Bb^G_x(n)$ will become simply
$\Pp_x$, $\Dd(\cdot,\cdot)$ and $\Bb_x(n)$. For the convenience of the
reader, we collected the notation
and conventions used in this paper in the last section of the
introduction (page \pageref{pg:notation}).

%pa1.subsection.subsubsection.2 #&#
\subsection*{Nonconstant harmonic functions with minimal growth} Let
$P$ be a Markov chain with state space $V$. Then a function $h\dvtx V\to
\R$ is called harmonic if $h(X_n)$ is a martingale, or in other
words, if
\[
h(x)=\sum_yP(x,y)h(y) \qquad\forall x.
\]
%
%The
%space of nonconstant harmonic functions with minimal growth plays a
%specific role in the study of the underlying space.

As already mentioned, harmonic functions have had a number of
important applications recently. Let us expand on the particular
application in Kleiner's proof of
Gromov's theorem \cite{Kle10}. It was known since the 1970s that in order
to prove Gromov's
theorem, it is enough to show that any group with polynomial volume
growth has
a nontrivial finite-dimensional representation. Kleiner showed that
any group has a nontrivial linearly growing harmonic function, and that on
groups with polynomial growth, the dimension of the space polynomially
growing harmonic
functions is finite. Since
the group acts on harmonic functions on its Cayley graph by
translations, this provides a finite dimensional representation and
proves Gromov's theorem. Shalom and Tao
\cite{ST} showed that a quantitative version of Kleiner's proof can be
performed. Further, they characterized the linearly growing harmonic
functions (for groups with polynomial volume growth these are the
nonconstant harmonic functions with minimal growth \cite{HSC93}, Theorem~6.1). They showed (personal communication) that when the group is
nilpotent, any such function must be a character of the group
(or the sum of a character and a constant), in
analogy to the Choquet--Deny theorem \cite{CD60,M66}. For virtually
nilpotent groups this holds \emph{mutatis mutandis}. We plan to
analyze harmonic functions with minimal growth in the context of
Cayley graphs, especially of wreath products, in a future paper. %The
%main contribution of this paper is the study of minimal growth
%harmonic functions on stationary random graphs. %For random
%environments, harmonic functions of minimal growth have also be proved
%to be useful (for instance on infinite percolation clusters, where
%they correspond to projections of the so-called corrector).

We now return to the setting of this paper, that is, of stationary random
graphs. Using the entropy of the random walk, it is possible to bound from
below the minimal growth of nonconstant harmonic functions in terms
of the rate of escape of the random walk. A particularly interesting
case is provided by stationary environments with diffusive behavior,
for which the bound is often sharp. A stationary environment
$(P,V,\rho)$ satisfies \emph{diffusive or subdiffusive behavior}
$(\mathit{DB})$ if
%\newcounter{phoney}
%\refstepcounter{phoney} %makes pg:SDB point here
%
{\renewcommand{\theequation}{DB}
\begin{equation}\label{pg:SDB}
%(DB)
\mbox{there exists $C>0$ such that $\mathbb E \bigl(\Dd(
\rho,X_n)^2 \bigr)\leq Cn$ for every $n$.}
\end{equation}}
\hspace*{-3pt}Here and below $\E$ is the average over both the environment and
over the walk (the so-called annealed average). We may now state our
main result.

%\newcommand{\SDB}{}
%\newcommand{\SDB}{\hyperref[pg:SDB]{$(DB)$}}

%th3 #&#
\begin{theorem}\label{no sublinear harmonic}
Let $(P,V,\rho)$ be a stationary environment such that\break $\mathbb
E (|\Bb_\rho(n)| )\leq Cn^d$ for some constants
$C,d<\infty$
independent of $n$. If $(P,V,\rho)$ satisfies (\ref{pg:SDB}), then for almost
every environment, there are no nonconstant sublinear harmonic functions.
\end{theorem}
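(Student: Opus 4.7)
The plan is a direct martingale/stationarity argument: the annealed martingale $M_n := h(X_n)-h(\rho)$ has orthogonal increments whose variance is, by stationarity, independent of $n$, reducing the theorem to showing this per-step variance vanishes.

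Concretely, I would select, for each realization of the environment, a sublinear harmonic function $h$ in a way that depends only on the unrooted Markov chain $(V,P)$ and not on the root (a measurable-selection theorem suffices). This makes
$$G(x) \;:=\; \sum_y P(x,y)\bigl(h(y)-h(x)\bigr)^2$$
a measurable functional of the rooted Markov chain. Stationarity then gives $(V,P,X_k)\stackrel{\law}{=}(V,P,\rho)$ for every $k\ge 0$, hence $\E[G(X_k)]=\E[G(\rho)]=:a$, and the $L^2$ orthogonality of martingale increments yields the identity
$$\E[M_n^2] \;=\; \sum_{k=0}^{n-1}\E[G(X_k)] \;=\; na.$$
Showing $a=0$ completes the proof: the nonnegativity of $G$ then forces $G(\rho)=0$ a.s., making $h$ constant on the one-step successors of $\rho$; applying the same fact at $X_k$ for every $k$ (a countable union) and invoking irreducibility propagates this to the whole connected component.

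To establish $a=0$, I would combine sublinearity with the maximum principle. For each $\e>0$, sublinearity supplies a random radius $R_\e(\omega)<\infty$ such that $|h(x)-h(\rho)|\le \e\,\Dd(\rho,x)$ whenever $\Dd(\rho,x)\ge R_\e$. Since the walk exits any finite set almost surely by irreducibility, the maximum principle applied on $\Bb_\rho(R_\e)$ bounds $\max_{\Bb_\rho(R_\e)}|h-h(\rho)|$ by its boundary value, which is at most $\e(R_\e+1)$. Consequently
$$|h(x)-h(\rho)| \;\le\; \e\bigl(\Dd(\rho,x)+R_\e+1\bigr) \qquad \text{for every }x\in V.$$
Substituting into $\E[M_n^2]=na$ and invoking \SDB{} yields $na \le 2C\e^2 n + 2\e^2\,\E\bigl[(R_\e+1)^2\bigr]$; dividing by $n$, letting $n\to\infty$, and then $\e\to 0$ produces $a=0$.

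The hard part will be controlling $\E[R_\e^2]$, which is not automatic from sublinearity alone and is, I suspect, exactly where the volume-growth hypothesis $\E|\Bb_\rho(n)|\le Cn^d$ must play a role, enabling a refinement of the crude maximum-principle bound via the $O(R^d)$ estimate on $|\Bb_\rho(R)|$. If this integrability turns out to be technically awkward, my fallback is a quenched version of the same scheme: Fatou applied to \SDB{} gives $\liminf_n \E^\omega[\Dd(\rho,X_n)^2]/n<\infty$ for almost every $\omega$, and a Markov-chain ergodic theorem for the environment as viewed from the walker furnishes a quenched analogue of $\E[M_n^2]=na$, from which $a=0$ follows with no moment hypothesis on $R_\e$ whatsoever. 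The entire obstruction is localized at the short-distance contribution, and bypassing it is presumably the reason the authors deploy their quantitative annealed Avez--Erschler--Karlsson--Benjamini--Curien entropy machinery, which produces a direct coupling of walks at the precise rate $n^{-1/2}$ and sidesteps the need for any control on $h$ near $\rho$.
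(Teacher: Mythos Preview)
Your martingale scheme has two gaps, and neither is the cosmetic obstacle you suspect.

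\textbf{The selection step does not survive automorphisms.} For the identity $\E[G(X_k)]=\E[G(\rho)]$ you need $G$, and hence $h$, to be a functional of the \emph{unrooted} pair $(V,P)$. But then $h$ must be invariant under every automorphism of $(V,P)$. On a deterministic transitive environment such as $\Z^d$ (which is a legitimate stationary environment covered by the theorem) this forces $h$ to be constant, so the argument collapses to a tautology and proves nothing about the actual sublinear harmonic functions on $\Z^d$. If instead you let $h$ depend on the rooted environment $(V,P,\rho)$, the shift to $X_k$ hands you a \emph{different} $h$, and the per-step variance is no longer constant. Measurable-selection theorems do not help here: the obstruction is not measurability but equivariance.

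\textbf{Volume growth does not touch $R_\e$.} The hypothesis $\E|\Bb_\rho(n)|\le Cn^d$ says nothing about how far out one must go before a particular $h$ becomes $\e$-linear; there is no mechanism linking $R_\e$ to ball sizes. In the paper the volume bound enters at a completely different point: it controls the entropy via $H_n\le\log\E|\Bb_\rho(n)|\le d\log n+C$, yielding $H_n-H_{n-1}\le c/n$ along a subsequence, \emph{before any harmonic function is mentioned}. Your quenched fallback does not escape either: the ergodic theorem for the environment process needs ergodicity (not assumed) and $G\in L^1$ (which is $a<\infty$, the very thing in question), and even then converting $\frac1n\sum_k G(X_k)\to a$ into $\frac1n\Ee^\omega[M_n^2]\to a$ requires uniform integrability you do not have.

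The paper's route is structurally different, not merely a workaround. Theorem~\ref{entropy} gives the annealed bound $\E[\Delta_n(\rho,X_1)^2]\le 2(H_n-H_{n-1})$ on a coupling quantity that involves no harmonic function. Fatou then produces, for a.e.\ fixed $\omega$, a random subsequence along which both $\Ee_\rho[\Delta_n(\rho,X_1)^2]\le \cc/n$ and $\Ee_\rho[\Dd(\rho,X_n)^2]\le\cc n$. With $\omega$ frozen, one now takes an \emph{arbitrary} sublinear $h$ on that environment; the constant $K$ in $h(x)^2\le\e\Dd(\rho,x)^2+K$ is finite but may depend on $h,\e,\omega$, and this is harmless because no further expectation over $\omega$ is taken. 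The inequality $|h(\rho)-h(X_1)|\le\Delta_n\sqrt{2\Ee_\rho[h^2(X_n)]}$ then forces $h(\rho)=h(X_1)$. The ordering---freeze $\omega$ via Fatou \emph{first}, choose $h$ \emph{afterward}---is what eliminates both the selection problem and the integrability of $R_\e$, and it is the heart of the argument rather than a convenience.
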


We say that $h$ is a sublinear function if $h(x)=o(\Dd(\rho,x))$ as
$\Dd(\rho,x)\to\infty$. Restricting to the case of percolation, it is
also quite natural to ask what happens with functions which are
sublinear with respect to the Euclidean distance $\|x\|$ (e.g.,
this is how the question is formulated in \cite{BB07}). The
result of Antal and Pisztora \cite{AP96} yields that graph and
Euclidean distances are comparable on the infinite cluster, and that
therefore the previous question follows from Theorem~\ref{no sublinear
harmonic}.

As already stated, Theorem~\ref{no sublinear harmonic} applies to many
different models, some of them
significantly less well understood than percolation. See a list of
examples at the end of Section~\ref{sec:entropy}.

%This theorem should be compared to the Cayley graph case. In this
%context, it is known that groups of polynomial growth do not admit
%sublinear growth harmonic functions.

%\newcommand{\thmtp}{}

%\newcommand{\thmtp}{\hyperref[page:2p]{\ref*{no sublinear harmonic}'} }

Whether (\ref{pg:SDB}) follows from
polynomial growth in the reversible case is an interesting question.
The Carne--Varopoulos bound
\cite{Car85,Var85} gives that $\Ee_\rho( d(\rho,X_n))\le C\sqrt
{n\log
n}$, which would give (with the same proof as that of Theorem~\ref{no
sublinear harmonic}; see Theorem \ref{page:2p} in Section~\ref{sec:entropy})
that any stationary
random graph with polynomial volume growth has no nonconstant harmonic
functions
$h$ with $h(x)\le C\Dd(\rho,x)/\sqrt{\log\Dd(\rho,x)}$. Without
stationarity the
Carne--Varopoulos bound $\sqrt{n\log n}$ cannot be improved, as was
shown by Barlow and Perkins \cite{BP89}. Kesten gave a beautiful
argument that a
stationary random graph embedded in $\Z^d$ satisfies (\ref{pg:SDB}); see, for example,
\cite{BP89}, Section~2. But it does not seem to apply just assuming
polynomial growth.

%Let us mention that \SDB{} is not very strong. For instance, it is
%true in most of the environments with polynomial growth. It is
%reminiscent of the Varopoulos-Carne bound for Cayley graphs with
%polynomial growth \cite{Car85,Var85}.

The relation between entropy, harmonic functions and speed of the
random walk
holds for more general environments
(e.g., with larger growth).
We defer to Section~\ref{sec:entropy} for a more complete account of
this question.

%pa1.subsection.subsubsection.3 #&#
\textit{Polynomially growing functions}. As
in the case of manifolds, we are interested in the dimension of
the space of
harmonic functions with prescribed polynomial growth. Of course, one
can encounter very different behavior depending on the environment
(like in the deterministic case). Hence we will assume that our
environments satisfy volume doubling and
the Poincar\'e inequality.
%In this section we do not assume stationarity,
%and hence we do not need a root for our graph. The object of interest
%is therefore simply a random weighted graph $(G,\nu)$.
Here is the
precise formulation of our assumptions on the environment:
let $(G,\nu,\rho)$ be a rooted weighted graph.

\newcommand{\CVD}{\Cc_{\mathrm{VD}}}
\newcommand{\CP}{\Cc_{\mathrm{P}}}

%\medbreak
%\refstepcounter{phoney}
%
\begin{longlist}[$(\mathit{VD})_G$.]
%\begin{tabular}{c p{0.7\textwidth} }
%\label{pg:Vd}
\item[$(\mathit{VD})_G$.] $(G,\nu,\rho)$ satisfies the \emph{anchored volume doubling property}
$(\mathit{VD})_G$ if there exists
$0<\CVD<\infty$ such that the following holds. For every $\lambda
<\infty$,
there exists $n_0\in\mathbb N$ such that for all $n>n_0$,
and for every $x\in\Bb_\rho(\lambda n)$,
\[
\nu\bigl(\Bb_x(2n)\bigr) \le \CVD\nu\bigl(\Bb_x(n)
\bigr), %
\]
where $\nu(\Bb)$ is the total weight of the edges in the ball $\Bb$.
%%ball of
%radius $r$ centered at $x$.
%
%
%\begin{tabular}{c p{0.7\textwidth} }
%\refstepcounter{phoney}
%\label{pg:Poincare}
\item[$(P)_G$.]
$(G,\nu,\rho)$ satisfies the \emph{anchored Poincar\'e inequality}
$(P)_G$ if there exists $\CP<\infty$
such that the following holds. For every $\lambda<\infty$,
there exists $n_0\in\mathbb N$ such that for all $n>n_0$,
for every $x\in\Bb_\rho(\lambda n)$ and every $f\dvtx \Bb_x(2
n)\rightarrow
\mathbb R$,
\[
\sum_{y\in\Bb_x(n)} \bigl(f(y)-\overline{f}_{\Bb_x(n)}
\bigr)^2\nu (y)\leq \CP n^2\sum
_{(y,z)\in E(\Bb_x(2n))}\bigl|f(y)-f(z)\bigr|^2\nu(y,z),
\]
where
\[
\overline{f}_{\Bb_x(n)}=\frac{1}{\nu(\Bb_x(n))}\sum_{y\in\Bb
_x(n)}f(y)
\nu(y). %
\]
\end{longlist}

%% $\CVD$ and $\CP$ may depend on the random choice of $(G,\nu)$ (though
%% we do not have any interesting example which actually uses this
%% freedom). The minimal $n$ from which the properties hold may depend
%% both on the environment and on $\lambda$.

%\newcommand{\VdG}{}
%\newcommand{\PG}{}
%%For percolation
%\newcommand{\VdW}{}
%\newcommand{\PW}{}

%\newcommand{\VdG}{$(\mbox{\hyperref[pg:Vd]{$VD$}} )_G$}
%\newcommand{\PG}{$ (\mbox{\hyperref[pg:Poincare]{$P$}} )_G$}
%For percolation
%\newcommand{\VdW}{$ (\mbox{\hyperref[pg:Vd]{$VD$}} )_\omega$}
%\newcommand{\PW}{$ (\mbox{\hyperref[pg:Poincare]{$P$}} )_\omega$}

%(if you are reading the online version, \VdG{} and \PG{} will usually
%be hyperlinks here, click them if you forget the definition).

Similar properties are classical in geometric analysis. They
go back to
the theory developed by De Giorgi, Nash and Moser \cite
{Mos61,Mos64,Nas58,DeG57} in the fifties and sixties for uniformly
elliptic second-order
operators in divergence form. In the classic context, they imply
the Harnack principle and Gaussian bounds for the heat kernel. While
the definitions above have no randomness in them, they are tailored
for the random case: they take into
consideration that in most examples of interest these properties do not
hold from every point since some unusual points always exist. For this
reason, the properties are required to hold for balls which are not too
far from our root $\rho$,
relative to their size. This is reminiscent of Barlow's good and very
good balls \cite{Bar04}, but our requirements are much weaker, we only
need the properties to hold for ``macroscopic balls,'' balls whose
distance to $\rho$ is proportional to their radius.

Let us remark on the appearance of the number 2 in $\Bb_x(2n)$ in both
properties. For the volume doubling property it is clear that these
properties are equivalent for all choices bigger than 1; that is, if
one was to
define a ``3-volume doubling property,'' then it would be equivalent to
the ``2-volume doubling property'' defined above, though perhaps with
different $\CVD$ and minimal $n$. The same holds for
the Poincar\'e inequality, under the assumption of volume
doubling. This is well known in the standard
settings (see, e.g.,  \cite{J86}, Section~5), and the proof
carries over to
the anchored case without any change.

With these definitions we can state the following easy but, we
believe, conceptually important theorem. Note that the theorem is for
a fixed graph (though the most interesting applications are for random graphs).

%th4 #&#
\begin{theorem}\label{finite dimension}
Let $(G,\nu,\rho)$ be a rooted weighted graph. If $(G,\nu,\rho)$
satisfies $(\mathit{VD})_G$ and $(P)_G$, then for every
$k>0$, the space of harmonic functions with $|h(x)| \le C\Dd(\rho,x)^k$
for all $x$ far enough from $\rho$, is finite dimensional.

Further, the bound on the dimension depends only on $k$, $\CVD$ and
$\CP$,
and not on $n_0(\lambda)$.
\end{theorem}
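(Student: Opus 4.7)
The plan is to follow the Colding--Minicozzi approach \cite{CM97}, adapted to the graph setting by Kleiner \cite{Kle10}. The first, analytic, step is to run a discrete Moser iteration from \VdG{} and \PG{} to derive a mean value inequality for harmonic functions: there exists $\Cc$ depending only on $\CVD$ and $\CP$ such that, for $\lambda$ fixed large enough and $n$ large (depending on $\lambda$ and the environment), every $h$ harmonic on $\Bb_x(2n)$ with $x \in \Bb_\rho(\lambda n)$ satisfies
$$h(x)^2 \leq \frac{\Cc}{\nu(\Bb_x(n))} \sum_{y \in \Bb_x(n)} h(y)^2 \nu(y).$$
The one subtlety relative to the deterministic case is that all intermediate balls appearing in the iteration must themselves be ``macroscopic'', which is automatic once $\lambda$ is taken large enough.

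Next, let $V$ be an $N$-dimensional subspace of $\{h \text{ harmonic} : h(x) \leq \Dd(\rho,x)^k \text{ for } x \text{ large}\}$. For $r$ sufficiently large (depending on $V$ and the environment), the bilinear form $\langle f,g\rangle_r := \sum_{x \in \Bb_\rho(r)} f(x) g(x) \nu(x)$ is positive definite on $V$, since a non-trivial polynomially growing harmonic function cannot vanish on arbitrarily large balls around $\rho$. Choose a corresponding orthonormal basis $h_1, \ldots, h_N$ of $V$. Applying the mean value inequality at each $x \in \Bb_\rho(r/4)$ with radius $r/4$, using $\Bb_x(r/2) \subset \Bb_\rho(r)$, and then using \VdG{} to compare $\nu(\Bb_x(r/4))$ with $\nu(\Bb_\rho(r))$, I obtain the pointwise reproducing-kernel bound $\sum_i h_i(x)^2 \leq \Cc' N / \nu(\Bb_\rho(r))$ for every $x \in \Bb_\rho(r/4)$.

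The final step is the Colding--Minicozzi linear-algebra argument. Fix a reference radius $r_0$; the reproducing-kernel bound combined with \VdG{} controls the trace $T(r) := \mathrm{tr}(A_{r_0}^{-1} A_r)$ of one of the two inner products with respect to the other, while polynomial growth of the elements of $V$ provides a matching upper bound. Inductively extracting a codimension-bounded subspace on which $\|f\|_{\Bb_\rho(2r)}^2 \leq \Cc \cdot 4^{k}(1+\varepsilon) \|f\|_{\Bb_\rho(r)}^2$ and iterating against the polynomial growth rate forces $N \leq \Cc''(\CVD, \CP, k)$, which is the desired bound. The \emph{main obstacle} is not really conceptual --- the argument is well established for manifolds and for deterministic graphs --- but one must carefully verify that every ball appearing in the Moser iteration and in the reproducing-kernel step remains macroscopic (relative to $\rho$), so that \VdG{} and \PG{} are available throughout; choosing $\lambda$ large at the outset handles this bookkeeping.
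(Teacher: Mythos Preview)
Your outline takes a genuinely different route from the paper's, and there is a real gap in your first step.

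The paper never proves (or uses) a pointwise mean value inequality. Instead it combines the reverse Poincar\'e inequality with the Poincar\'e inequality at scale $\varepsilon n$ to obtain a purely $L^2$--$L^2$ estimate: if $h$ is harmonic and has zero mean on each ball of a proper covering of $\Bb_\rho(n)$ by balls of radius $\varepsilon n$, then $\|h\|_{L^2(\Bb_\rho(n))}^2 \le \cc\,\varepsilon^2\,\|h\|_{L^2(\Bb_\rho(4n))}^2$. Since the covering has at most $M=M(\varepsilon,\CVD)$ balls, this inequality holds on a subspace of codimension at most $M$ of any finite-dimensional space $V$ of harmonic functions. One then compares Gram determinants of a fixed basis of $V$ with respect to $\langle\cdot,\cdot\rangle_n$ and $\langle\cdot,\cdot\rangle_{4n}$, iterates along scales $n,4n,16n,\dots$, and uses polynomial growth to force $\dim V\le 2M$. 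This is exactly Kleiner's argument; no Moser iteration, no $L^\infty$ bound, no reproducing kernel.

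Your route via Moser iteration and the mean value inequality is the alternative strategy of Delmotte \cite{Del98}, which the paper mentions (in the open-questions section) as a separate approach. The issue is not merely that the intermediate balls in the iteration be macroscopic---they are, as you say---but that each Moser step requires a Sobolev-type inequality $\|f\|_{2\kappa}\le C(r\|\nabla f\|_2+\|f\|_2)$, and the standard derivations of Sobolev from \PG{} and \VdG{} (Jerison, Hajłasz--Koskela, Saloff-Coste) go through a Whitney-type chaining that invokes Poincar\'e on balls of \emph{all} scales up to $r$, not just on balls of radius comparable to $r$. Under the paper's hypotheses \PG{} is only available for macroscopic balls, so this step does not obviously go through; you would need either a different derivation of Sobolev or a stronger hypothesis. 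The paper's $L^2$--$L^2$ approach is designed precisely to avoid this: it uses \PG{} and \VdG{} at exactly one scale per iteration and never needs a pointwise bound, which is why the authors emphasize that only macroscopic estimates are required.
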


This theorem represents a discrete anchored version of Yau's conjecture
except that the Poincar\'e inequality must be assumed since it is not
automatically satisfied (in Yau's settings every manifold with
nonnegative Ricci curvature satisfies a Poincar\'e inequality
\cite{B82} while in Kleiner's, every group satisfies an appropriate
version of the Poincar\'e inequality; see, e.g.,  \cite{PSC}, Lemma~4.1.1).
%Also, in those settings, the Poincar\'e inequalities are similar to
%ours,
%but since there is no disorder and there is no need to consider
%unusual points.
The proof of this theorem follows the existing strategy developed in
\cite{CM97,Del98,Kle10,ST,T10}. Let us stress again that the
interesting part
is that it requires only \emph{macroscopic} volume growth and
Poincar\'e inequality: the definitions of $(\mathit{VD})_G$ and $(P)_G$ only
examine balls of radius $n$ inside $\Bb_\rho(\lambda n)$ for some finite
$\lambda$.

%% It is natural to ask if Theorem~\ref{finite dimension} can be
%% strengthened by replacing the condition of anchored volume doubling
%% property with a (suitably defined) \emph{anchored
%% polynomial growth property}. This, however, fails with the most
%% natural definition of such a property due to the example of the
%% comb. The comb is a subgraph of $\Z^2$ containing all vertices, all
%% vertical edges (the ``teeth'' of the comb) and the horizontal edges
%% along the axis (the ``frame'' of the comb). It is not difficult to
%see
%% that this graph satisfies the Poincar\'e inequality \PG{} and any
%% reasonable definition of polynomial growth, but has an infinite
%% dimensional space of Lipschitz harmonic functions (we will not prove
%% any of these claims as this will take us too far off-topic).

%While in the context of Cayley graphs, this is equivalent
%to having volume growth and Poincar\'e inequality for any
%scale, it is no longer the case in the random context. In
%fact, uniform Poincar\'e inequality and volume growth do
%not always hold in the random case. In order to understand
%this fact, recall that most of the random environments
%will contain arbitrary graphs microscopically. Hence,
%conditions \VdG{} and \PG{} are natural relaxations of the
%deterministic conditions, and they are satisfied by many
%random environments.

When we apply Theorem~\ref{finite dimension}, the graph $G$ will be
random. Since the dimension depends only on $\CVD$ and $\CP$, then in
particular, if these constants are not random, neither is the
bound. Thus, for example, in supercritical percolation there is
a constant $A$ (depending only on the dimension $d$ and the probability
$p$) such that $\CVD\le A$ and $\CP\le A$ almost surely (the minimal
$n$ is the only quantity which really changes between
configurations). Hence for each $k$ there is a number $D_k$ such that the
dimension of harmonic functions of growth at most of order $\Dd(\rho,x)^k$ is
smaller than $D_k$, almost surely. We discuss a few other examples of
random graphs satisfying $\CVD$ and $\CP$ in the end of Section~\ref
{sec:polynomial growth}, but in general one should keep in mind that
the Poincar\'e inequality restricts the behavior of random walk on the
graph significantly, so Theorem~\ref{finite dimension} applies in much
less generality than Theorem~\ref{no sublinear harmonic}.

%pa1.subsection.subsubsection.4 #&#
\subsection*{Linearly growing functions} In the special case of
environments which are modifications of $\Z^d$, we can compare the
dimension of harmonic functions with a prescribed growth to the
dimension of harmonic functions on $\Z^d$.
The simplest perturbation of $\Z^d$ is the supercritical cluster of
percolation.
We prove the following theorem.

%For $p\in(0,1)$, consider the random graph $G=(V(G),E(G))$ defined by
%$V(G)=V(\Z^d)$ and $E(G)$ being a random set
%containing each edge of $\Z^d$ with probability $p$, independently of
%the other edges.
%It is classical that there exists $p_c(d)\in(0,1)$ such that for
%$p<p_c(d)$,
%there is almost surely
%no infinite connected component (also called cluster), while for
%$p>p_c(d)$,
%there is a unique infinite cluster. When $p>p_c(d)$, we denote this
%cluster by $\omega$.

%th5 #&#
\begin{theorem}\label{dimension linear}
Let $d \geq2$. For $p>p_c(d)$, let $\omega$ be the unique infinite component
of percolation on $\Z^d$.
Then,
the dimension of the vector space of harmonic
functions with growth at most linear on $\omega$ is equal to $d+1$
almost surely.
\end{theorem}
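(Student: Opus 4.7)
The plan is to prove matching bounds $\dim \ge d+1$ and $\dim \le d+1$.

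For the lower bound, I would invoke the existing construction of the corrector on the supercritical cluster (Berger--Biskup, Mathieu--Piatnitski, Sidoravicius--Sznitman, Barlow--Deuschel): for each coordinate $i\in\{1,\dots,d\}$ there is a sublinear function $\chi_i\colon\omega\to\R$ such that $h_i(x):=x_i+\chi_i(x)$ is harmonic on $\omega$. The constant function $\mathbf{1}$ together with $h_1,\dots,h_d$ then gives $d+1$ linearly independent harmonic functions of at most linear growth; independence is immediate because the linear parts $x_1,\dots,x_d$ are linearly independent while the correctors $\chi_i$ are sublinear.

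For the upper bound, the first step is to observe that the supercritical cluster almost surely satisfies \VdG{} and \PG{} (a consequence of Barlow's isoperimetry and volume estimates), so that Theorem \ref{finite dimension} guarantees that the space $\mathcal{H}_1$ of linearly-growing harmonic functions on $\omega$ is finite-dimensional. The heart of the argument is then to build an injective linear map $\Phi\colon\mathcal{H}_1\to\R^{d+1}$, $\Phi(h):=(v(h),h(\rho))$, where $v(h)\in\R^d$ is an asymptotic slope attached to $h$. Once such a slope is defined, the residual $g:=h-\sum_i v_i(h)\,h_i-c$ (with $c$ chosen so that $g(\rho)=0$) is a harmonic function on $\omega$ whose linear parts have cancelled. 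If $g$ can be shown to be sublinear, Theorem \ref{thm:percolation} forces $g\equiv 0$, which gives injectivity of $\Phi$ and hence $\dim\mathcal{H}_1\le d+1$.

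To extract the slope, I would run a martingale central limit theorem. The process $M_n:=h(X_n)-h(\rho)$ is a martingale, and linear growth of $h$ combined with \SDB{} yields $\E[M_n^2]\le Cn$. The quenched invariance principle for the random walk on $\omega$ gives $X_n/\sqrt n\Rightarrow \sigma W_1$ with non-degenerate $\sigma$, and the martingale CLT applied jointly to $(X_n,M_n)/\sqrt n$ produces a Gaussian limit; ergodicity of the environment viewed from the particle then pins this limit down to the form $(\sigma W_1,\langle v(h),\sigma W_1\rangle)$ for a unique $v(h)\in\R^d$, which is the desired slope.

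The main obstacle I expect is upgrading the walk-based statement $g(X_n)/\sqrt n\to 0$ (immediate from the construction of $v(h)$) to the pointwise claim $\max_{x\in\Bb_\rho(n)}|g(x)|=o(n)$ on the cluster, which is what one really needs in order to apply Theorem \ref{thm:percolation}. For this I would combine the linear-growth bound on $g$ (providing uniform integrability and hence $L^2$ convergence along the walk) with the quenched elliptic Harnack inequality on $\omega$ of Barlow to transfer $L^2$-smallness at typical points visited by the walk into sup-norm smallness over macroscopic balls, and use a Borel--Cantelli argument along dyadic scales to convert annealed bounds into almost-sure pointwise decay. Once $g$ is sublinear, Theorem \ref{thm:percolation} forces $g\equiv 0$, $\Phi$ is injective, and the proof closes with $\dim\mathcal{H}_1=d+1$.
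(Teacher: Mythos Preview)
Your overall architecture matches the paper's: lower bound via the corrector, upper bound by subtracting a ``linear-through-corrector'' piece and killing the residual with the entropy/no-sublinear-harmonic result. But the step where you extract the slope $v(h)$ has a genuine gap. You claim the joint Gaussian limit of $(X_n,M_n)/\sqrt n$ has the degenerate form $(\sigma W_1,\langle v,\sigma W_1\rangle)$, with ergodicity ``pinning this down.'' Ergodicity of the environment viewed from the particle makes the limiting covariance matrix \emph{deterministic}, but it does not force that matrix to have rank $d$ rather than $d+1$. The degeneracy you want is precisely the statement that $h$ lies in the span of the corrector functions plus constants---which is what you are trying to prove. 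So this step is circular as written. (Invoking Theorem~\ref{finite dimension} beforehand does not help: knowing $\dim\mathcal H_1<\infty$ says nothing about the rank of the limiting covariance. Incidentally, the paper's proof is independent of Section~\ref{sec:polynomial growth} and does not use finite-dimensionality at all.)

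The paper extracts the slope by a different route that avoids this circularity. It rescales $h$ spatially, $h_n(x)=h(nx)/n$, and proves (Proposition~\ref{continuity}) that the family $(h_n)$ is equicontinuous on compacts; the key input is the entropy bound on $\Delta_\infty(x,y)$ from Section~\ref{sec:entropy} combined with Poincar\'e and the heat-kernel upper bound. A subsequential limit $\tilde h$ on $\R^d$ is then shown to satisfy the mean-value property with respect to Brownian motion (via the quenched invariance principle applied at almost every starting point, transported by Lemma~\ref{good average}), hence is harmonic on $\R^d$ and therefore affine by the classical Liouville theorem. This produces the slope without presupposing any degeneracy.

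Your ``main obstacle''---upgrading the walk-based $g(X_n)/\sqrt n\to 0$ to the pointwise $\sup_{\Bb_\rho(n)}|g|=o(n)$---is also handled differently, and in fact bypassed. The paper strengthens the entropy argument: Lemma~\ref{lem:alln} uses the Gaussian heat-kernel bounds to show $H_n-H_{n-1}\le C/n$ for \emph{every} $n$, and Corollary~\ref{cor:subsequencescales} then shows that $\Ee_\rho[g(X_{n_k})^2]=o(n_k)$ along a single subsequence already forces $g\equiv\textrm{const}$. So walk-based sublinearity along the subsequence coming from the scaling limit suffices; no Harnack or Borel--Cantelli upgrade to pointwise control is needed.
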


This theorem must be understood as a first step toward a bigger goal,
which would be to compute the dimension of all spaces of harmonic
functions with prescribed (polynomial) growth.

The properties of the supercritical percolation cluster used in this
proof are quite general: the
$d$-dimensional volume growth and the Poincar\'e inequality
$(P)_{\omega}$
proved (in stronger form) by Barlow \cite{B04} as well as the
Gaussian bounds which Barlow concludes from these, and an invariance
principle \cite{SS04,BB07,MP07}. All these properties witness the close relation between
macroscopic properties of the supercritical percolation cluster and
$\mathbb R^d$. In some sense, it confirms the heuristic that this cluster
is an approximation of $\Z^d$. %Other random environments share the
%same properties as the supercritical cluster. For those, a result
%similar to Theorem~\ref{dimension linear} can be derived. In Section~\ref{sec:linear growth}, we give several examples of such environments.

%A byproduct of the fact that the infinite cluster of percolation can
%be seen as a stationary random graph is the fact that the so-called
%\emph{corrector} is unique. In \cite{BB07,MP07}, it is shown that for
%every $d\ge2$, there exists $\chi:\mathbb Z^d\times\Omega\rightarrow
%\mathbb R^d$ such that $x\mapsto x+\chi(x,\omega)$ is harmonic on $
%\omega$, and
%\begin{equation}\label{1}\lim_{n\rightarrow\infty} \frac1n\sup_{x\in
%\Bb_\rho(n)}\big|\chi(x)\big| = 0 a.s.\end{equation}
%This function is called the corrector (actually, \eqref{1} was only
%proved in dimension 2, the general statement was derived in
%\cite{BS10}). In \cite{BB07}, it was asked if the corrector was
%unique. We answer affirmatively this question:

%\begin{theorem}\label{unique corrector}
%For almost every environment $\omega$, there exists only one
%vector-valued function $x\mapsto\chi(x,\omega)$ on $\omega$ such that
%$x\mapsto x+\chi(x,\omega)$ is harmonic on $\omega$,
%$\chi(\rho,\omega)=0$ and $\chi(x,\omega)/\Dd(\rho,x)\rightarrow0$ as
%$\Dd(\rho,x)\rightarrow\infty$.\end{theorem}
%\begin{pf}
%The difference between two such functions $\chi_1$ and $\chi_2$ is a
%sublinear harmonic function on $\omega$. Therefore, by theorem
%\ref{thm:percolation} it must be constant. Moreover, it equals 0 at 0,
%implying $\chi_1=\chi_2$.
%\end{pf}

%pa1.subsection.subsubsection.5 #&#
\subsection*{Heat kernel estimates}
Classically \cite{DeG57,Mos61,Mos64,Nas58}, the kernels of symmetric
diffusions are known to have some H\"older
regularity. In random environments, few results
are known on H\"older behavior: Conlon and Naddaf \cite{CN00} and
Delmotte and Deuschel \cite{DD05} treated the case of random
conductance with a uniform ellipticity condition; see also
\cite{GO11}. The entropy techniques developed for the proof of Theorem~\ref{thm:percolation} allow one to give a very short proof that the
space derivative exists. Moreover, it applies in a very general
context. We present the case of percolation.

%th6 #&#
\begin{theorem}\label{heat kernel}
Let $d \geq2$ and $p>p_c(d)$.
Let $\mathbb P_p$ be the measure of the infinite cluster of
percolation (denoted $\omega$) on $\Z^d$. There exist $C_3,C_4>0$ such
that for every $n>0$ and $x,x',y$ at distance less than $n$ of 0,
if $x$ and $x'$ are adjacent,
\begin{eqnarray*}
&&\mathbb E_p \bigl[ \bigl(\mathbf{p}_n(x,y)-
\mathbf{p}_{n-1}\bigl(x',y\bigr) \bigr)^2 {
\mathbf1}_{ \{y \in\omega\}}{\mathbf1}_{\{
x\ \mathrm{and}\ x'\ \mathrm{are\ adjacent\ in\ } \omega\} } \bigr]\\
&&\qquad\le\frac{C_3}{n^{d+1}}\exp
\bigl[-C_4\Dd(x,y)^2/n\bigr],
\end{eqnarray*}
where $\mathbf{p}_n(y,x):= \Pp_y(X_n=x)$ and $X_n$ is the random
walk on
$\omega$.
\end{theorem}

Estimates for the heat kernel itself (i.e., not for the derivative)
are well understood, and are known as Gaussian estimates
$(\mathit{GE})$. Heuristically, Gaussian estimates are bounds of the form
\[
\frac{C_1}{n^{d/2}}\exp \bigl[-C_2\Dd(x,y)^2/n \bigr] \le
\Pp_x[X_n=y] \le\frac{C_3}{n^{d/2}}\exp
\bigl[-C_4\Dd(x,y)^2/n \bigr].
\]
A few caveats are in place, though. The lower bound cannot hold if
there is any kind of periodicity (as in $\Z^d$ or in subgraphs of it,
such as supercritical percolation). One should talk about continuous
time random walk, lazy random walk, or replace $\Pp_x[X_n=y]$ with
$\Pp_x[X_n=y]+\Pp_x[X_{n+1}=y]$. Further, the lower bound does not
hold for $x$ and $y$ extremely far away---if $\Dd(x,y)>n$, then the
probability is just zero (in the simple random walk case).

In the case of the infinite cluster of supercritical percolation,
these bounds were obtained for continuous time random walk in
\cite{Bar04}. They also hold for simple random walk, most of the details
are filled in \cite{BH09}. Again, one should be careful, as (with
small probability) the environment in the neighborhood of $\rho$ might
be atypical, breaking these estimates for small $n$. Hence the
formulation is as follows.
There exist strictly positive constants $C_1$, $C_2$,
$C_3$ and $C_4$ such that for %every $\delta>0$,
almost every
environment $\omega$ there exist random variables $n_x(\omega), x \in
\Z^d$
so that
for every $x,y\in\omega$ and $n>\max\{n_x(\omega),\Dd(x,y)\}$
%
%e1 #&#
\setcounter{equation}{0}
\begin{eqnarray}
\label{a} \frac{C_1}{n^{d/2}}\exp \bigl[-C_2\Dd(x,y)^2/n
\bigr] &\le& \Pp_x[X_n=y] +\Pp_x[X_{n+1}=y]
\nonumber
\\[-8pt]
\\[-8pt]
\nonumber
&\le&\frac{C_3}{n^{d/2}}\exp \bigl[-C_4\Dd(x,y)^2/n \bigr].
\end{eqnarray}
%
% Here $(\tilde X_n)$ denotes the \emph{lazy} random walk on $\omega$,
% which stays at a site with probability 1/2 and moves with
% probability 1/2. The upper bound in the previous inequalities is
% also true for the nonlazy random walk.
Moreover, the
random variables $n_x(\omega)$ satisfy a stretched exponential
estimate, that is,
%
%e2 #&#
\begin{equation}
\label{eq:stretched} \mathbb P_p\bigl(x \in\omega, n_x(
\omega)\ge s\bigr)\le c e^{-c s^\varepsilon}
\end{equation}
for some $\varepsilon>0$. %Note that the need to use the random
%variable $n_0$ is due to the fact that Gaussian bounds might fail
%microscopically (think of the fact that any finite subgraph of $\Z^d$
%can be found in the percolation configuration).

For the proof of Theorem~\ref{heat kernel} we only need the upper
bound in \eqref{a}. For the proof of Theorem~\ref{dimension linear} we
will also need the lower bound, but only in the regime $|x-y|\approx
\sqrt{n}$, that is, in the regime where the probabilities are of order
$n^{-d/2}$.

%pa1.subsection.subsubsection.6 #&#
\subsection*{Organization of the paper}
In the next section, we study the notion of mean entropy of random
walks on a stationary random graph to bound the total variation
between random walks starting at neighbors. We deduce Theorem~\ref{no
sublinear harmonic}. Section~\ref{sec:polynomial growth} contains
the proof that $(\mathit{VD})_G$ and $(P)_G$ imply that the space of harmonic functions
of prescribed polynomial growth is finite dimensional, that is,
 Theorem~\ref{finite dimension}.
Section~\ref{sec:linear growth} deals with the example of the supercritical
percolation cluster and analyzes the space of linearly growing
harmonic functions. It is completely independent of Section~\ref
{sec:polynomial growth}. Section~\ref{sec:heat kernel} contains the
proof of Theorem~\ref{heat kernel}. Section~\ref{sec:open question}
regroups some open questions.

%pa1.subsection.subsubsection.7 #&#
\subsection*{Notation}\label{pg:notation} To make the distinction
between the reversible and nonreversible case clear, we call the
general case ``Markov chain'' and denote it by $(P,V)$, where $V$ is the
space and $P\dvtx V\times V\to[0,1]$ are the transition probabilities,
$P(x,y)$ being the probability to move from $x$ to $y$. We often
write $P^n$ which we interpret as a matrix power---of course,
$P^n(x,y)$ is also the probability that a random walk starting from
$x$ will be at $y$ after $n$ steps.

Any reversible chain can be described as a random walk on a weighted
graph. If $G$ is a graph and $\nu$ is a function on the edges of $G$ taking
values in $[0,\infty)$, then the Markov chain is given by
$P(x,y)=\nu(x,y)/\sum_z\nu(x,z)$. Here and below, $\nu(x,y)$ for two
vertices $x$ and
$y$ is the weight of the edge $(x,y)$. In particular,
$\nu(x,y)=\nu(y,x)$, and if $(x,y)$ is not an edge of the graph,
then we set $\nu(x,y)=0$. We will always denote reversible Markov
chains by $(G,\nu)$.
%A special case is the reversible case, usually called a ``weighted
%graph''
%and denoted by $(G,\nu)$, where
%$G$ is a graph and $\nu$ are weights on the edges
%\emph{i.e.} $\nu:E(G)\to[0,\infty)$.
%$\nu$ is a symmetric function on supported on the edges of $G$.
We denote by $E(G)$ the
set of edges of the graph $G$, and for a set of vertices $S$ we
denote by $E(S)$ the set of edges between the vertices of $S$. The
notation $x\sim y$ for two vertices will mean that $(x,y)\in E(G)$,
that is, that they are
neighbors in the graph.

We also consider $\nu$ as a
measure. For a vertex $x$, we will denote $\nu(x)=\sum_{y\sim x}\nu(x,y)$
while for a set of vertices $S$, we will denote $\nu(S)=\sum_{x\in
S}\nu(x)$.
Note that edges between two vertices of $S$ are counted
twice in this sum.

% The corresponding Markov chain in this case is given by the
%transition probabilities
%$P(x,y) = \frac{ \nu(x,y) }{\nu(x) }$ where $y \sim x$.

For a fixed graph or Markov chain we denote
by $\Ee$ the expectation with respect to the random walk on that fixed
graph. When the starting point of the random walk is specified, we
will use subscripts and write for instance $\Ee_\rho$. The symbol $\E$
is used to denote the expectation with
respect to both the environment and the random walk (the ``annealed''
average). Similarly, bold letters will usually denote ``quenched''
objects, that is, objects related to an instance $G$ of the
environment. The quantity $\Dd(x,y)$
will denote the graphical distance between two vertices $x$ and $y$ of
$G$, that is, the length of the shortest path in $G$ between $x$ and $y$,
or, in the nonreversible setting, the minimal $n$ such that
$P^n(x,y)>0$. The ball
$\{y\dvtx \Dd(x,y)\le r\}$ will be denoted by $\Bb_x(r)$.

Constants which depend on the
environments $G$ are denoted $\mathbf{c}_i$, while constants of the form
$C_i$ will refer to constants uniform in
the environment. We will occasionally write $\mathbf{c}$ or $C$ for a
constant---different appearances of $\mathbf{c}$ or $C$ might be different
constants.

The cardinality of a set $E$ will be denoted by $|E|$.

%s2 #&#
\section{The entropy argument}\label{sec:entropy}

The connection between entropy and random walks was first exhibited by Avez
\cite{Av} and then made famous in a celebrated paper of Kaimanovich
and Vershik \cite{KV}; see also Derriennic \cite{D80}. For any
discrete variable
$X$ the entropy is defined by
\[
H(X)=\sum_x\phi\bigl(P(X=x)\bigr)\qquad \mbox{where }
\phi(0)=0\mbox{ and }\phi (t)=-t\log t\mbox{ for any }t>0.
\]
Conditional entropy can be defined by
\[
H(X|Y) = \E\bigl[ H(X | Y=y) \bigr] = \sum_y
P(Y=y) \sum_x \phi\bigl( P(X=x | Y=y) \bigr).
\]
It is then quite simple to show that $ H(X|Y) = H(X,Y) - H(Y) $
and
that
$H(X|Y,Z) \leq H(X|Y)$ for any three random variables $X$, $Y$ and $Z$.

Consider a stationary environment $(P,V,\rho)$ with law $\mathbb P$.
Conditionally on $(P,V,\rho)$, define the \emph{entropy} of the random
walk at times $n,m$
started at $\rho$ by
\[
\Hh_{n,m}(P,V,\rho)=H(X_n,X_m)=\sum
_{x,y\in V}\phi \bigl(\Pp_\rho (X_n=x,X_m=y)
\bigr).
\]
When $n=m$, we simply denote $\Hh_{n,n}(P,V,\rho)$ by $\Hh
_n(P,V,\rho
)$. In the random context, we define the \emph{mean entropy} (see
\cite
{BC10}) by
\[
H_{n,m}=\mathbb E\bigl[\Hh_{n,m}(P,V,\rho)\bigr] \quad\mbox{and}\quad
H_n=\mathbb E\bigl[\Hh _n(P,V,\rho)\bigr].
\]

There are many ways of measuring the distance between two probability
measures $\mu$ and $\nu$ on some set $V$, the most standard one being
the total variation
\[
\Vert \mu-\nu\Vert _{\mathrm{TV}}:= \frac{1}2\sum
_{x\in V}\bigl|\mu(x)-\nu(x)\bigr|.
\]
In this article, we will use a less standard one. Define $\Delta(\mu,\nu
)$ by the formula
%
%e3 #&#
\begin{equation}
\label{eq:defDelta}\Delta(\mu,\nu):= \biggl[\sum_{x\in
V}
\frac{ (\mu(x)-\nu(x) )^2}{\mu(x)+\nu(x)} \biggr]^{1/2}.
\end{equation}
Estimating the distance using $\Delta$ is stronger than via the total
variation: by Cauchy--Schwarz,
%
%e4 #&#
\begin{eqnarray}\label{eq:TVDelta}
2 \|\mu-\nu\|_{\mathrm{TV}}&=&\sum_{x\in V}\bigl|\mu(x)-
\nu(x)\bigr|=\sum_{x\in
V}\sqrt{\mu (x)+\nu(x)}
\frac{|\mu(x)-\nu(x)|}{\sqrt{\mu(x)+\nu(x)}}
\nonumber
\\
&\le&\sqrt{ \biggl(\sum_{x\in V}\mu(x)+\nu(x) \biggr)
\biggl(\sum_{x\in
V}\frac{(\mu(x)-\nu(x))^2}{\mu(x)+\nu(x)} \biggr)}\\
&=&\sqrt{2}
\Delta (\mu,\nu ).\nonumber
\end{eqnarray}
This quantity has an advantage compared to the total variation: for
any $f\dvtx G\rightarrow\mathbb R$, we have (using Cauchy--Schwarz similarly)
%
%e5 #&#
\begin{equation}
\label{important}\bigl |\mu(f)-\nu(f) \bigr|\le\Delta (\mu,\nu) \bigl(\mu
\bigl(f^2\bigr)+\nu\bigl(f^2\bigr) \bigr)^{1/2}.
\end{equation}
With the total variation, one would obtain a similar but weaker
inequality with the $L^\infty$-norm in place of the $L^2$-norm (the
former can in principle be much larger than the later). The reasons for
using $\Delta$ (rather than, say,
the total variation distance) will be discussed in more detail on page
\pageref{pg:Csiszar}, but most
readers would be better served by reading the paper linearly, that is,
first see how $\Delta$ is used to prove Theorem~\ref{no sublinear harmonic} and only then take a look at this discussion.

Let us introduce a convenient notation, used only in this section. Let
$\law(Z)$ denote the law of
a random variable $Z$, that is, the measure on the space of values of
$Z$ induced by it. If
$\mathcal E$ is some event, then we will denote by $\law(Z|{\mathcal
E})$ the law of $Z$
conditioned on $\mathcal E$ happening.

With this notation, we are now in a position to state an important
lemma, which is a quantitative version of the following well-known
fact: for any two random variables $X$ and $Y$,
$H(X,Y)\le H(X) + H(Y)$ with equality holding if and only if $X$ and
$Y$ are
independent.

%le7 #&#
\begin{lemma}\label{lem:XY}For any two random variables $X$ and $Y$,
%
%e6 #&#
\begin{equation}\quad
\label{eq:XY} \sum_yP(Y=y)\Delta^2
\bigl(\law(X),\law(X|Y=y)\bigr) \le2 \bigl(H(X)+H(Y)-H(X,Y) \bigr).
\end{equation}
\end{lemma}

%(recall that $\law(X)$ is the law of the variable $X$ i.e.\ the
%measure induced on its set of values)

\begin{pf}We first note that for $t > 0$,
%
%e7 #&#
\begin{equation}
\label{eq:convex} 2t\log t \ge\frac{(t-1)^2}{t+1}+2t-2
\end{equation}
%
%(one may verify the first inequality easily by noting that both sides
%are equal at 1 and the derivative of the left side is smaller than
%that of the right side).
[this can be seen by Taylor expanding $t \log t$ to the second order
at $1$, which gives that $t \log t = t-1 + \frac{(t-1)^2}{2 t^*}$ for
some $t^*$ in the interval between
$t$ and $1$, so $t^* \leq t+1$].
Denote
\[
p(x)=P(X=x),\qquad  p(y)=P(Y=y),\qquad p(x,y)=P(X=x,Y=y).
\]
Then, the left-hand side of \eqref{eq:XY} is [recall the definition
(\ref{eq:defDelta}) of $\Delta$]
\begin{eqnarray*}
\mbox{LHS}&=&\sum_yp(y)\sum
_x\frac{ (p(x,y)/p(y)-p(x)
)^2}{p(x,y)/p(y)+p(x)}\\
&=&\sum_{y,x}p(x)p(y)
\biggl(\frac{(
{p(x,y)}/{(p(x)p(y))}-1)^2}{{p(x,y)}/{(p(x)p(y))}+1}+
\smash {\underbrace {2\frac{p(x,y)}{p(x)p(y)}-2}_0}
\biggr),\phantom{\underbrace{
\frac{p}{p}}}
\end{eqnarray*}
where we were allowed to add the expression denoted by
\raisebox{8pt}[\height][8pt]{$\underbrace{}_0$} since summing over $x$
and $y$ makes these terms
cancel out (they are both equal to $2$). Using (\ref{eq:convex}) this gives
\begin{eqnarray*}
\mbox{LHS}&\stackrel{\scriptsize{(\ref{eq:convex})}} {\le}& 2\sum
_{x,y}p(x)p(y) \biggl(\frac{p(x,y)}{p(x)p(y)}\log\frac
{p(x,y)}{p(x)p(y)}
\biggr)
\\
&=& 2\sum_{x,y}p(x,y) \bigl(\log p(x,y)-\log p(x)-\log
p(y) \bigr)\\
&=&2\bigl(-H(X,Y)+H(X)+H(Y)\bigr),
\end{eqnarray*}
where in the last equality we used that $\sum_yp(x,y)=p(x)$ and $\sum_xp(x,y)=p(y)$.
\end{pf}

We will always be interested in the particular case of random walks. In
order to lighten the notation, we set
%
%e8 #&#
\begin{eqnarray}\label{eq:defDeltan}
\Delta_n(x,y)\dvtx &=&\Delta \bigl(\law(X_n|X_0=x),
\law(X_{n-1}|X_0=y) \bigr)
\nonumber
\\[-8pt]
\\[-8pt]
\nonumber
%\intertext{or equivalently, by Markov's property}
&=&\Delta \bigl(\law(X_n|X_0=x),
\law(X_{n}|X_1=y) \bigr),
\nonumber
\end{eqnarray}
the last equality following by the Markov property [recall that $\law
(X|{\mathcal E})$ denotes the law of $X$ conditioned
on $\mathcal E$].
Note that the second measure is the law of the random walk after $n-1$
steps, so the definition is not symmetric in $x$ and $y$.
%Here and below $\law(X)$ stands for the law of $X$ \emph{i.e.} the
%measure induced by $X$ on its value space.

Lemma~\ref{lem:XY} is used to proved the following theorem.

%th8 #&#
\begin{theorem}\label{entropy}
Let $(P,V,\rho)$ be a stationary environment. For every $n>0$, we have
%
%e9 #&#
\begin{equation}
\label{main inequality} \mathbb E \bigl(\Delta_n(\rho,X_1)^2
\bigr)\le2(H_{n}-H_{n-1})
\end{equation}
(as usual $\E$ is over both the environment and the
randomness of $X_1$).
\end{theorem}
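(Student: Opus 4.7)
The plan is to reduce the claim, via stationarity, to a pointwise comparison between $\Delta$ and Kullback--Leibler divergence. Throughout, fix the random environment and let $\mu := \law(X_n \mid X_0 = \rho)$ and $\mu_y := \law(X_n \mid X_0 = \rho,\, X_1 = y)$ be the corresponding quenched laws. By the Markov property in this environment, $\mu_y = \law(X_{n-1}\mid X_0 = y)$, so
\[
\E\bigl(\Delta_n(\rho,X_1)^2\bigr) \;=\; \E\!\left[\sum_{y} P(\rho,y)\,\Delta(\mu,\mu_y)^2\right].
\]

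Next, I would use stationarity to rewrite $H_{n-1}$. Since $(P,V,\rho)$ and $(P,V,X_1)$ have the same law, the expected Shannon entropy of the $(n-1)$-step walk from the re-rooted vertex $X_1$ equals $H_{n-1}$; unpacking gives
\[
H_{n-1} \;=\; \E\!\left[\sum_{y} P(\rho,y)\,H(\mu_y)\right],
\]
where $H(\cdot)$ denotes the Shannon entropy of a discrete measure. Combining with $H_n = \E[H(\mu)]$,
\[
H_n - H_{n-1} \;=\; \E\!\left[H(\mu) - \sum_{y} P(\rho,y)\,H(\mu_y)\right] \;=\; \E\!\left[\sum_{y} P(\rho,y)\,D(\mu_y \,\|\, \mu)\right],
\]
where the final equality is the elementary identity $H(\sum_y p_y\mu_y) - \sum_y p_y H(\mu_y) = \sum_y p_y D(\mu_y \,\|\, \sum_y p_y\mu_y)$, i.e.\ mutual information expressed as an average KL divergence.

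The theorem therefore reduces to the pointwise functional inequality
\[
\Delta(\nu_0,\nu_1)^2 \;\le\; 2\,D(\nu_1\,\|\,\nu_0)
\]
for arbitrary probability measures $\nu_0,\nu_1$ on $V$; once established, one applies it with $\nu_0 = \mu$, $\nu_1 = \mu_y$, sums against $P(\rho,y)$, and takes expectation over the environment. I would prove it through the squared Hellinger distance $\sum_x(\sqrt{\nu_0(x)}-\sqrt{\nu_1(x)})^2$ in two elementary steps. First, $(\sqrt{a}+\sqrt{b})^2 \le 2(a+b)$ gives
\[
(\sqrt{a}-\sqrt{b})^{2} \;=\; \frac{(a-b)^2}{(\sqrt{a}+\sqrt{b})^{2}} \;\ge\; \frac{(a-b)^2}{2(a+b)},
\]
so summing over $x$ yields $\sum_x(\sqrt{\nu_0(x)}-\sqrt{\nu_1(x)})^2 \ge \tfrac12 \Delta(\nu_0,\nu_1)^2$. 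Second, applying $\log t \le t - 1$ with $t = \sqrt{\nu_0/\nu_1}$ and summing against $\nu_1$ gives the classical Hellinger--KL bound $\sum_x(\sqrt{\nu_0(x)}-\sqrt{\nu_1(x)})^2 \le D(\nu_1\,\|\,\nu_0)$. Chaining the two yields $\Delta^2 \le 2D$.

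I do not expect significant obstacles. The only delicate point is the use of stationarity to identify $H_{n-1}$ with the mean quenched conditional entropy of $X_n$ given $X_1$, which is essentially a definition chase; once it is in place, the remaining content is a standard Hellinger/KL comparison applied term by term.
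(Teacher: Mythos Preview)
Your proof is correct and follows essentially the same strategy as the paper: use stationarity to identify $H_{n-1}$ with the mean conditional entropy of $X_n$ given $X_1$, then bound $\Delta^2$ by the resulting mutual information. The only difference is in how the functional inequality is established. The paper states it in averaged form (their Lemma bounding $\sum_y \Pp(Y=y)\Delta^2(\law(X),\law(X|Y=y))$ by $2(H(X)+H(Y)-H(X,Y))$) and proves it via the single calculus inequality $2t\log t \ge \tfrac{(t-1)^2}{t+1}+2t-2$ applied with $t=p(x,y)/p(x)p(y)$. You instead prove the pointwise inequality $\Delta(\nu_0,\nu_1)^2\le 2D(\nu_1\|\nu_0)$ by factoring through squared Hellinger distance. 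Your route is arguably more transparent, since both steps ($\Delta^2\le 2H^2$ and $H^2\le D$) are textbook; the paper's one-line inequality is equivalent but less recognizable. Either way the content is the same.
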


Before proving Theorem~\ref{entropy}, we state a result from
\cite{BC10} concerning $H_{1,n}$. We isolate it from the rest of the
proof because it is the only place where stationarity is used
(stationarity replaces transitivity as used in the context of groups).

%le9 #&#
\begin{lemma}\label{increments entropy}
Let $(P,V,\rho)$ be a stationary environment. For every $n>0$, we have
$H_{1,n}=H_{n-1}+H_1$.
\end{lemma}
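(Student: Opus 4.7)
The identity is a standard chain rule plus an application of stationarity, and the strategy is essentially forced by the definitions. My plan is to work on the quenched level first, apply the chain rule for Shannon entropy to the pair $(X_1,X_n)$, recognize the conditional entropy via the Markov property, and then integrate out the environment using stationarity.

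\textbf{Step 1: quenched chain rule.} Fix a realization $(P,V,\rho)$. By the chain rule for discrete entropy,
\[
\Hh_1^n(P,V,\rho) \;=\; H(X_1,X_n) \;=\; H(X_1) \;+\; H(X_n\mid X_1),
\]
where of course $H(X_1)=\Hh_1(P,V,\rho)$. Now condition on $X_1=y$: by the Markov property,
\[
\law(X_n\mid X_0=\rho,\, X_1=y) \;=\; \law(X_{n-1}\mid X_0=y),
\]
so $H(X_n\mid X_1=y)$ is exactly $\Hh_{n-1}(P,V,y)$. Summing over $y$ with weights $\Pp_\rho(X_1=y)$ gives
\[
\Hh_1^n(P,V,\rho) \;=\; \Hh_1(P,V,\rho) \;+\; \sum_{y\in V}\Pp_\rho(X_1=y)\,\Hh_{n-1}(P,V,y).
\]

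\textbf{Step 2: annealed averaging via stationarity.} Taking $\mathbb E$ (over the environment), the first term yields $H_1$ and one is left with
\[
H_1^n \;=\; H_1 \;+\; \mathbb E\!\left[\sum_{y}\Pp_\rho(X_1=y)\,\Hh_{n-1}(P,V,y)\right].
\]
The bracketed quantity is precisely $\mathbb E\!\left[\Hh_{n-1}(P,V,X_1)\right]$, i.e.\ the expectation of $\Hh_{n-1}$ evaluated at the re-rooted environment $(P,V,X_1)$. By the definition of a stationary environment, the law of $(P,V,X_1)$ equals the law of $(P,V,\rho)$, hence
\[
\mathbb E\!\left[\Hh_{n-1}(P,V,X_1)\right] \;=\; \mathbb E\!\left[\Hh_{n-1}(P,V,\rho)\right] \;=\; H_{n-1},
\]
and the lemma follows.

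\textbf{Where the difficulty lies.} There is no real computational obstacle here; the only delicate point is being careful that the expression $\Hh_{n-1}(P,V,X_1)$ is a measurable functional of the rooted chain $(P,V,X_1)$, so that the equality in law of rooted chains (up to isomorphism) genuinely transports the expectation to $H_{n-1}$. Since $\Hh_{n-1}$ is defined from the intrinsic transition probabilities $P^{n-1}(\rho,\cdot)$, it is invariant under the isomorphism of rooted Markov chains used in the definition of ``same law,'' so this step is legitimate. This is the only place stationarity enters, which is why the lemma is isolated from the rest of the proof of Theorem~\ref{entropy}.
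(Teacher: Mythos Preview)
Your proof is correct and follows essentially the same approach as the paper: chain rule for entropy plus Markov property at the quenched level, then stationarity to identify $\mathbb E[\Hh_{n-1}(P,V,X_1)]$ with $H_{n-1}$. Your remark that $\Hh_{n-1}$ is invariant under rooted-chain isomorphism is exactly the justification the paper spells out for the last equality.
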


\begin{pf}
Fix $n>0$. %Invoking the definition of the mean entropy, we can write
%We first use the conditional entropy formula $H(X,Y)=H(X|Y)+H(Y)$. In
%our context this is simply
A simple computation leads to
\begin{eqnarray*}
&&\Hh_{1,n}(P,V,\rho)\\
&&\qquad=\sum_{x\sim\rho,y\in G}\phi \bigl(
\Pp_\rho(X_1=x,X_n=y) \bigr)
\\
&&\qquad=\sum_{x\sim\rho}\Pp_\rho(X_1=x)
\sum_{y\in G}\phi \bigl(\Pp _\rho(X_n=y
| X_1=x) \bigr)+\sum_{x\sim\rho}\phi \bigl(
\Pp_\rho(X_1=x) \bigr),
\end{eqnarray*}
which we simplify using the Markov property giving
\[
\Pp_\rho(X_n=y | X_1=x)=\Pp_x(X_{n-1}=y).
\]
Taking the expectation with respect to the environment we obtain
\begin{eqnarray*}
H_{1,n}&=&%\mathbb E\left[\sum_{y\sim\rho}\Pp_
%\rho(X_1=y)\sum_{z\in G}\phi\big(\Pp_\rho(X_n=z|X_1=y)\big) - \sum_{z
%\in G}\phi\big(\Pp_\rho(X_n=z)\big)\right]\\& =
\mathbb E \biggl[\sum
_{x\sim\rho}\Pp_\rho(X_1=x)\sum
_{y\in G}\phi \bigl(\Pp _x(X_{n-1}=y) \bigr)
\biggr]+\mathbb E \biggl[\sum_{x\sim\rho}\phi \bigl(\Pp
_\rho(X_1=x) \bigr) \biggr]
\\
&=&\mathbb E \bigl[\Hh_{n-1}(P,V,X_1) \bigr]+\mathbb E
\bigl[\Hh _1(P,V,\rho ) \bigr]=H_{n-1}+H_1,
\end{eqnarray*}
where in the last equality we used the fact that $(P,V,X_1)$ has the
same law as $(P,V,\rho)$ (this is not a property of entropy, it
would hold for any function of the environment).
%--- formally, we use the coupling that makes
%them isomorphic with probability 1, and since a Markov chain
%isomorphism preserves the entropy, the expected entropy of the two
%environments $(P,V,X_1)$ and $(P,V,\rho)$ must be equal (this is not a
%property of entropy, it would be true for any function of the
%environment).
\end{pf}

Before continuing, let us state one corollary of Lemma~\ref{increments
entropy} which is
not necessary for the proof of Theorem~\ref{entropy} but does shed
some light on the quantities involved.

%
%co10 #&#
\begin{corollary}\label{cor:decreasing} $H_n-H_{n-1}$ is decreasing.
\end{corollary}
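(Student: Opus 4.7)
The plan is to prove the equivalent concavity inequality $H_{n+1}-H_n\le H_n-H_{n-1}$ by rewriting the second difference in terms of a quenched conditional entropy. Applying Lemma \ref{increments entropy} at two successive times yields $H_1^{n+1}-H_1^{n}=H_n-H_{n-1}$, so the required inequality is equivalent to
\[
H_{n+1}-H_1^{n+1}\;\le\; H_n-H_1^{n}.
\]
Now for a fixed environment $(P,V,\rho)$ the chain-rule identity $\Hh_1^m(P,V,\rho)=\Hh_m(P,V,\rho)+\Hh(X_1\mid X_m)$ (where $\Hh(X_1\mid X_m)$ denotes the quenched conditional entropy of $X_1$ given $X_m$) holds verbatim, exactly as used in the proof of Lemma \ref{increments entropy}. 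Taking expectations over the environment, the inequality becomes
\[
\mathbb E\bigl[\Hh(X_1\mid X_n)\bigr]\;\le\;\mathbb E\bigl[\Hh(X_1\mid X_{n+1})\bigr].
\]

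To establish this last inequality I would work quenched. Fix $(P,V,\rho)$; then by the Markov property of the walk in a fixed environment, $X_1\to X_n\to X_{n+1}$ is a Markov chain, so conditioning on $X_n$ makes $X_1$ independent of $X_{n+1}$, giving $\Hh(X_1\mid X_n,X_{n+1})=\Hh(X_1\mid X_n)$. The general information-theoretic fact that additional conditioning never increases entropy then yields
\[
\Hh(X_1\mid X_n)\;=\;\Hh(X_1\mid X_n,X_{n+1})\;\le\;\Hh(X_1\mid X_{n+1}),
\]
and averaging over the environment finishes the argument. I do not expect any genuine obstacle: stationarity has already been absorbed into Lemma \ref{increments entropy}, and what remains is a single pointwise-in-environment application of ``extra conditioning reduces entropy'' to the Markov triple $(X_1,X_n,X_{n+1})$. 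The only bookkeeping is integrability of the quenched conditional entropies, which follows from the trivial bound $\Hh(X_1\mid X_m)\le \Hh_1(P,V,\rho)$ together with the implicit finiteness of $H_1$ that underlies the whole discussion.
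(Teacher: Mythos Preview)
Your proof is correct and follows essentially the same route as the paper: both reduce, via Lemma~\ref{increments entropy}, to showing that the quenched conditional entropy $\Hh(X_1\mid X_n)$ is nondecreasing in $n$, and both establish this by the Markov identity $\Hh(X_1\mid X_n,X_{n+1})=\Hh(X_1\mid X_n)$ together with the fact that removing conditioning can only increase entropy. The only cosmetic difference is that the paper writes $H_n-H_{n-1}=-\mathbb E[\Hh(X_1\mid X_n)]+H_1$ directly from a single application of Lemma~\ref{increments entropy}, whereas you apply the lemma at two consecutive times before arriving at the same comparison.
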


In other words, the sequence $H_n$ is concave.

\begin{pf} By Lemma~\ref{increments entropy},
\[
H_n-H_{n-1}=H_n-H_{1,n}+H_1=
\E[\Hh_n-\Hh_{1,n}]+H_1.
\]
The quantity $\Hh_n-\Hh_{1,n}$ can be written as the conditioned entropy
$-H(X_1|X_n)$ where $X_n$ is the random walk at time $n$ (this
statement is quenched). This,
however, increases since
%
%e10 #&#
\begin{equation}
\label{eq:monot} \Hh(X_1|X_n)=\Hh(X_1|X_n,X_{n+1})
\le\Hh(X_1|X_{n+1}),
\end{equation}
where the equality is due to the fact that conditioned on $X_n$,
knowing $X_{n+1}$ gives you no information about what happened before
time $n$; that is, by the Markov property, conditional on $X_n$ we have
that $X_1$ is independent
of $X_{n+1}$.
The inequality in (\ref{eq:monot}) is a generic fact about
entropy---conditioning on more information reduces the relative
entropy [namely, $H(X|Y,Z) \leq H(X|Y)$ for any three random variables
$X$, $Y$ and $Z$]. % \eqref{eqn:H of X cond on Y,Z}.
Hence $\Hh_n-\Hh_{1,n}$ decreases, and so does its expectation.
\end{pf}

\begin{pf*}{Proof of Theorem~\ref{entropy}}
This is a direct corollary of Lemmas \ref{increments entropy} and
\ref{lem:XY}. Indeed, by Lemma~\ref{lem:XY},
\begin{eqnarray*}
\Ee \bigl(\Delta_n(\rho,X_1)^2 \bigr)&=&\sum
_x\Pp(X_1=x)\Delta\bigl(\law
(X_n),\law (X_n|X_1=x)\bigr)^2
\\
&\le&2(\Hh_1+\Hh_n-\Hh_{1,n}).
\end{eqnarray*}
We now take expectation with
respect to the environment and get from Lemma~\ref{increments entropy} that
\[
\E \bigl(\Delta_n(\rho,X_1)^2 \bigr) \le2\E(
\Hh_1+\Hh_n-\Hh _{1,n}) = 2(H_n-H_{n-1}).%\qedhere
\]
\upqed\end{pf*}

%\begin{corollary}
%Let $(G,\nu,\rho)$ be a stationary random graph with uniform
%polynomial growth. There exists $C>0$ such that
%\begin{equation}\mathbb E\big(\liminf_{n\rightarrow\infty} n\cdot
%\left(\Pp_\rho(\rho,X_1)\Delta_G^n(\rho, X_1)\right)^2\big) \le  C.
%\end{equation}
%\end{corollary}

We are now in a position to prove Theorem~\ref{no sublinear harmonic}.

\begin{pf*}{Proof of Theorem~\ref{no sublinear harmonic}}
We only need to prove that for almost every environment, $h(\rho
)=h(X_1)$ a.s., for
any sublinear harmonic function. Indeed, stationarity would then imply
that for
almost every $P$, $h(X_n)=h(X_{n+1})$ a.s. for any sublinear harmonic
function. Since the Markov chain is irreducible, $(X_n)$ can visit any
vertex, and we deduce that almost surely any sublinear harmonic
function is constant.

For any harmonic function $h$ with respect to the environment, we have
for all $x$ and $n$,
\[
h(x)=\Ee_x\bigl(h(X_n)\bigr). % h(X_1)=\Ee_{X_1}(h(X_{n-1})).
\]
We use this twice, once for $x=\rho$ and once for an arbitrary $x$ and
$n-1$. We get
\begin{eqnarray*}
\bigl|h(\rho)-h(x)\bigr |& =&\bigl|\Ee_\rho\bigl[h(X_n)\bigr]-
\Ee_{x}\bigl[h(X_{n-1})\bigr] \bigr|
\\
\mbox{by } (\ref{important})&\le&\Delta_n(\rho,x)\sqrt{
\Ee_\rho\bigl[h^2(X_n)\bigr]+\Ee_{x}
\bigl[h^2(X_{n-1})\bigr]}.
\end{eqnarray*}
We use this with $x=X_1$, integrate over $X_1$ and get
%Since $\Ee_x [ h^2(X_{n-1}) ] = \E_\rho[ h^2(X_n) \ | \ X_1=x ]$,
%taking expectation over $X_1$ gives
%
%e11 #&#
\begin{eqnarray}
\label{crucial inequality} \Ee_\rho \bigl|h(\rho)-h(X_1) \bigr| &\le&
\Ee_\rho \Bigl[\Delta_n(\rho,X_1)\sqrt{
\Ee_\rho\bigl[h^2(X_n)\bigr]+\Ee
_{X_1}\bigl[h^2(X_{n-1})\bigr]} \Bigr]
\nonumber
\\[-8pt]
\\[-8pt]
\nonumber
\mbox{by Cauchy--Schwarz} &\le& \sqrt{2\Ee_\rho \bigl[
\Delta_n(\rho,X_1)^2 \bigr]\Ee_\rho
\bigl[h^2(X_n)\bigr]},
\end{eqnarray}
where in the last line we also used that $\Ee_\rho[\Ee
_{X_1}[h^2(X_{n-1})]]=\Ee_\rho[h^2(X_n)]$.

By assumption, the Markov chain has annealed polynomial growth.
Therefore, the entropy satisfies
\[
H_n\leq\mathbb E\bigl[\log\bigl|\Bb_\rho(n)\bigr|\bigr]\leq\log
\mathbb E\bigl[\bigl|\Bb_\rho (n)\bigr|\bigr]\le\log\bigl[Cn^d\bigr]
\]
and is at most logarithmic (we used the fact that $\log$ is concave). Hence
$H_n-H_{n-1}\le c/n$ for infinitely many $n$. Using Theorem~\ref
{entropy} and (\ref{pg:SDB}) we get
\[
\E \bigl[n \Delta_n(\rho,X_1)^2 \bigr] + \E
\bigl[ n^{-1} \Dd(X_n,\rho )^2\bigr] \leq C\qquad
\mbox{for infinitely many $n$.}
\]
Hence, by Fatou's lemma, for almost every environment there exists
$\mathbf{c}
_1<\infty$ such that
%
%e12 #&#
\begin{equation}\qquad
\label{bound coupling} \Ee_\rho\bigl[n \Delta_n(
\rho,X_1)^2\bigr] + \Ee_\rho\bigl[
n^{-1} \Dd(X_n,\rho)^2\bigr]\leq
\mathbf{c}_1\qquad \mbox{for infinitely many $n$,}
\end{equation}
where this time the sequence of $n$ for
which it holds depends on the environment, that is, is random.

Now, assume that $h$ has sublinear growth. For any $\varepsilon>0$,
there exists a constant ${\mathbf K}$ such that for all $x\in V$,
%
%e13 #&#
\begin{equation}
\label{bound h}h^2(x)\leq\varepsilon\Dd(x,\rho )^2+{
\mathbf K}.
\end{equation}

%Third, the random walk satisfies \SDB{}, implying that there exists $
%\cc_2(P)<\infty$ (not depending on $\e$) such that for every $n>0$,
%\begin{equation}\label{bound speed}\Ee_\rho[\Dd(X_n,\rho)^2]\leq
%\cc_2n.\end{equation}
Putting \eqref{bound h} and \eqref{bound coupling}
%and \eqref{bound speed}
in \eqref{crucial inequality}, we deduce that for almost every
environment, and for every $h$ harmonic and sublinear on it,
% for an infinite number of $n$,
%\begin{equation}\label{22}\Ee_\rho[h^2(X_n)] \le \varepsilon\Ee_\rho[
%\Dd(X_n,\rho)^2]+K \le \varepsilon C_2n+K.\end{equation}
%Putting \eqref{21} and \eqref{22} together, we obtain when $n$ goes to
%infinity that
%
\[
\Ee_\rho\bigl(\bigl|h(\rho)-h(X_1)\bigr|\bigr)\le
\mathbf{c}_2 \varepsilon^{1/2}.
\]
Letting $\varepsilon$ go to 0, we deduce that $h(\rho)=h(X_1)$ almost
surely for any sublinear harmonic function.
\end{pf*}

Inequality \eqref{crucial inequality} relates the entropy to the value
of possible harmonic functions at~$X_n$. Its use is not restricted to
the case of diffusive environments with polynomial growth. For
instance, one can use this inequality to prove a characterization of
almost sure Liouville property for stationary random graphs (this was
proved in~\cite{BC10} using a more direct generalization of~\cite{KV}).
For completeness, we state the result in \cite{BC10} here.

%co11 #&#
\begin{corollary}[(\cite{BC10})]
Let $(P,V,\rho)$ be a stationary environment. If $H_n/n$ converges to
0, then $P$ has the Liouville property (i.e., has no nonconstant
bounded harmonic functions) almost surely.
\end{corollary}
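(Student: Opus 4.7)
The plan is to directly combine Theorem~\ref{entropy} with the key inequality \eqref{crucial inequality} that was already established, replacing the sublinear growth assumption by the bounded assumption on $h$, which simplifies the argument considerably.

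First I would deduce from the hypothesis $H_n/n\to 0$ that in fact $H_n-H_{n-1}\to 0$. By Corollary~\ref{cor:decreasing}, the sequence $a_n:=H_n-H_{n-1}$ is nonnegative and decreasing, and $H_n/n$ is the Cesaro average of $(a_k)$. A monotone sequence whose Cesaro averages tend to zero must itself tend to zero (since $a_n\le \frac{1}{n}\sum_{k\le n} a_k$ would fail in the wrong direction, but by monotonicity the limit of $a_n$ coincides with the limit of the averages). Applying Theorem~\ref{entropy} then gives
$$\mathbb E\bigl[\Delta_n(\rho,X_1)^2\bigr]\ \le\ 2(H_n-H_{n-1})\ \longrightarrow\ 0.$$

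Next, because the annealed quantity tends to zero, one can pass to a \emph{deterministic} subsequence $n_k\to\infty$ along which the quenched quantity $\mathbf E_\rho[\Delta_{n_k}(\rho,X_1)^2]$ tends to zero for almost every environment. Indeed, $A_n:=\mathbf E_\rho[\Delta_n(\rho,X_1)^2]$ is a nonnegative random variable with $\mathbb E[A_n]\to 0$, hence $A_n\to 0$ in probability, and a standard extraction produces a subsequence along which the convergence is almost sure over the environment. Crucially, this subsequence does not depend on any harmonic function $h$.

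Now fix an environment in this full-measure set and let $h$ be any bounded harmonic function on it, say $|h|\le M$. Inequality \eqref{crucial inequality} yields
$$\mathbf E_\rho\bigl|h(\rho)-h(X_1)\bigr|\ \le\ \sqrt{2\,\mathbf E_\rho[\Delta_{n_k}(\rho,X_1)^2]\cdot \mathbf E_\rho[h^2(X_{n_k})]}\ \le\ M\sqrt{2\,\mathbf E_\rho[\Delta_{n_k}(\rho,X_1)^2]},$$
and letting $k\to\infty$ gives $h(\rho)=h(X_1)$ almost surely. As in the proof of Theorem~\ref{no sublinear harmonic}, stationarity upgrades this to $h(X_n)=h(X_{n+1})$ for all $n$ almost surely, and irreducibility then forces $h$ to be constant.

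The only subtle point is step three: ensuring that the exceptional null set of environments can be chosen independently of the bounded harmonic function $h$, which a priori may form an uncountable family. This is handled by fixing the deterministic subsequence $(n_k)$ first, before introducing $h$; then the single full-measure event ``$\mathbf E_\rho[\Delta_{n_k}(\rho,X_1)^2]\to 0$'' is sufficient to treat \emph{all} bounded harmonic functions simultaneously on every environment in that event.
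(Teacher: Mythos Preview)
Your proof is correct and follows exactly the approach the paper suggests (the paper does not give an explicit proof of this corollary, but indicates that inequality~\eqref{crucial inequality} can be used, which is precisely what you do). Two small remarks: your parenthetical in the first step is garbled --- for a decreasing nonnegative sequence $a_n$, the inequality $a_n\le \tfrac{1}{n}\sum_{k\le n}a_k$ \emph{does} hold and gives the conclusion directly; and nonnegativity of $H_n-H_{n-1}$ follows immediately from Theorem~\ref{entropy} itself since the left side there is nonnegative.
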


%\refstepcounter{phoney}
\label{pg:Csiszar}
We would like to emphasize why we use $\Delta(\mu,\nu)$. Csisz\'ar's
inequality \mbox{\cite{Csi63,Csi66}} relates the
total variation between two measures to their relative entropy. In our
context, an inequality involving the total variation can also be
found, hence giving a bound on the best coupling (in time) between two
random walks starting at neighbors. For completeness, we state the
inequality here [it is a consequence of (\ref{eq:TVDelta}) applied to
\eqref{main inequality}]: for a stationary environment $(P,V,\rho)$
and $n>0$, we have
\[
\mathbb E \bigl(\bigl\Vert \law(X_n)-\law(X_n|X_1)
\bigr\Vert _{\mathrm{TV}}^2 \bigr)\le4 (H_{n}-H_{n-1}).
\]
Interestingly, this inequality is not strong enough for our
applications, since controlling the probability that two random walks
merge before time $n$ says nothing about their behavior when they do
not couple.

%pa2.subsection.subsubsection.1 #&#
\subsection*{Other growth rates}
The same
argument as in Theorem~\ref{no sublinear harmonic} can also be used
with growth rates bigger than
polynomial. A general statement would be the following.

%
%th1 #&#
{\renewcommand{\thetheoremmm}{3$'$}
\begin{theoremmm}\label{page:2p}
%Let $(P,V,\rho)$ be a stationary environment. Then
%there are no nonconstant harmonic functions $h$ which satisfy
%\[
%\Ee_\rho[h(X_n)^2]\cdot(H_n-H_{n-1})\to0
%\]
%even if this holds just along a subsequence of $n$.
Let $\P$ be the measure of a stationary environment $(P,V,\rho)$.
For every (nonrandom) sequence $(n_k)_k$ with $n_k \to\infty$,
we have that $\P$-a.s. there does not exist a nonconstant harmonic function
$h\dvtx V \to\R$ such that
\[
\Ee_{\rho} \bigl[ h(X_{n_k})^2 \bigr]
\cdot(H_{n_k} - H_{{n_k}-1}) \to0. %
\]
\end{theoremmm}}

In particular this holds for fixed transitive graphs, which is a
version of a result of \cite{EK10}, Section~5.

%pa2.subsection.subsubsection.2 #&#
\subsection*{Examples} We finish this section by presenting a collection
of examples.

%ex2.1 #&#
\begin{example}[(Random conductance)]\label{random conductance}
Consider the graph $\Z^d$, and let $\nu$ be given by a shift-invariant
law (e.g., i.i.d. positive random variables). We assume that
the set of sites connected by edges with positive conductances is
infinite. The random walk induces a Markov process on the environment
(cf. Kipnis and Varadhan \cite{KV86}), called the environment as seen
from the particle. This process can be made stationary by weighting
each configuration proportionally to $\nu(\rho)$.% (the root $\rho$ in
%our formalism) according to the stationary measure for the random walk.

This model has been studied extensively. Under the assumption of
\emph{uniform ellipticity}: $\exists\alpha>0\dvtx \mathbb
P[\alpha<\nu(x,y)<1/\alpha]=1$, many things are known on the
environment. First, the Poincar\'e inequality is a direct consequence
of the $\Z^d$ case. Second, Delmotte proved in \cite{Del99} that the
Poincar\'e inequality implies that there exist $c_1,c_2>0$ such that
\[
\Pp_\rho[X_n=x]<\frac{c_1}{n^{d/2}}e^{-c_2\Dd(x,\rho)^2/t}
\]
(a corresponding lower bound also holds but is not needed for our
purposes). Third, an annealed invariance principle holds in the sense
that the law of the paths under the measure integrated over the
environment scales to a nondegenerate Brownian motion \cite{KV86}. In
particular, Theorem~\ref{no sublinear harmonic} applies in this case.

Once the assumption of uniform ellipticity is relaxed, matters get
more complicated. An example of random conductance models without
uniform ellipticity is the infinite cluster of percolation which we
will discuss next. For an unusual example of a
transitive conductance model, see the work of Disertori, Spencer and
Zirnbauer \cite{DSZ10} who reduced a supersymmetric hyperbolic sigma
model to the study of random walk on a certain (highly correlated) random
environment.
\end{example}

%ex2.2 #&#
\begin{example}[(Infinite cluster of percolation)]\label{exmp:percolation}
Consider the percolation measure with a parameter $p$ such that there
exists an infinite cluster with probability~1. See
\cite{G99} for details about percolation. Set $\mathbb P_0$ to be the
law of the infinite cluster conditioned to contain 0. As in the
previous example, the random walk on $\omega$ induces a Markov chain on
the space
$\Omega$ of infinite subgraphs of $\Z^d$ containing the origin. When
weighting each configuration proportionally to the number of neighbors
of the origin we obtain a stationary measure with respect to the shift
along the random walk.

Since the infinite cluster of percolation can be seen as a stationary
random graph with polynomial volume growth and since the random walk is
diffusive \cite{Ke86,Bar04}, Theorem~\ref{no sublinear harmonic}
applies, and we get Theorem~\ref{thm:percolation}.
%\begin{corollary}
%Almost surely, there does not exist any sublinear harmonic function on
%the infinite cluster of supercritical percolation on $\Z^d$.
%\end{corollary}
\end{example}

%ex2.3 #&#
\begin{example}[(Centered random environments)]This is our first
nonreversible example. A centered random environment is, roughly
speaking, a Markov chain on $\Z^d$ such that the probabilities can
be ``decomposed'' into a sum over cycles. Such environments, even
when nonreversible, are still heuristically quite close to
reversible, and in particular they have a stationary version which
is related to the usual version by an explicit reweighting, like in
the reversible case~\cite{DK08}, Section~3. See Deuschel and K\"osters
\cite{DK08} for a proof of a CLT, which
implies~(\ref{pg:SDB})---of course, a CLT is much stronger than (\ref{pg:SDB}). Hence,
our results can be applied in this context as well.
\end{example}

%ex2.4 #&#
\begin{example}[(Balanced random environments)]This is another
nonreversible example, which is ``farther'' from reversible than
the previous one. A balanced random environment is a Markov
chain $P$ with state space $\Z^d$ and nearest neighbor movements, such
that for every $x\in\Z^d$ and every unit vector $e_i$,
$P(x,x+e_i)=P(x,x-e_i)$. It follows that $X_n$ is a martingale, and
hence (\ref{pg:SDB}) is an immediate corollary of the Azuma--Hoeffding
inequality. The issue is therefore only stationarity. In the case
that the environment $\mu$ is uniformly elliptic and stationary and
ergodic \emph{to the action of
$\Z^d$} (this is different from our notion of stationarity!), Lawler
showed that there exists a stationary measure
(in our
sense) $\lambda$ which is mutually absolutely continuous with
respect to $\mu$; see \cite{L82}, Theorem~3. Hence our results apply to $\lambda$, and hence
also to $\mu$. Guo and Zeitouni weakened the requirement of uniform
ellipticity to just ellipticity, at the price of restricting the
environment to the i.i.d. case \cite{GZ10}. Berger and Deuschel
\cite{BD11} have removed the requirement of ellipticity
altogether in the i.i.d. case.
\end{example}

%ex2.5 #&#
\begin{example}[(Random environments with cut points)] Under certain
conditions, one can prove that a random walk in nonreversible
random environments in~$\Z^d$, $d$ large enough, has cut points, and
deduce from that a CLT and
the existence of a stationary environment, hence our techniques
apply. See \cite{BSZ03} for the details.
\end{example}

Let us give one example which is not embedded in $\Z^d$, and in fact
has unbounded degrees.

%ex2.6 #&#
\begin{example}[(Poisson point process)]\label{exmp:poisson} Examine a
Poisson point process
in~$\R^d$. Add the point 0 (this is often called ``the Palm
process''), and let it be the root. Construct a graph by some process
invariant under translations of $\R^d$. For example, connect any two
points by an edge with weight which depends on their Euclidean
distance \cite{CFP} or construct the Delauney triangulation
\cite{FGG}. Give each configuration a ``probability proportional to
the total weight of 0.'' The resulting process is stationary and
diffusive; see, for example,  \cite{CFP}, Section~2.1 or \cite{FGG}, Lemma
A.1, for
stationarity---subdiffusivity can be deduced from \cite{BP89},
Section~2, or from the
two previous papers. Hence our theorem applies.
\end{example}

The previous examples dealt with random walks which are
diffusive. An interesting situation, which cannot hold in the case of groups,
is environments with subdiffusive behavior. We give four examples of these.

%ex2.7 #&#
\begin{example}[(Graphical fractals)] A graphical fractal is a graph
which is constructed like one of the classical fractals (the
Sierpinski gasket, e.g.), but inside out---bigger pieces of
the graph are constructed from smaller pieces by connecting them in
a repeated fashion; see \cite{Bar} for precise definitions and main
properties. See Figure~\ref{cap:gasket} for an example, the
graphical Sierpinski gasket. A graphical fractal always has an
invariant measure and is
always diffusive or subdiffusive, and in many examples is
in fact subdiffusive; see, for example, \cite{B04}. Let us
remark that a significant part in the remarkable work of Barlow and
Bass on the Sierpinski carpet \cite{BB99} has to do with the
construction of a coupling. Therefore, a tool (like the one described
in this section) that gives easy proofs that
couplings exist should be useful.
\end{example}

%ex2.8 #&#
\begin{example}[(Critical Galton--Watson trees)]\label{exmp:GW}
The critical Galton--\break Watson tree with any offspring distribution
conditioned to survive is stationary (see \cite{KS10,LP,LPP95}) and
subdiffusive.
If the offspring distribution has finite variance,
the diffusivity exponent $\nicefrac{1}{3}$ was proved in
\cite{Ke86}. Thus Theorem \ref{page:2p} applies in
this case, and we get that it has no harmonic function of growth $o(\Dd
(\rho,x)^{3/2})$.

%f1 #&#
\begin{figure}

\includegraphics{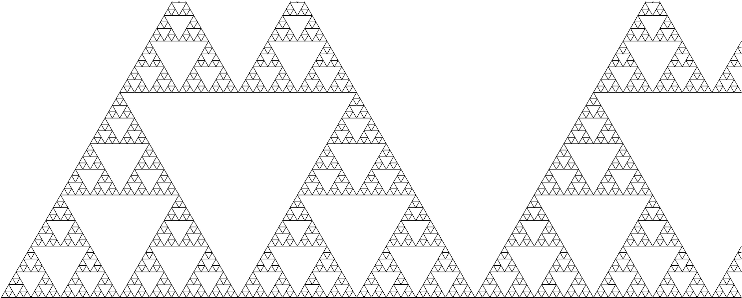}

\caption{A portion of the graphical Sierpinski
gasket.} \label{cap:gasket}
\end{figure}

This example is not so impressive since (as it is well known) this graph
has infinitely many cut-edges between the root and infinity, and therefore
the only harmonic functions (without any growth restrictions) are the
constants. However, the cut-edges argument fails after even slight
variations, while Theorem~\ref{no sublinear harmonic} is
robust. Examples include taking a product of a Galton--Watson tree with
a finite graph or with itself. The same remark applies to the next example.
% However, if we take the graph product of the critical Galton-Watson
%tree (conditioned to survive)
% with itself, we get a sub-diffusive stationary random graph with
%nontrivial harmonic functions.
% Our results imply that in this case there are no linear growth
%harmonic functions a.s.
\end{example}

%ex2.9 #&#
\begin{example}[(Infinite incipient cluster)]
Consider critical percolation on $\Z^d$ conditioned on the fact
that the origin is connected to infinity \cite{Ke86a}. Conditioning on
this event, which has probability 0 (proved
in $d=2$ and high $d$ and conjectured in the others), requires some
care. Nevertheless, the object
can be defined properly using a limit\vspace*{1pt} process. For example, one may
take $p_c+\epsilon$ percolation, condition on~$\vec{0}$ being in the
cluster and then take a limit of
the resulting measures as $\epsilon\to0$. Since for each $\epsilon$
the measure is
stationary (as usual after reweighting the configurations
proportionally to the degree of $\vec{0}$), so will be their limit
if it exists (or any subsequence limit in general). The limit is known
to exist in two dimensions \cite{Ke86a,AJ} and in high dimensions
\cite{HJ04,HHH}.
It was
proved in \cite{Ke86,KN09} that the random walk is subdiffusive on this
cluster (in high dimension the diffusivity exponent is~$\nicefrac{1}{3}$, as on the tree). Since it is embedded in $\Z^d$,
it grows no faster than polynomially and the results may be applied in
this context.
%As in the Galton-Watson case, this graph has infinitely many cut-edges
%separating the
%origin from infinity, so the only harmonic functions on it are the
%constant functions.
%The IIC can be seen as a stationary random
%graph \cite{BC10}. \gady{it's not even mentioned there}
%If we take the graph product of this graph with itself, then the
%diffusivity exponent does not
%change, and polynomial growth is preserved.
%Therefore, there are no linear growth harmonic functions on the
%product as well.
\end{example}

%ex2.10 #&#
\begin{example}[(Graph limits and UIPQ)]\label{exmp:UIPQ}
Let $G_n$ be fixed or random finite graphs. Take $\rho_n$ to be a random
vertex in $G_n$, selected according to the stationary measure on
$G_n$. Then the limit of $(G_n,\rho_n)$, if it exists, is called the
graph limit~\cite{BS01}. This limit is always stationary, \cite{Kri},
Section~1.3.

A particular case is provided by a uniformly chosen planar
quadrangulation $G_n$ with $n$
faces. The graph limit is known as the
uniform infinite planar quadrangulation.
It is well known to be of polynomial growth \cite{CD06}.
In \cite{BC11}, it was proved to be
subdiffusive with diffusivity exponent bounded from above by $\nicefrac
{1}{3}$. Thus there are no linear growth harmonic functions in
this case either.
\end{example}

%pa2.subsection.subsubsection.3 #&#
\textit{A remark on connectivity.}
We assumed throughout that the environment $(P,V,\rho)$ is irreducible,
that is, that for any $v,w\in V$ there is some $n$ such that
$P^n(v,w)>0$. This assumption was only used once: we showed that a
not-necessarily-irreducible stationary environment satisfies that
every harmonic function $h$ has $h(\rho)=h(X_1)$ almost surely, and
concluded, using irreducibility, that $h$ is constant. The assumption
of irreducibility is of course necessary, as a disconnected graph
always has bounded nonconstant harmonic functions, namely functions
which are constant on each component, but with different values.

Nevertheless, in the nonreversible case, the assumption of
irreducibility can be weakened slightly: we only need to assume that
for every $v$ and $w$ there exist $n,m$ and $x$ such that
$P^n(v,x)>0$ and $P^m(w,x)>0$. The proof is the same---since
$h(\rho)=h(X_1)$ almost surely then this gives that $h(v)=h(x)=h(w)$
almost surely and $h$ is constant. The following stationary graph
provides a simple example. Take a 3-regular tree $T$.
%choose one
%bi-infinite simple path $\gamma$ on this tree, and oriented all edges
%outside $\gamma$ in the direction of $\gamma$, and all edges in
%$\gamma$ toward one of the ends of $\gamma$. It is easy to see that
%the resulting environment is stationary
Choose a height function $\ell$ (i.e., a function such that each vertex
has one neighbor with $\ell$ bigger by one, and two neighbors with
$\ell$ smaller by one), and orient all edges ``up,'' that is, in the
direction of the larger $\ell$. Of course, the random walk on the
resulting graph is so degenerate it can hardly
be called random, as each vertex has only one outgoing edge. But this
is irrelevant at this point.
This
environment is not irreducible in the usual sense, but does satisfy
the weaker assumption and hence our results apply (again, in this case
it is simple to analyze the harmonic functions directly).
Taking the graph product with $\mathbb{Z}$ will yield a slightly less
trivial example.

%s3 #&#
\section{Polynomial growth harmonic functions}\label{sec:polynomial growth}

In this section we prove Theorem~\ref{finite dimension}. The proof
boils down to the observation that macroscopic Poincar\'e inequality
and volume growth estimates are sufficient. The strategy follows the
lines of Shalom and Tao \cite{ST,T10}, where a quantitative version of
Gromov's theorem on groups of polynomial growth (any group
of polynomial growth is virtually nilpotent) is proved. The proof is
inspired by an elegant proof of this theorem due to Kleiner
\cite{Kle10} utilizing spaces of harmonic functions with polynomial
growth in a crucial way. We start with a very general inequality,
called the reverse Poincar\'e inequality, which holds in any
graph. For the sake of completeness, we prove it in our context.

%pr12 #&#
\begin{proposition}[(Reverse Poincar\'e inequality)]\label{reverse Poincare}
For any weighted graph $(G,\nu)$
and any function $h\dvtx G\rightarrow\mathbb R$ harmonic on a ball $\Bb_x(2n)$,
%
%e14 #&#
\begin{equation}
\sum_{(y,z) \in E(\Bb_x(n))} \bigl(h(z)-h(y) \bigr)^2
\nu(y,z)\le\frac
{4}{n^2}\sum_{y \in\Bb_x(2n)}h(y)^2
\nu(y)
\end{equation}
for every $x\in G$ and $n>0$.
\end{proposition}

\begin{pf}
For this proof, we denote the quantity $f(x)$ by $f_x$. Let
$h\dvtx G\rightarrow\mathbb R$ be harmonic on $\Bb_x(2n)$, and
let $\phi$ be a function such that $\phi_y = 1$ for $y \in\Bb_x(n)$,
$\phi_y = 0$ for $y \notin\Bb_x(2n-1)$ and $|\phi_y - \phi_z| \leq
1/n$ for all $y \sim z$.
For example,
\[
\phi_y:=\min \biggl(1,2-\frac{\Dd(y, x)}{n} \biggr)\qquad \mbox{for any }y
\in\Bb_x(2n). %
\]
%
%$$ \phi_y: = \left\{ \begin{array}{lr}
%
%\end{array} \right. $$
We have
%
%e15 #&#
\begin{equation}
\label{eq:justaddphi} \sum_{E(\Bb_x(n))} (h_y-h_z
)^2\nu(y,z)=\sum_{E(\Bb
_x(n))}\frac{1}2
\bigl(\phi_y^2+\phi_z^2\bigr)
(h_y-h_z )^2\nu(y,z).
\end{equation}
To make the calculation a little shorter we represent the sum on the
right-hand side of (\ref{eq:justaddphi}) as a sum of $\frac{1}2
\phi_y^2(h_y-h_z)^2$ over \emph{directed edges}. Denote by $E^*$ the
set of directed edges in $B_x(2n)$, that is, both $(y,z)$ and $(z,y)$
appear in $E^*$ and are different. For an edge $(y,z)\in E^*$, a
straightforward (if a little lengthy) computation shows that
$\phi^2_y(h_z-h_y)^2$ is equal to the quantity
\[
\bigl(h_z\phi^2_z-h_y
\phi^2_y\bigr) (h_z-h_y)-h_z(
\phi_z-\phi _y)^2(h_z-h_y)-2h_z
\phi_y(\phi_z-\phi_y) (h_z-h_y).
\]

We start by dealing with the first term. Rearranging the sum [using
the fact that $h\phi^2$ vanishes outside $\Bb_x(2n-1)$ to add the
missing terms on the boundary] gives
\[
\sum_{E^*}\bigl(h_z
\phi^2_z-h_y\phi^2_y
\bigr) (h_z-h_y)\nu(y,z)= 2\sum
_{y\in\Bb_x(2n-1)}h_y\phi^2_y
\biggl(\sum_{z\sim y}( h_y-h_z)
\nu(z,y) \biggr).
\]
Since $h$ is harmonic, this sum equals 0.

For the second term, since $|h_z(h_z-h_y)|\le\frac{3}2 h_z^2+\frac{1}2
h_y^2$ and $|\phi_z-\phi_y|\leq1/n$, we have that
each summand is bounded by $(3h^2_z+h^2_y)/(2n^2)$.
When summing over $E^*$ we obtain
\[
\biggl\llvert \sum_{E^*}h_z(
\phi_z-\phi_y)^2(h_z-h_y)
\nu(y,z)\biggr\rrvert \le \frac{2}{n^2}\sum_{y\in\Bb_x(2n)}h^2_y
\nu(y).
\]

For the third term, note that
%
%e16 #&#
\begin{equation}
\label{eq:1/41}\bigl |h_z\phi_y(\phi_z-
\phi_y) (h_z-h_y) \bigr|\le\tfrac
{1}{4}(h_y-h_z)^2
\phi ^2_y+ h^2_z(
\phi_z-\phi_y)^2.
\end{equation}
So,
\begin{eqnarray*}
&&\sum_{E^*} \bigl|h_z
\phi_y(\phi_z-\phi_y) (h_z-h_y)
\bigr|\nu(z,y)\qquad
\\
&&\qquad\mbox{by (\ref{eq:1/41})}\le \frac{1}4\sum
_{E^*}(h_y-h_z)^2
\phi_y^2\nu(y,z)+ \sum_{E^*}h^2_z(
\phi_z-\phi_y)^2\nu(y,z)
\\
&&\qquad\hphantom{\mbox{By (\ref{eq:1/41})}}\le \frac{1}4\sum_{E^*}(h_y-h_z)^2
\phi_y^2\nu(y,z)+\frac
{1}{n^2}\sum
_{\Bb
_x(2n)}h^2_y\nu(y)
\end{eqnarray*}
using the bound $|\phi_z-\phi_y|\le\frac{1}n$ for every $y \sim z$.
Putting the bound on the different terms together leads to
\[
\sum_{E^*}(h_y-h_z)^2
\phi_y^2\nu(y,z) \le\frac{1}2\sum
_{E^*}(h_y-h_z)^2
\phi_y^2\nu(y,z)+\frac{4}{n^2}\sum
_{\Bb
_x(2n)}h^2_y\nu(y),
\]
which gives
\[
\sum_{E(\Bb_x(n))}(h_z-h_y)^2
\nu(y,z) \le\frac{1}2\sum_{E^*}(h_z-h_y)^2
\phi_y^2\nu(y,z)\le\frac
{4}{n^2}\sum
_{\Bb
_x(2n)}h^2_y\nu(y). %\qedhere
\]
\upqed\end{pf}

%le13 #&#
\begin{lemma}
Let $(G,\nu,\rho)$ be a rooted graph satisfying the
volume doubling condition $(\mathit{VD})_G$. Then there exists
$\mathbf{c}>0$ such that the following holds. For any $\lambda<
\infty$,
there exist $M_\lambda$ and $n_0$ such that for all $n>n_0$,
there is a covering of the ball $\Bb_\rho(\lambda n)$ by
$k < M_\lambda$ balls $\Bb_{y_1}(n),\dotsc,\Bb_{y_k}(n)$ satisfying that
every point $x\in\Bb_\rho(n)$ belongs to at most $\mathbf{c}$ balls
$\Bb_{y_i}(2n)$.

Furthermore, $\mathbf{c}$ depends only on the volume doubling constant
$\CVD$,
and $M_\lambda$ depends only on $\lambda$ and $\CVD$.
\end{lemma}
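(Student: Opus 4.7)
The plan is to carry out a standard Vitali-type maximal packing argument, where the doubling property \VdG{} plays the role that the doubling of Lebesgue measure plays in $\R^d$. Fix $\lambda$ and set $\lambda' := 2\lambda+1$; I would apply \VdG{} with parameter $\lambda'$, taking $n$ large enough that the doubling inequality holds simultaneously at all radii $\lfloor n/2\rfloor, n, 2n, 4n, \ldots$ up to a $\lambda$-dependent cutoff — since only boundedly many doubling invocations are needed, one threshold suffices.

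First I would choose a maximal collection of points $y_1,\dots,y_k \in \Bb_\rho(\lambda n)$ such that the balls $\Bb_{y_i}(\lfloor n/2\rfloor)$ are pairwise disjoint. Maximality forces $\Bb_\rho(\lambda n) \subseteq \bigcup_i \Bb_{y_i}(n)$: any $x\in \Bb_\rho(\lambda n)$ not covered would have $\Bb_x(\lfloor n/2\rfloor)$ disjoint from every $\Bb_{y_i}(\lfloor n/2\rfloor)$ (because a common $z$ would give $\Dd(x,y_i)\le n$), contradicting maximality.

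For the bound on $k$: the disjoint balls $\Bb_{y_i}(\lfloor n/2\rfloor)$ all lie in $\Bb_\rho((\lambda+1)n)$, while for each $i$ one has $\Bb_\rho((\lambda+1)n) \subseteq \Bb_{y_i}((2\lambda+1)n)$. Iterating \VdG{} at the center $y_i$ a number of times of order $\log_2 \lambda$ yields
\[
\nu(\Bb_\rho((\lambda+1)n)) \;\le\; \nu(\Bb_{y_i}((2\lambda+1)n)) \;\le\; C(\lambda,\CVD)\,\nu(\Bb_{y_i}(\lfloor n/2\rfloor)).
\]
Summing over the disjoint packed balls gives $k \le C(\lambda,\CVD) =: M_\lambda$, which depends only on $\lambda$ and $\CVD$.

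For the overlap bound, set $I := \{i : x \in \Bb_{y_i}(2n)\}$ for any point $x$. Each $y_i$ with $i\in I$ satisfies $y_i \in \Bb_x(2n)$, and hence $\Bb_{y_i}(\lfloor n/2\rfloor) \subseteq \Bb_x(3n) \subseteq \Bb_{y_i}(5n) \subseteq \Bb_{y_i}(8n)$. Disjointness and four applications of \VdG{} at $y_i \in \Bb_\rho(\lambda n) \subseteq \Bb_\rho(\lambda' n)$ give
\[
|I|\cdot\nu(\Bb_{y_i}(\lfloor n/2\rfloor)) \;\le\; \nu(\Bb_x(3n)) \;\le\; \nu(\Bb_{y_i}(8n)) \;\le\; \CVD^{4}\,\nu(\Bb_{y_i}(\lfloor n/2\rfloor)),
\]
so $|I| \le \CVD^{4}$, a constant depending only on $\CVD$. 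The only mild obstacle is the bookkeeping in the previous paragraph — the statement of \VdG{} restricts the centers to macroscopic balls, and one must check that all the centers at which doubling is invoked (all the $y_i$'s, and for the iterations at radii up to $(2\lambda+1)n$) remain in $\Bb_\rho(\lambda' n)$. This is precisely why $\lambda'$ is chosen to dominate every radius appearing in the argument.
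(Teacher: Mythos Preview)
Your proof is correct and follows essentially the same route as the paper: choose a maximal $n/2$-packing inside $\Bb_\rho(\lambda n)$, use maximality for the covering, iterate volume doubling $O(\log\lambda)$ times to bound $k$, and use $\Bb_{y_i}(n/2)\subset\Bb_x(3n)$ together with four doublings to get the overlap bound $\CVD^4$. The only cosmetic difference is that you make the bookkeeping about the parameter $\lambda'$ (ensuring all doubling invocations have centers in the permitted macroscopic range) more explicit than the paper does; note also that in your displayed overlap inequality the $y_i$ on the left should be the minimizer of $\nu(\Bb_{y_i}(\lfloor n/2\rfloor))$ over $i\in I$ (or, equivalently, apply the doubling bound to each $j\in I$ separately before summing), but this is a standard and easily fixed imprecision.
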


We call a covering with this property \emph{proper}. %Think of $\cc_2$
%as being the constant in $(P)_G$.

\begin{pf}
%Let $\e>0$, $\cc_2>0$ and $G$, consider $n$ large enough so that
%\VdG{} holds true with $\lambda=4/\e$\dvtx for every $x\in
% \Bb_\rho(n)$,
%$$\cc_1 n^d \le \nu(\Bb_x(\e n/4)) \le  \Cc_1 n^d.$$
%
%Find a collection of $k$ disjoint balls $\Bb_{y_1}(\e n/4)$,\dotsc,$
%\Bb_{y_k}(\e n/4)$ so that each point of $\Bb_\rho(n)$ is at distance $
%\e n/2$ of one of the balls. This collection has the following
%properties:
%\begin{itemize}
%\item$\Bb_{y_1}(\e n)$,\dotsc, $\Bb_{y_k}(\e n)$ is a covering of $
%\Bb_\rho(n)$,
%
%\item$k\leq\Cc_1/\cc_1(4/\e)^d$: indeed using $(V_d)_G$, $\mu(
%\Bb_{y_i}(\e n/4))\ge\cc_1 (\e n/4)^d$ for every $i$ and $\mu(\Bb_
%\rho(n))\le\Cc_1 n^d$. Since the balls are disjoint, it implies the
%inequality.
%
%\item At most $(4\cc_2)^d\Cc_1/\cc_1$ balls overlap: indeed, for every
%$x\in\Bb_\rho(n)$, $\mu(\Bb_x(\cc_2\e n))\le\Cc_1(\cc_2\e n)^d$, so
%that at most $(4\cc_2)^d\Cc_1/\cc_1$ points $y_i$ can belong to $\Bb_x(
%\cc_2\e n)^d$.
%\end{itemize}
%This proves the claim with $\cc:=\Cc_1/\cc_1(4\cc_2)^d$ and $M_
%\varepsilon:=\Cc_1/\cc_1(\cc_2/\varepsilon)^d$.
Let $\lambda$ and $G$ be as above. %Let $\cc_3 = (\cc_2 + \frac{1}{4})
%\e$,
Let $n$ be large enough so that $(\mathit{VD})_G$ holds for $2\lambda$ and $n/2$.
%% that give
%for all $x \in\Bb_\rho(\labda n)$,
%$$ \nu(\Bb_x(\e n/4) ) \geq\cc_1 (\e n /4)^d,
%\nu(\Bb_x(\cc_3 n)) \leq\Cc_1 (\cc_3 n)^d, $$
%and $\nu(\Bb_\rho(n)) \leq\Cc_1 n^d$.
Given this, we can choose a maximal family of disjoint balls
$\Bb_{y_1}(n/2), \ldots, \Bb_{y_k}( n/2)$
with $y_j \in\Bb_\rho(\lambda n)$ for all $j$:
%such that every vertex in $\Bb_\rho(n)$ is at distance at most $\e
%n/2$ of one of these $k$ balls.

\begin{itemize}
\item Since the family $\{\Bb_{y_j}(n/2)\}$ is maximal, every vertex
in $\Bb_\rho(\lambda n)$ must be within distance $\le n$ from one of
the $y_j$, so $\Bb_\rho(\lambda n)$ is covered by $\Bb_{y_1}( n),
\ldots, \Bb_{y_k}(n)$.

\item For any $x \in\Bb_\rho(\lambda n)$, if $x \in\Bb_{y_j}(2n)$,
then $\Bb_{y_j}( n/2) \subset\Bb_x(3 n)$.
Using volume doubling we see that $\nu(\Bb_x(3 n)) \leq\CVD^4 \nu
(\Bb
_{y_j}(n/2))$,
hence (since these balls are disjoint) %and since $\nu(\Bb_{y_j}(\e
%n/4)) \geq\cc_1 (\e n /4)^d$,
we have that the number of $y_j$ such that $x\in\Bb_{y_j}(2n)$ is at
most $\CVD^4$.
%$\cc: = \frac{\Cc_1}{\cc_1} \cdot(\frac{4 \cc_3}{\e} )^d = \frac{
%\Cc_1}{\cc_1} \cdot(4 \cc_2 +1)$.

\item Using the volume doubling similarly, we get that
$
\nu(\Bb_\rho((\lambda+1)n)) \le\break  \Cc\nu(\Bb_{y_j}(n/2))
$ for any $j$
(the constant is $\CVD^{\lceil\log_2(\lambda+1)\rceil+2}$).
Since these balls are all disjoint and fully contained in
$\Bb_\rho((\lambda+1)n)$, we get
\begin{eqnarray*}
k \min_j \nu\bigl(\Bb_{y_j}(n/2)\bigr) &\le& \nu
\biggl(\bigcup_j\Bb_{y_j}(n/2) \biggr)
\le\nu\bigl(\Bb_\rho\bigl((\lambda +1)n\bigr)\bigr)\\
&\le& \Cc\min
_j\nu\bigl(\Bb_{y_j}(n/2)\bigr),
\end{eqnarray*}
%
%, since $\nu(\Bb_{y_j}(\e n/4)) \geq\cc_1 (\e n /4)^d$,
%and since $\nu(\Bb_\rho(n)) \leq\Cc_1 n^d$, we get that the number of
%balls is at most
%$k \leq M_\e: = \frac{\Cc_1}{\cc_1} \cdot(\frac{4}{\e})^d$.
and we get that the number of balls $k$ is bounded by the same $\Cc$.\quad\qed%
%\qedhere
\end{itemize}
\noqed\end{pf}

%le14 #&#
\begin{lemma}\label{lemma b}
Let $(G,\nu,\rho)$ be a rooted graph satisfying $(P)_G$. Then there
exists a $\mathbf{c}>0$ such that
for every $\varepsilon>0$ and $n$ large enough, and for every proper
covering of $\Bb_\rho(n)$ by balls of radius $\varepsilon n$, if
$h\dvtx G\rightarrow\mathbb R$ is harmonic and has 0 mean on all the balls
of the covering, then
%
%e17 #&#
\begin{equation}
\sum_{z\in\Bb_\rho(n)}h(z)^2\nu(z)\leq\mathbf{c}
\varepsilon ^2 \sum_{z\in\Bb
_\rho(4 n)}h(z)^2
\nu(z).
\end{equation}

Further, $\mathbf{c}$ depends only on $\CP$, the constant in the
Poincar\'e
inequality and on the constants in the definition of a proper cover.
\end{lemma}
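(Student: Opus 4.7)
The plan is to combine the Poincaré inequality applied locally on each ball of the covering (which is where the zero-mean hypothesis gets used) with the reverse Poincaré inequality (Proposition \ref{reverse Poincar\'e}) applied once at scale $2n$. The outcome of the first step will be an upper bound on $\sum_{\Bb_\rho(n)}h^2\nu$ by the Dirichlet energy $\sum_{E(\Bb_\rho(2n))}(h(y)-h(z))^2\nu(y,z)$, times $\varepsilon^2 n^2$ (and a dimensional constant). The reverse Poincaré bound then turns that Dirichlet energy, in turn, into $\tfrac{1}{n^2}\sum_{\Bb_\rho(4n)}h^2\nu$, and the two factors of $n^2$ cancel to leave exactly the claimed $\varepsilon^2$.

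In detail, first I would apply \PG{} to each ball $\Bb_{y_i}(\varepsilon n)$ of the proper covering, with the function $h$. Since the mean $\overline{h}_{\Bb_{y_i}(\varepsilon n)}$ vanishes by hypothesis, the Poincaré inequality reads
\[
\sum_{y\in\Bb_{y_i}(\varepsilon n)}h(y)^2\nu(y)\;\le\;\CP\,(\varepsilon n)^2\sum_{(y,z)\in E(\Bb_{y_i}(2\varepsilon n))}(h(y)-h(z))^2\nu(y,z).
\]
For this to be legitimate I need $y_i\in \Bb_\rho(\lambda\,\varepsilon n)$ for some fixed $\lambda$; since the $y_i$ lie in $\Bb_\rho(n)$, $\lambda=1/\varepsilon$ works, and \PG{} then applies provided $n$ is large enough (the threshold depends on $\varepsilon$, but the covering lemma gives us freedom to take $n$ large).

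Next I would sum these inequalities over $i=1,\dots,k$. On the left, the balls $\Bb_{y_i}(\varepsilon n)$ cover $\Bb_\rho(n)$, so $\sum_{\Bb_\rho(n)}h(z)^2\nu(z)$ is dominated by the sum of the left-hand sides. On the right, the properness of the covering guarantees that each vertex (and thus each edge) of $\bigcup_i\Bb_{y_i}(2\varepsilon n)$ appears in at most $\cc$ of the enlarged balls, and for $\varepsilon\le 1/2$ the union $\bigcup_i\Bb_{y_i}(2\varepsilon n)$ is contained in $\Bb_\rho(2n)$. This yields
\[
\sum_{\Bb_\rho(n)}h(z)^2\nu(z)\;\le\;\cc\,\CP\,\varepsilon^2\,n^2\sum_{(y,z)\in E(\Bb_\rho(2n))}(h(y)-h(z))^2\nu(y,z).
\]

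Finally, since $h$ is harmonic on $\Bb_\rho(4n)$, Proposition \ref{reverse Poincar\'e} applied at $x=\rho$ with radius $2n$ gives
\[
\sum_{(y,z)\in E(\Bb_\rho(2n))}(h(y)-h(z))^2\nu(y,z)\;\le\;\frac{1}{n^2}\sum_{y\in\Bb_\rho(4n)}h(y)^2\nu(y),
\]
and combining the two displays produces the claimed inequality with a constant depending only on $\CP$ and on the combinatorial constant $\cc$ of the properness property (which itself depends only on $\CVD$). The only mildly delicate point is bookkeeping the scales: making sure that \PG{} is being invoked at the right macroscopic scale (with $\lambda=1/\varepsilon$), that the enlarged balls $\Bb_{y_i}(2\varepsilon n)$ really sit inside $\Bb_\rho(2n)$, and that the reverse Poincaré inequality is applied on a ball on which $h$ is genuinely harmonic (hence the $\Bb_\rho(4n)$ on the right-hand side).
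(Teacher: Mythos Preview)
Your proposal is correct and follows essentially the same route as the paper: apply \PG{} on each covering ball (using the zero-mean hypothesis), sum using the bounded-overlap property of a proper covering to land inside $E(\Bb_\rho(2n))$, and then apply the reverse Poincar\'e inequality at scale $2n$ to convert the Dirichlet energy into $\sum_{\Bb_\rho(4n)}h^2\nu$. Your bookkeeping of the scales (taking $\lambda=1/\varepsilon$ in \PG, the inclusion $\Bb_{y_i}(2\varepsilon n)\subset\Bb_\rho(2n)$, and the use of Proposition~\ref{reverse Poincar\'e} on $\Bb_\rho(4n)$) matches the paper's, and you are right that the final constant also picks up the overlap bound $\cc$ from the proper covering.
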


\begin{pf}
Fix $n$ large enough so that
$(P)_G$ holds true for $\lambda=1/\varepsilon$ and $\varepsilon n$,
that is, such that for
every $x\in
\Bb_\rho(n)$ and $f$ a map on $\Bb_\rho(n)$,
\[
\sum_{y\in\Bb_x(\varepsilon n)} \bigl(f(y)-\overline{f}_{\Bb
_x(\varepsilon n)}
\bigr)^2\nu (y)\leq \CP(\varepsilon n)^2\sum
_{(y,z)\in E(\Bb_x(2\varepsilon
n))}\bigl|f(y)-f(z)\bigr|^2\nu(y,z).
\]

Let $h\dvtx G\rightarrow\mathbb R$ be the harmonic function and
$\Bb_{y_1}(\varepsilon n),\dotsc,\Bb_{y_k}(\varepsilon n)$ be the
proper covering of
$\Bb_\rho(n)$ from the statement of the lemma. The hypothesis asserts that
$\overline{h}_{\Bb_{y_i}(\varepsilon n)}=0$ for every $i$, so that
Poincar\'e
inequality implies
\begin{eqnarray*}
\sum_{\Bb_{y_i}(\varepsilon n)}h^2(z)\nu(z)&=&\sum
_{\Bb
_{y_i}(\varepsilon
n)}\bigl(h(z)-\overline{h}_{\Bb_{y_i}(\varepsilon n)}
\bigr)^2\nu(z)
\\
&\le& \CP \varepsilon ^2n^2\sum
_{E(\Bb_{y_i}(2\varepsilon n))}\bigl(h(z)-h(t)\bigr)^2\nu(z,t).
\end{eqnarray*}
Since the $\Bb_{y_i}(2\varepsilon n)$ have uniformly bounded overlap
(each point
belong to at most~$\mathbf{c}$ balls), and since $\Bb
_{y_i}(2\varepsilon n)\subset
\Bb_\rho(2n)$, we find
%
%e18 #&#
\begin{eqnarray}
&&\sum_{\Bb_\rho(n)}h^2(z)\nu(z)
\nonumber
\\[-8pt]
\\[-8pt]
\nonumber
&&\qquad\le\mathbf{c}\CP
\varepsilon^2 n^2\sum_{E(\Bb
_\rho(2 n))}
\bigl(h(z)-h(t)\bigr)^2\nu(z,t).
\end{eqnarray}
Using the reverse Poincar\'e inequality (Proposition~\ref{reverse
Poincare}) for the larger ball, we conclude
%
%e19 #&#
\begin{equation}
\sum_{\Bb_\rho(n)}h^2(z)\nu(z)\le4\mathbf{c}\CP
\varepsilon^2 \sum_{\Bb_\rho(4
n)}h^2(z)
\nu(z),
\end{equation}
which implies the claim with the constant in the statement of the
lemma being~$4\mathbf{c}\CP$.
\end{pf}

\begin{pf*}{Proof of Theorem~\ref{finite dimension}}
We aim to prove
that the space of harmonic functions $u$ such that $|u(x)|\leq
\Cc\Dd(\rho,x)^k$ for every $x\in G$ is finite dimensional. Consider
a rooted graph $G$ satisfying $(\mathit{VD})_G$ and $(P)_G$. Let
$\mathbf{c}$ be large enough so that the two previous lemmas hold true.
On the set of harmonic functions on $\Bb_\rho(n)$, a scalar product
between two functions can be defined by
\[
\langle f,g \rangle_n=\sum_{\Bb_\rho(n)}f(x)g(x)
\nu(x).
\]
Consider $d$ harmonic functions $u_1,\dotsc, u_d$ on $G$ and set
$V=\mathrm{span}(u_1,\dotsc,u_d)$. Our goal is to compare
$\langle\cdot,\cdot\rangle_n$ and $\langle\cdot,\cdot\rangle
_{4n}$ for
these functions.

Let $\varepsilon>0$ be some parameter to be fixed later. For $n$ large enough,
there exists a proper covering $\Bb_{y_1}(\varepsilon
n),\dotsc,\Bb_{y_M}(\varepsilon n)$ of $\Bb_\rho(n)$ by
$M=M_{1/\varepsilon}$
balls. Therefore there is a codimension $d-M$ vector space $V_0\subset
V$ of
harmonic functions with mean 0 on each of the balls $\Bb
_{y_i}(\varepsilon
n)$. Let $v_1,\dotsc,v_d$ be an orthogonal basis of $V$ for
$ \langle\cdot,\cdot \rangle_{4n}$ such that
$v_1,\dotsc,v_{d-M}$ is a basis of $V_0$. Examine the Gram
matrix of $\{v_i\}$, that is, the $d\times
d$ matrix whose entries are $\langle v_i,v_j\rangle_n$. Then
\begin{eqnarray*}
\det \bigl[\bigl\{\langle v_i,v_j\rangle_n
\bigr\}_{i,j} \bigr]&\le &\prod_{i=1}^d
\langle v_i,v_i \rangle_{n}
\\
&\le&\prod_1^{d-M}\mathbf{c}
\varepsilon^2 \langle v_i,v_i
\rangle_{4 n} \prod_{i=d-M+1}^{d}
\langle v_i,v_i \rangle_{4n}
\\
&=&\bigl(\mathbf{c}\varepsilon^2\bigr)^{d-M}\det \bigl[\bigl
\{\langle v_i,v_j\rangle _{4 n}\bigr
\}_{i,j} \bigr],
\end{eqnarray*}
where in the first line we have used Hadamard's
inequality, in the second Lemma~\ref{lemma b} and in the last, the
fact that $(v_i)$ is orthogonal for
$\langle\cdot,\cdot\rangle_{4n}$. Now, the ratio of two Gram
determinants is preserved by linear operations on vectors, so we can
return from the basis $\{v_i\}$ (which was specific to $n$) to our
``original'' basis~$\{u_i\}$. We get
\[
\det \bigl[\bigl\{\langle u_i,u_j\rangle_n
\bigr\}_{i,j} \bigr]\le\bigl(\mathbf{c}\varepsilon ^2
\bigr)^{d-M}\det \bigl[\bigl\{\langle u_i,u_j
\rangle_{4n}\bigr\}_{i,j} \bigr].
\]
Iterating the reasoning, we find for every $r>0$
\[
\det \bigl[\bigl\{
\langle u_i,u_j\rangle_n\bigr
\}_{i,j} \bigr]\le \bigl[\bigl(\mathbf{c}\varepsilon ^2
\bigr)^{d-M} \bigr]^r\det \bigl[\bigl\{\langle
u_i,u_j\rangle_{4^rn}\bigr\}_{i,j}
\bigr].
\]
%
%Since every entry of the matrix is smaller than
%$\left[\Cc(4^rn)^k\right]^2\nu(\Bb_\rho(4^rn)\le\Cc(4^rn)^{2k+d}$
%thanks to the bound on the growth of the harmonic functions, we find
The growth of our harmonic functions ensures that every entry of the
matrix is smaller than
$ [\Cc(4^rn)^k ]^2\nu(\Bb_\rho(4^rn))\le
\Cc(4^rn)^{2k+\mathbf{c}}$. Hence we can write
\[
\det \bigl[\bigl\{\langle u_i,u_j\rangle_n
\bigr\}_{i,j} \bigr]\le d!n^{(2k+\mathbf{c})d}\Cc ^{d}\bigl(
\bigl(\mathbf{c}_1\varepsilon^2\bigr)^{d-M} 4
^{(2k+\mathbf{c}_2)d}\bigr)^r.
\]
We now fix $\varepsilon^2$ to be $4^{-4k-2\mathbf{c}_2}/\mathbf
{c}_1$. If $d>2M$, this implies
\[
\bigl(\mathbf{c}_1\varepsilon^2\bigr)^{d-M}4^{(2k+\mathbf
{c}_2)d}=4^{(2k+\mathbf{c}_2)(2M-d)}<1,
\]
and %and $\e=(\cc^{1+kd/(d-M_\e)})^{-1}$,
the right-hand side would
converges to 0. We deduce that $\det [\{\langle u_i,\break u_j\rangle_n\}
_{i,j} ]=0$, and that the $u_i$ restricted to the ball of radius $n$
form a dependent family. Since this is true for every $n$ large enough,
we easily deduce that $(u_i)$ is a linearly dependent family. The
result holds for any family of $d$ harmonic functions with growth
bounded by $\Cc\Dd(\cdot,\rho)^k$. It implies that the dimension of the
vector space of harmonic functions with such growth is smaller or equal
to $2M$.
\end{pf*}

%\begin{remark}
%The event that the dimension of the space of harmonic functions with
%some prescribed growth is equal to $d$, is invariant under shift by
%$X_1$. Therefore, for any ergodic stationary random graph, it must be
%constant. \gady{How is Liouville's property relevant here? I did not
%understand. (Hugo) to have the 0-1 law on translation-invariant
%events?} Since stationary random graphs with polynomial growth have
%trivial tail $\sigma$-algebra \cite{BC10} and are therefore trivial.
%In particular, the dimension must be constant.
%\end{remark}

%ex3.1 #&#
\begin{example}[(Infinite cluster of percolation)]
The infinite cluster of percolation satisfies
$(\mathit{VD})_\omega$ and $(P)_{\omega}$ almost
surely \cite{Bar04}. Therefore, spaces of harmonic functions with
prescribed polynomial growth are finite dimensional.
\end{example}

%ex3.2 #&#
\begin{example}[(Random conductance)]
Random conductances with uniform elliptic conditions also satisfy
$(\mathit{VD})_\omega$ and $(P)_{\omega}$ deterministically. Therefore, spaces of harmonic
functions with prescribed polynomial growth are finite dimensional.
\end{example}

%ex3.3 #&#
\begin{example}[(Wedges)]Let $f$ be some \emph{slowly varying}
function from $[0,\infty)\to[0,\infty)$. Define the wedge with
respect to $d$ and $f$ to be
\[
W:=\bigl\{x\in\Z^d\dvtx |x_d|\le f\bigl(|x_1|+
\dotsb+|x_{d-1}|\bigr)\bigr\}.
\]
Then it is well-known and not difficult to see that $W$ (with the
graph structure inherited from $\Z^d$) satisfies volume doubling and
Poincar\'e inequality. Under some weak conditions on $f$ and $d$
(which we will not detail here, as that would take us too off-topic)
so would percolation on $W$. Hence both $W$ and supercritical
percolation on it have a finite dimensional space of harmonic functions.
\end{example}

\section{Linearly growing harmonic functions on the infinite cluster of
percolation}\label{sec:linear growth}

In this section, we fix $d \geq2$ and $p>p_c(d)$. As before, we denote
the infinite cluster of percolation by $\omega$, and we draw it in
$\mathbb R^d$ in such a way that $\rho$ coincides with the origin. The
graph $\omega$ can be thought of as an approximation of~$\Z^d$. In
particular, macroscopic properties of the cluster are the same as those
of~$\mathbb R^d$. For instance, the random walk satisfies an invariance
principle $(\mathit{CLT})_\omega$ \cite{BB07,MP07,SS04}: define
\[
\tilde B_n(t):= \frac{1}{\sqrt n} (X_{tn} ),
\]
where for noninteger $tn$ we define $X_{tn}$ as the linear
interpolation between $X_{\lfloor tn \rfloor}$
and $X_{\lceil tn \rceil}$;
that is,
$X_{tn} = X_{\lfloor tn \rfloor} (tn - \lfloor tn \rfloor) +
X_{\lceil
tn \rceil} (\lceil tn \rceil- tn) $.
%(where we slightly abuse notations by defining $X_t$ for $t$
%nonintegers by approximating linearly between $\lfloor t \rfloor$ and
%$\lceil t \rceil$).
There exists $\sigma(d)$ such that the law of $(\tilde
B_n(t),0<t<\infty)$ converges weakly to the law of a Brownian motion
with variance $\sigma(d)$ as $n\to\infty$. The main step in the proof
in all three papers
\cite{BB07,MP07,SS04} is the construction of a $d$-dimensional space of
linearly growing harmonic functions $ \{f_v \}_{v\in\R^d}$
such that $f_v$ has
slope $v$, that is, $f_v(x)=\langle v,x\rangle+o(|x|)$. Let us state
this as a theorem.

\begin{theorem}[(\cite{BB07,MP07,SS04})]\label{corrector}
Let $d\geq2$, and $p>p_c(d)$.
Let $\omega$ be the infinite cluster of percolation on $\Z^d$ with
parameter $p$.
Then,
there exists $\chi\dvtx \omega\rightarrow\mathbb R^d$ such that $x\mapsto
x+\chi(x)$ is harmonic on $\omega$, and
\begin{equation}
\lim_{n\rightarrow\infty}\frac{1}n\sup_{x\in\Bb
_\rho
(n)} \bigl|
\chi(x) \bigr|=0\qquad \mbox{a.s.}
\end{equation}
This (random) function is called the \emph{corrector}.
\end{theorem}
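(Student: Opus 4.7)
The natural strategy is to construct $\chi$ coordinate by coordinate, so fix a unit vector $e_k$ and build a scalar function $\chi_k:\omega\to\R$ such that $h_k(x):=x_k+\chi_k(x)$ is harmonic; one then takes $\chi=(\chi_1,\dots,\chi_d)$. I would work in the ``environment as seen from the particle'' framework: let $\mathbb{Q}$ be the law on rooted infinite clusters obtained from $\mathbb{P}_p(\,\cdot\mid 0\in\omega)$ by reweighting proportionally to $\deg_\omega(0)$, which is stationary and reversible for the walk. The construction reduces to finding, for every edge $e=(x,y)$ of $\omega$, a ``gradient'' $\psi_k(x,y)$ such that $\psi_k(x,y)=-\psi_k(y,x)$, the cocycle property $\psi_k(x,z)=\psi_k(x,y)+\psi_k(y,z)$ holds along paths, and the harmonicity equation $\sum_{y\sim x}(y_k-x_k+\psi_k(x,y))=0$ holds at every vertex.

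Such a $\psi_k$ is produced by a Hilbert-space projection argument (Kipnis--Varadhan). Consider the space $\mathcal{H}$ of square-integrable cocycles on the environment under $\mathbb{Q}$, together with its subspace $\mathcal{H}_{\mathrm{pot}}$ of ``potential'' cocycles (those of the form $\psi(x,y)=f(\tau_y\omega)-f(\tau_x\omega)$ for $f\in L^2(\mathbb{Q})$). The deterministic field $\psi^{(0)}(x,y):=y_k-x_k$ lives in $\mathcal{H}$; project it onto the orthogonal complement of $\mathcal{H}_{\mathrm{pot}}$ inside the closed subspace of divergence-free cocycles, and set $\psi_k:=\psi^{(0)}-\text{(potential part)}$. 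The defining orthogonality then is exactly the harmonicity equation. Integrating $\psi_k$ along any path from $\rho$ to $x$ (path-independence comes from the cocycle condition) defines $\chi_k(x)$ up to an additive constant, normalized by $\chi_k(\rho)=0$. By construction $x\mapsto x+\chi(x)$ is harmonic on $\omega$.

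The hard part is the sublinearity $\sup_{|x|\le n}|\chi(x)|=o(n)$. Harmonicity of $h_k$ gives a vector-valued martingale $M_n=X_n+\chi(X_n)$ with stationary, ergodic, $L^2$ increments under the annealed measure; combined with the diffusive bound $\E|X_n|^2\le Cn$ coming from \SDB{}, one obtains $\E|\chi(X_n)|^2\le Cn$, and the Birkhoff/Kipnis--Varadhan ergodic theorem upgrades this to $\chi(X_n)=o(\sqrt{n})$ in a suitable sense along the walk path. The main obstacle is turning this \emph{along the walk} statement into the uniform bound on $\Bb_\rho(n)$. Here one invokes the Gaussian heat kernel estimates \eqref{a}: the upper bound and a union bound control how large $|\chi|$ can be on exceptional points, while the lower bound implies that every vertex $x\in \Bb_\rho(n)\cap\omega$ is ``visited with detectable probability'' by the walk up to time $O(n^2)$, so oscillations of $\chi$ over a mesoscopic scale are controlled by oscillations along walk trajectories. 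This is the step that actually uses Barlow's regularity of the cluster (stretched-exponential tails on the random scale $n_x(\omega)$ in \eqref{eq:stretched}), and where the three cited papers differ in technique but agree in conclusion.

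In the writeup I would simply attribute the construction and sublinearity to \cite{SS04,BB07,MP07}, pointing out that the harmonicity of $x+\chi(x)$ is immediate from the projection construction and that \SDB{}, combined with the Gaussian heat kernel bounds and the ergodic theorem, suffices to obtain the claimed $o(n)$ bound on $\sup_{\Bb_\rho(n)}|\chi|$.
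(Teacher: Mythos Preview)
The paper does not contain a proof of this theorem. Theorem~\ref{corrector} is stated with attribution to \cite{SS04,BB07,MP07} and is used as a black box in the proof of Theorem~\ref{dimension linear}; there is nothing in the paper to compare your proposal against.

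Your sketch is a reasonable outline of the construction in those references, but two points are slightly off. First, the projection description is garbled: one decomposes the coordinate cocycle $\psi^{(0)}(x,y)=y_k-x_k$ in $L^2$ as a sum of a cocycle in the closure of potential (gradient) fields and a divergence-free (``harmonic'') cocycle; the corrector $\chi_k$ is obtained by integrating minus the potential part, so that $\nabla(x_k+\chi_k)$ equals the harmonic part. Saying you project onto the orthogonal complement of $\mathcal{H}_{\mathrm{pot}}$ and then subtract a ``potential part'' mixes the two descriptions. Second, the intermediate estimate is not $\chi(X_n)=o(\sqrt{n})$; what the ergodic/Kipnis--Varadhan input gives is $\chi(X_n)/n\to 0$ almost surely (sublinearity along the walk) together with $L^2$ control of the increments. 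The passage from sublinearity along the walk to the uniform spatial bound $\sup_{\Bb_\rho(n)}|\chi|=o(n)$ is indeed the delicate step, and in \cite{BB07,MP07} it goes through sublinearity along coordinate directions plus Barlow's heat-kernel estimates, roughly as you indicate.
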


With the constant
functions, we get a $(d+1)$-dimensional space of harmonic functions with
\mbox{(sub-)}linear growth.
Our aim in this section is to prove Theorem~\ref{dimension linear} from
the introduction,
namely that there are no other harmonic functions of linear growth.

\subsection*{Proof outline}\label{pg:sketch} Let $h$ be a harmonic
function with
linear growth.
%We extend to $\mathbb R^d$ any function defined on $\omega$ in such a
%way that the function is piecewise linear on faces of the square
%lattice.
Define $h_n\dvtx \mathbb R^d\rightarrow\mathbb R$ such that
$h_n(x)=h(nx)/n$. In order to prove Theorem~\ref{dimension linear}, we
first show that $(h_n)$ forms a precompact family (one can say that $h$
has a scaling limit). The second step is to identify the possible
limits. For this, we use the average property at the discrete level and
the invariance principle to prove that limits are harmonic on $\mathbb
R^d$. If the space of limits is at most $d$-dimensional, one can then
use the absence of nonconstant sublinear harmonic functions to show
that the space of harmonic functions with linear growth is $(d+1)$-dimensional.

\subsection*{Properties of the supercritical cluster} Recall
that the infinite supercritical cluster of percolation $\omega$ can be
seen as a
stationary random graph with polynomial growth. It is well-known that
the system is ergodic with respect to the shift by~$X_1$, see, for example, \cite{BB07}, Theorem~3.1. Typical balls have
the same
growth as in the ambient space $\Z^d$ in the following sense: there
exists constants $c$ and $C$ such that for any finite $\lambda$, any
$n > n_0(\omega)$ sufficiently large and any $x\in\Bb_\rho(\lambda n)$
\begin{equation}
\label{eq:vold} c n^d\le\nu\bigl(\Bb_x(n)\bigr)\le
Cn^d.
\end{equation}
Clearly, \eqref{eq:vold} implies volume doubling \VdW{}. Moreover, the
graph satisfies \PW{} almost surely. Both properties were proved by
Barlow \cite{Bar04}. Actually, Barlow proved quantitatively stronger
versions of \eqref{eq:vold}
and \PW{}: he obtained the volume growth estimates and
the Poincar\'e inequality for every ball of radius larger than $C\log
n$ in $\Bb_\rho(n)$. These improved versions allow to prove Harnack
inequalities and Gaussian estimates (\ref{a}) on the heat kernel. In
\cite{Bar04}, these results are stated for
continuous time random walk, but they hold also for simple random
walk, as was explained in \cite{BH09}, Section~2. We do not need the
full force of Gaussian estimates here---in particular
we do not need far off-diagonal lower bounds which are particularly
difficult---so let us make a list of corollaries from these
Gaussian estimates which we will use.

\begin{corollary}
For every $\lambda<\infty$, every $n\in\mathbb N, n>n_0(\omega)$
sufficiently large and every $x\in\Bb_\rho(\lambda n)$,
\begin{equation}
\label{eq:upbound} \Pp_x(X_{n^2}=y)\le C n^{-d}\exp
\bigl[-C\Dd(x,y)^2/n^2 \bigr] \qquad\mbox{for any }y.
\end{equation}
\end{corollary}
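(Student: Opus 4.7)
The plan is to derive this corollary essentially off-the-shelf from the Gaussian upper bound in \eqref{a} (due to Barlow \cite{Bar04}) applied at time $n^2$. The statement of \eqref{a} reads $\Pp_x[X_m=y] \leq C_3 m^{-d/2}\exp[-C_4|x-y|^2/m]$ provided $m > n_x(\omega)$ and $m > |x-y|$; plugging in $m = n^2$ gives $\Pp_x[X_{n^2}=y] \leq C n^{-d}\exp[-C|x-y|^2/n^2]$, which is the desired bound. The only genuine work is to upgrade the pointwise-in-$x$ statement (valid past a random threshold $n_x(\omega)$) into one that holds uniformly over all $x \in \Bb_\rho(\lambda n)$ for every $n$ large enough.

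For this uniformization I would combine two ingredients already introduced: the polynomial volume growth \eqref{eq:vold}, which yields $|\Bb_\rho(\lambda n) \cap \omega| \leq C\lambda^d n^d$, and the stretched exponential tail \eqref{eq:stretched}, which says $\P(x \in \omega,\, n_x(\omega) \geq s) \leq c e^{-c s^\varepsilon}$. A union bound over $x \in \Bb_\rho(\lambda n) \cap \omega$ at the threshold $s = (\log n)^{2/\varepsilon}$ yields
\[
\P\!\left(\exists x \in \Bb_\rho(\lambda n)\cap \omega :\ n_x(\omega) \geq (\log n)^{2/\varepsilon}\right) \leq C n^d\, e^{-c(\log n)^2},
\]
which is summable in $n$. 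Borel--Cantelli then gives that almost surely, for $n$ larger than some random $n_0(\omega,\lambda)$, every $x \in \Bb_\rho(\lambda n)\cap \omega$ satisfies $n_x(\omega) \leq (\log n)^{2/\varepsilon} \ll n^2$.

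With this uniform control in hand, the proof concludes quickly. If $|x-y| \geq n^2$ then $\Pp_x(X_{n^2}=y) = 0$ (the random walk on $\omega$ inherits the nearest-neighbour constraint from $\Z^d$) and the bound is trivial. Otherwise both hypotheses $n^2 > n_x(\omega)$ and $n^2 > |x-y|$ of \eqref{a} are satisfied, so \eqref{a} at time $n^2$ produces the claimed estimate, with the constant $C$ absorbing the fixed $C_3, C_4$ of \eqref{a}. The only step that could be called an obstacle is the uniformization, and it is handled cleanly by the interplay of polynomial growth and stretched exponential tails; both inputs are ready to use from the preceding discussion, so no further difficulty arises.
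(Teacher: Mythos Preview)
Your approach is correct and is essentially what the paper intends: the corollary is stated there without proof, as an immediate consequence of Barlow's Gaussian estimates \eqref{a}. Your Borel--Cantelli uniformization over $x\in\Bb_\rho(\lambda n)$ via the stretched-exponential tail \eqref{eq:stretched} is a clean way to make ``$n$ sufficiently large'' uniform in $x$, and is a useful explicit step the paper leaves implicit.

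There is, however, one point you glossed over. Plugging $m=n^2$ into \eqref{a} gives $\Pp_x(X_{n^2}=y)\le C_3\, n^{-d}\exp[-C_4|x-y|^2/n^2]$, with $n^2$ in the denominator of the exponent --- exactly as you wrote. But the corollary as printed has $|x-y|^2/n$, and you then assert that your bound ``is the desired bound''. It is not the same: the bound with $/n$ is strictly stronger, and in fact cannot hold as stated (for $|x-y|$ of order $n$ the heat kernel at time $n^2$ is genuinely of order $n^{-d}$, not $n^{-d}e^{-cn}$). This is evidently a typo in the paper --- every downstream use of \eqref{eq:upbound}, namely \eqref{quenched diff} and the on-diagonal estimate in the proof of Proposition~\ref{continuity}, only requires the $/n^2$ version you actually established. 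You should have flagged the discrepancy rather than silently identifying the two.
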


In both \cite{Bar04,BH09} the results are formulated with $|x-y|$
instead of $\Dd(x,y)$, but by the results of Antal and Pisztora
\cite{AP96}, this is the same.
This immediately implies
\begin{equation}
\Ee_x\bigl[\Dd(X_{n^2},x)^2\bigr]\le
\mathbf{c}_3 n^2\label{quenched diff}
\end{equation}
for some constant $\mathbf{c}_3$ depending on the environment.

The lower bound has some periodicity requirements since
$\Pp_x(X_t=y)=0$ whenever $t + \sum(x_i-y_i)$ is odd.

\begin{corollary}
For every $\lambda<\infty$, every $n\in\mathbb N$ sufficiently large
and every
$x\in B(\lambda n)$,
\[
\label{eq:lowbound}
\Pp_x(X_{n^2}=y)\ge C n^{-d}
\]
for any $y\in\Bb_x(n)$ such that $n^2+\sum
(x_i-y_i)$ is even.
\end{corollary}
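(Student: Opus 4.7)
The plan is to derive this near-diagonal lower bound directly from the Gaussian heat kernel estimates $(GE)$ recalled in \eqref{a}, simply by substituting the right time and length scales and then accounting for the parity of the bipartite walk.

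First I would apply the lower half of \eqref{a} with time parameter $n^2$ rather than $n$. For any $y \in \Bb_x(n)$ we have $|x-y| \le n$, so $|x-y|^2/n^2 \le 1$ and the Gaussian factor is bounded below by $e^{-C_2}$. The polynomial factor $n^{-2 \cdot d/2} = n^{-d}$ then gives
\[
\Pp_x[X_{n^2}=y] + \Pp_x[X_{n^2+1}=y] \;\ge\; \frac{C_1}{n^d}\,e^{-C_2},
\]
provided the hypotheses $n^2 \ge n_x(\omega)$ and $n^2 \ge |x-y|$ of \eqref{a} hold. The second condition is automatic since $|x-y|\le n \le n^2$ for $n \ge 1$.

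Next I need the first condition to hold uniformly for $x \in \omega \cap \Bb_\rho(\lambda n)$, which requires controlling $\max_{x \in \omega \cap \Bb_\rho(\lambda n)} n_x(\omega)$. Here I would invoke the stretched exponential tail estimate \eqref{eq:stretched}: combined with a union bound over the (at most $C(\lambda n)^d$) vertices in $\Bb_\rho(\lambda n)$, Borel--Cantelli gives that almost surely, for all $n$ sufficiently large,
\[
\max_{x \in \omega \cap \Bb_\rho(\lambda n)} n_x(\omega) \;\le\; (\log n)^{2/\varepsilon} \;\ll\; n^2 .
\]
Thus for such $n$ the hypothesis $n^2 \ge n_x(\omega)$ holds for every $x \in \omega \cap \Bb_\rho(\lambda n)$ simultaneously.

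Finally, I would peel off the parity issue. Since $\omega \subset \Z^d$ and $\Z^d$ is bipartite, the random walk on $\omega$ satisfies $\Pp_x[X_m = y] = 0$ whenever $m + \sum_i (x_i - y_i)$ is odd. Hence, in the sum on the left-hand side of the bound displayed above, exactly one of the two terms vanishes depending on the parity of $n^2 + \sum_i(x_i-y_i)$. When this parity is even we have $\Pp_x[X_{n^2+1}=y] = 0$, and all of the mass lies in the first term, giving
\[
\Pp_x[X_{n^2}=y] \;\ge\; \frac{C_1 e^{-C_2}}{n^d},
\]
which is precisely the assertion with $C := C_1 e^{-C_2}$. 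The only subtle point in the argument is the uniform control of $n_x(\omega)$, but this is a routine consequence of \eqref{eq:stretched} and the polynomial volume growth of $\Bb_\rho(\lambda n)$; no further cluster geometry is required.
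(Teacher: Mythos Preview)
Your argument is correct and is precisely the derivation the paper has in mind: the paper does not give an explicit proof of this corollary, but simply lists it among the immediate consequences of the Gaussian estimates \eqref{a} and the tail bound \eqref{eq:stretched}, exactly as you have spelled out.
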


%Let us recall the bounds we will be using,
%for the simple random walks: for every $\lambda>0$, for every $x,y\in
%\Bb_\rho(\lambda n)$ and $n$ large enough,
%\begin{equation}\label{az}\frac{C_1}{n^{d/2}}
%\exp\big[-C_2|x-y|^2/n\big]
%1_{\{\Pp_x[X_n=y]>0\}}~\le~\Pp_x[X_n=y]~\le~\frac{C_3}{n^{d/2}}~\exp
%\big[-C_4|x-y|^2/n\big].\end{equation}
%This immediately implies
%We start with a technical lemma which allow to turn estimates at
%$\rho$ into estimates at most points. The main ingredient is
%ergodicity.
%which asserts that $\Delta_n(x,y)$
%(for $x\sim y$) cannot be too large in average on the space (recall
%that $\Delta_n$ is our strengthened version of the total variation
%distance between the distributions of random walks starting from $x$
%and $y$, (\ref{eq:defDeltan})). Note that we already know that it is
%not too large in average on the environment, since the $L^2$-norm is
%bounded.

In the proof we will need in a few places \emph{space ergodicity}. We
start with a lemma that encapsulates this for us.

\begin{lemma}\label{good average}
Let $f(x,y,\omega)$ be some positive translation-invariant variable,
that is, $f(x+s,y+s,\omega+s)=f(x,y,\omega)$, with
$M:=\E f(0,X_1,\omega)<\infty$. Then for every $\lambda>0$
and for almost every environment $\omega$,
there exists $n_0$ such that for all $n>n_0$ and for any $a\in\Bb
_\rho
(\lambda n)$,
\[
\sum_{(x,y)\in E(\Bb_a(
n))}f(x,y,\omega) \nu(x,y)\leq C \cdot M \cdot
n^{d}.
\]
\end{lemma}

%Before starting the proof, observe the following fact. Since the
%infinite cluster of percolation is a subgraph of $\Z^d$, it has
%uniform polynomial growth and $H_n\le C_1\log n$ for every $n$.
%Theorem~\ref{entropy} implies that $\mathbb E[\Delta_n(\rho,X_1)^2]\le
%C_2/n$ for an infinite number of $n$. Using Fatou's lemma, we obtain
%that $\mathbb E[\Delta_\infty(\rho,X_1)^2]<\infty$.

\begin{pf}
We wish to apply the ergodic theorem for
$\Z^d$ actions (see, e.g., \cite{S09}, Theorem~2.6, page 40). We let
the probability space be $\{0,1\}^{\Z^d}$ with the product measure and
let the probability preserving maps $T_i$ from the statement of the
theorem to be translations of coordinates. Clearly the $T_i$ commute.
Further, each $T_i$ has only trivial invariant subsets---indeed, if $A$ is invariant under some $T_i$, then we can $\varepsilon
$-approximate $A$
by an event $B$ depending only on finitely many coordinates and then
apply $T_i^n$ for $n$ sufficiently large so that $B$ and $T_i^nB$ are
independent. We get that $|\P(A)-\P(A)^2|\le3\varepsilon$. Since
$\varepsilon$ is
arbitrary, $\P(A)$ must be equal to $0$ or $1$.

Fix $v$ to be one of the $d$ vectors of the standard basis of $\Z^d$
and define a function $F\dvtx \{0,1\}^{\Z^d}\to\R$ by
\[
F(\xi)= %
\cases{f\bigl(\rho,v,\omega(\xi)\bigr), &\quad $\rho,v\in\omega(
\xi),$\vspace*{2pt}
\cr
0, &\quad $\mbox{otherwise},$} %
\]
where $\omega(\xi)$ is the infinite cluster (possibly equal to the
empty set). The ergodic theorem implies
%\[
%\lim_{n\to\infty}\frac1{(2n)^d}\sum_{0\le i_1,\ldots,i_d\le2n}
%F(T_1^{i_1}\cdots T_d^{i_d}\xi)= \E(F) \mbox{almost surely}
%\]
%(the theorem in \cite{S09} is formulated for $\Z_+^d$ actions).
%Divide
%$[-n-1,n]^d$ into $2^d$ copies of $[0,n]^d$ and using the ergodic
%theorem $2^d$ times for all choices of $\pm T_i$ (and to fixed
%translations of $x$) gives the needed $\Z^d$ version:
\[
\lim_{n\to\infty}\frac{1}{(2n)^d}\sum_{-n-1\le i_1,\ldots,i_d\le n}
F\bigl(T_1^{i_1}\cdots T_d^{i_d}\xi
\bigr)= \E(F) \qquad\mbox{almost surely}
\]
(the theorem in \cite{S09} is formulated for $\Z_+^d$ actions, but the
two-sided version above follows from the one-sided version by applying
the one-sided result $2^d$ times, for each choice of $T_1^{\pm
1},\ldots,T_d^{\pm1}$,
and combining the results).
Summing the above over all $v$ in the standard basis element $v$
enables us to go from $F$ to $f$ and to obtain
\[
\lim_{n\to\infty}\frac{1}{|E(Q_n)|}\sum_{(x,y)\in
E(Q_n))}f(x,y,
\omega )\le CM,
\]
where $Q_n=[-n-1,n]^d$ and $C$ is some universal constant.

Let us now generalize this to cubes centered around an arbitrary $a\in
B_\rho(\lambda n)$. Fix some $N,C'$ sufficiently large such that
\[
\P \biggl(\exists n>N, \sum_{(x,y)\in E(Q_n)}f(x,y,
\omega)>C'M n^d \biggr)<\mu,
\]
where $\mu$ will be defined in the next paragraph (as a function of
$\lambda$). Define
$\omega$ to be \emph{good} if the event involved in the previous
displayed equation does not happen, and define $b\in\Z^d$
to be \emph{good} if translating $\omega$ by $-b$ gives a good
configuration. Using ergodicity once again,
there exists almost surely $n_0=n_0(\omega)<\infty$ such that for any
$n\ge n_0$,
\[
\bigl|\{\mathrm{good}\ b\in Q_n\}\bigr|>(1-2\mu)|Q_n|.
\]
In particular, the number of sites in $\Bb_\rho(\lambda n)\subset
Q_{\lambda n}$ which are not good is less than $2\mu|Q_{\lambda n}|$
for $n\ge n_0$. If $\mu=\mu(\lambda)$ is chosen sufficiently small and
$n\ge n_0$, then there cannot be any good-free ball of
radius $n$ in $\Bb_\rho(\lambda n)$. Hence, for any $a\in
\Bb_\rho(\lambda n)$, there is a cube $b+Q_{2n}\supset\Bb_a(n)$ centered
around a good point $b$. This implies that for $n\ge\max\{n_0,N\}$,
\[
\sum_{(x,y)\in E(\Bb_a(n))}f(x,y,\omega)\le\sum
_{(x,y)\in
E(b+Q_{2n})}f(x,y,\omega) \le C'M\cdot(2n)^d.
\]
(The assumption $n\ge N$ enables us to use the fact that $b$ is good.)
Adding the terms $\nu(x,y)$ only changes the constant.
\end{pf}

Recall the $h_n$ from the proof sketch on page
\pageref{pg:sketch}. There we defined $h_n(x)=h(nx)/n$ which is
{a priori} only defined on the contracted infinite cluster. For
simplicity let us extend it to all $\R^d$, for example, by extending $h$ to
$\Z^d$ by taking the value at the closest point of the
infinite cluster, and then to $\R^d$ by defining
$h(x)=\sum_{y\in\Z^d}h(y)\phi_y(x)$ where $\phi_y$ is some partition
of unity such that $\supp\phi_y\subset y+[-\frac{2}3,\frac{2}3]^d$.
%dividing each square
%$(n,m)+[0,1]^2$ into two triangles and interpolating on the triangles
%linearly.
Once $h$ is extended to all $\R^d$, so is $h_n$.

\begin{proposition}\label{continuity}
For almost every environment $\omega$, any harmonic function $h$ on
$\omega$ with linear growth satisfies that for every compact $K\subset
\mathbb R^d$, the sequence $(h_n)|_{K}$ is uniformly bounded and
equicontinuous. %(where $h_n=h(n\cdot)/n$).
\end{proposition}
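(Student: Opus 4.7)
The statement is an Arzelà--Ascoli type assertion, so I split the argument into uniform boundedness and equicontinuity of $\{h_n\}$ on any compact $K \subset \R^d$. Uniform boundedness is immediate from linear growth: by hypothesis there is a random constant $C_h(\omega)<\infty$ with $|h(z)|\le C_h(\Dd(\rho,z)+1)$ for all $z\in\omega$, and for the supercritical cluster the graph distance is a.s.\ bounded by a multiple of Euclidean distance far from $\rho$. Hence $|h_n(x)|=n^{-1}|h(nx)|\le C(\omega)|x|+O(1/n)$, uniformly in $x\in K$. The piecewise-linear extension of $h_n$ to $\R^d$ preserves this bound up to a constant.

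For equicontinuity, the plan is to reuse the key inequality \eqref{important} from the entropy argument. Using harmonicity to write $h(nx)=\Ee_{nx}[h(X_{n^2})]$ and similarly at $ny$, \eqref{important} yields
\[
|h(nx)-h(ny)|\le \Delta\bigl(\Pp_{nx}(X_{n^2}\in\cdot),\Pp_{ny}(X_{n^2}\in\cdot)\bigr)\cdot\sqrt{\Ee_{nx}[h^2(X_{n^2})]+\Ee_{ny}[h^2(X_{n^2})]}.
\]
The second factor is $O(n)$: linear growth of $h$ combined with the quenched diffusivity estimate \eqref{quenched diff} gives $\Ee_{nx}[h^2(X_{n^2})]\le C(\omega) n^2$ for $x\in K$ and $n$ large. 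Dividing by $n$ yields $|h_n(x)-h_n(y)|\le C(\omega)\,\Delta\bigl(\Pp_{nx}(X_{n^2}),\Pp_{ny}(X_{n^2})\bigr)$, so everything reduces to bounding this $\Delta$ quantity.

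For the $\Delta$ bound, I would apply the Gaussian heat kernel estimates \eqref{eq:upbound}--\eqref{eq:lowbound}: on the macroscopic bulk both kernels $\pp_{n^2}(nx,\cdot)$ and $\pp_{n^2}(ny,\cdot)$ are comparable to $n^{-d}$, so the denominator $\mu(z)+\nu(z)$ in \eqref{eq:defDelta} is bounded below by $c n^{-d}$; the numerator is the squared difference of two near-Gaussians of width $n$ translated by $|nx-ny|\le \delta n$, and a Taylor expansion (essentially a Gaussian gradient bound) gives $\sum_z(\mu(z)-\nu(z))^2 \le C\delta^2 n^{-d}$. Combining, $\Delta\bigl(\Pp_{nx}(X_{n^2}),\Pp_{ny}(X_{n^2})\bigr)^2 \le C\delta^2$, which furnishes equicontinuity.

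The main technical obstacle is that the heat kernel bounds hold only beyond a random threshold $n_x(\omega)$, so making the estimate uniform in $x\in K$ requires controlling the exceptional set via the stretched exponential tail \eqref{eq:stretched} together with a Borel--Cantelli argument, and handling the parity issue in the lower bound by replacing $\pp_{n^2}$ with $\pp_{n^2}+\pp_{n^2+1}$. An alternative that sidesteps these issues is Barlow's parabolic Harnack inequality \cite{Bar04}, which directly gives a H\"older estimate $|h(a)-h(b)|\le C(|a-b|/n)^\alpha\max_{\Bb_\rho(2n)}|h|$ for $h$ harmonic on $\Bb_\rho(2n)$; combined with linear growth this yields $|h_n(x)-h_n(y)|\le C|x-y|^\alpha$ directly.
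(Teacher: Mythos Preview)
Your uniform boundedness argument is fine. The genuine gap is in the equicontinuity step, specifically in your claimed bound on $\Delta\bigl(\Pp_{nx}(X_{n^2}\in\cdot),\Pp_{ny}(X_{n^2}\in\cdot)\bigr)$. The two-sided Gaussian estimates \eqref{eq:upbound}--\eqref{eq:lowbound} only pin the heat kernel between $c_1 n^{-d}\exp(-C_2|\cdot|^2/n^2)$ and $C_3 n^{-d}\exp(-c_4|\cdot|^2/n^2)$ with \emph{different} constants; they give no control whatsoever on the spatial regularity of $z\mapsto \pp_{n^2}(\cdot,z)$. Within the allowed envelope the kernel could oscillate so that $\pp_{n^2}(nx,z)-\pp_{n^2}(ny,z)$ is of order $n^{-d}$ even when $|x-y|$ is tiny, and then $\Delta$ would be of order $1$, not $\delta$. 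There is no ``Taylor expansion'' available here: a heat kernel \emph{gradient} bound is precisely what Theorem~\ref{heat kernel} of the paper establishes, and its proof requires the entropy machinery, not just the Gaussian sandwich. So your main route is circular as written.

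Your alternative via Barlow's parabolic Harnack inequality is correct and does yield H\"older continuity of harmonic functions, hence equicontinuity of $(h_n)$; that is a legitimate proof, but it invokes a strictly stronger input than the paper uses. The paper's own argument is closer in spirit to your first idea but organised differently: rather than comparing the laws of $X_{n^2}$ from $nx$ and $ny$ directly, it compares both $h(a)$ and $h(b)$ to the average $\overline{h}$ of $h$ over a ball $B$ of radius $2\delta n$ containing both. One runs the walk for a \emph{shorter} time $(\e n)^2$ with $\e\ll\delta$; on the event the walk stays in $B$, the heat kernel upper bound together with the Poincar\'e inequality \PW{} converts $|h(a)-\overline{h}|^2$ into a sum of squared edge-gradients of $h$ over $B$. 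Each such gradient $|h(x)-h(y)|^2$ for $x\sim y$ is then bounded via \eqref{important} by $\cc\,\Delta_\infty(x,y)^2$ (this is where the entropy argument enters, one edge at a time), and the sum of $\Delta_\infty^2$ over edges in $B$ is controlled by ergodicity through Lemma~\ref{good average}. Optimising $\e$ against $\delta$ gives the modulus of continuity. The key conceptual difference from your sketch is that the entropy bound on $\Delta$ is applied at the edge level and then \emph{averaged}, rather than attempted globally between two macroscopically separated starting points.
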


\begin{pf}
Fix a harmonic map $h$ with (at most) linear growth on an environment
$\omega$. There exists $A>0$
such that $|h(x)|\le A|x|$. We only need to prove equicontinuity on
the ball, as this property passes to subsets.
% (other compact sets $K$ work the same).
To do so, we prove that for any $\eta>0$, there
exists $\delta>0$ such that $(h(a)-h(b))^2\le\eta n^2$ for any two
points $a,b\in\Bb_\rho(n)$ at distance $\delta n$ of each other, when
$n$ is large enough (it is easy to see that our procedure for
extending $h$ to $\R^d$ allows to prove the needed estimates only for
$a$ and $b$ in $\omega$). For this reason, we will always assume that $n$
is large enough so that the Poincar\'e inequality \PW{} and the
$d$-dimensional volume growth \eqref{eq:vold} hold true for $\lambda=2$.

Let $\delta,\varepsilon>0$ to be fixed later (think of
$\varepsilon\ll\delta$) and $a,b\in
\Bb_\rho(n)$ with $\Dd(a,b)\le\delta n$. Let $B$ be some ball of
radius $2\delta n$ containing both $\Bb_a(\delta n)$ and $\Bb
_b(\delta
n)$---for example, around the middle point of $[ab]$. Let $\overline
{h}$ be the
average $\frac{1}{\nu(B)}\sum_{x\in B}h(x)\nu(x)$. Since
$|h(a)-h(b)|\le|h(a)-\overline{h}| + |h(b)-\overline{h}|$, it is
enough to estimate these terms. Let us focus on estimating
$|h(a)-\overline{h}|$ (the other term is symmetric).

Set $\mathcal E$ to be the event that $|X_{(\varepsilon n)^2}-a|\ge
\delta n$.
Note that
\[
\mathbf{P}_a(\mathcal E) \le\frac{\Ee_a (|X_{(\varepsilon n)^2}-a|^2 )}{(\delta n)^2} \stackrel{
\mathrm{\scriptsize (\ref{quenched diff})}} {\le} \frac{\mathbf{c}_3(\varepsilon n)^2}{(\delta n)^2} =\mathbf{c}_3 (
\varepsilon/\delta)^2,
\]
where the Markov inequality was used in the first inequality and the
quenched diffusive behavior \eqref{quenched diff} in the second.

Now, we have
\begin{eqnarray*}
\bigl|h(a)-\overline{h}\bigr|^2&\le& \bigl(\Ee_a
\bigl[\bigl|h(X_{(\varepsilon
n)^2})-\overline {h}\bigr| \bigr] \bigr)^2
\\
&\le&2 \bigl(\Ee_a \bigl[\bigl|h(X_{(\varepsilon n)^2})-\overline {h}\bigr|{\mathbf
1}_{\mathcal E} \bigr] \bigr)^2+ 2 \bigl(\Ee_a
\bigl[\bigl|h(X_{(\varepsilon n)^2})-\overline{h}\bigr|{\mathbf 1}_{\mathcal
E^c} \bigr]
\bigr)^2.
\end{eqnarray*}
We first deal with the first term on the right:
\begin{eqnarray*}
\bigl(\Ee_a \bigl[\bigl|h(X_{(\varepsilon n)^2})-\overline{h}\bigr|{\mathbf
1}_{\mathcal
E} \bigr] \bigr)^2 &\le&\Ee_a \bigl[
\bigl(\bigl|h(X_{(\varepsilon n)^2})\bigr|+|\overline{h}| \bigr)^2 \bigr]\cdot
\mathbf{P}_a(\mathcal E)
\\
&\le& \bigl( 2\Ee_a \bigl[h(X_{(\varepsilon n)^2})^2 \bigr]+2
\overline{h}^2 \bigr)\cdot{\mathbf P}_a(\mathcal E)
\\
\mbox{since $h(x)\le A|x|$}\qquad &\le&2A \bigl(\Ee_a
\bigl[|X_{(\varepsilon n)^2}|^2 \bigr]+(1+2\delta )^2n^2
\bigr) \cdot\mathbf{P}_a(\mathcal E)
\\
\mbox{by \eqref{quenched diff}}\qquad &\le&2A\bigl(\mathbf{c}_3\bigl(1+
\varepsilon^2\bigr)+(1+2\delta)^2\bigr)n^2
\cdot{\mathbf c}_3\frac{\varepsilon^2}{\delta^2} =\mathbf{c}_5n^2
\frac{\varepsilon^2}{\delta^2},
\end{eqnarray*}
where Cauchy--Schwarz was used in the first inequality.%, in the third
%the bound $h(x)\le A|x|$, and in the last, \eqref{az}.

For the second term, the heat kernel upper bound \eqref{eq:upbound}
shows that\break
$\Pp_a(X_{(\varepsilon n)^2}=x)\le C_6/(\varepsilon n)^{d}$ for any
$x\in\Bb_\rho(n)$
and $n$ large enough. Therefore,
\begin{eqnarray*}
\bigl(\Ee_a \bigl[\bigl|h(X_{(\varepsilon n)^2})-\overline{h}\bigr|{\mathbf
1}_{\mathcal
E^c} \bigr] \bigr)^2&\le& \Ee_a
\bigl[\bigl|h(X_{(\varepsilon n)^2})-\overline{h}\bigr|^2{\mathbf 1}_{\mathcal
E^c}
\bigr]
\\
%\frac{\cc_3}{(\e n^2)^{d}}\int\int_{\Bb_a(\delta n)\times\Bb_b(\delta
%n)}|h(x)-h(t)|^2d\mu_xd\mu_t\\
&\le& %\frac{\delta^{d/2}}{\e^{d/2}}\frac{\cc_3}{n^{d}}
\frac{C_6}{(\varepsilon n)^{d}} \sum
_{x\in\Bb_a(\delta n)}\bigl\llvert h(x)-\overline{h}\bigr\rrvert ^2
\nu (x)
\\
&\le& %\frac{\delta^{d/2}}{\e^{d/2}}\frac{\cc_3}{n^{d}}
\frac{C_6}{(\varepsilon n)^{d}} \sum_{x\in B}\bigl
\llvert h(x)-\overline{h}\bigr\rrvert ^2 \nu(x).
\end{eqnarray*}
%
%where we have used \VdW to in the second inequality.
Poincar\'e's inequality implies
\[
\Ee_a \bigl[\bigl|h(X_{(\varepsilon n)^2})-\overline{h}\bigr|{\mathbf
1}_{\mathcal
E^c} \bigr]^2\le\frac{\CP C_6}{(\varepsilon n)^{d}}(2\delta
n)^2 \sum_{(x,y)\in E(B')}\bigl|h(x)-h(y)\bigr|^2
\nu(x,y),
\]
where $B'$ is the ball with same center as $B$ and radius $4\delta n$.

Now, the quantity $\Delta_n$ introduced in \eqref{main inequality}
controls the gradient of a harmonic function. Indeed, the same
reasoning as the one used to derive \eqref{crucial inequality} implies that
\[
\bigl|h(x)-h(y)\bigr|^2\le \bigl(\Ee_x\bigl[\bigl|h(X_n)\bigr|^2
\bigr]+\Ee_y\bigl[\bigl|h(X_{n-1})\bigr|^2\bigr] \bigr)
\Delta_n(x,y)^2%\\
%&\le A^2\Big(\Ee_x[|X_n|^2]+\Ee_y[|X_{n-1}|^2]\Big)\Delta_n(x,y)^2
\]
for every $n$. Using the bound $|h(z)|\le A|z|$, diffusivity and taking
the liminf, we obtain
\[
\bigl|h(x)-h(y)\bigr|^2\le\mathbf{c}_7 \liminf
_{n\to\infty} n \Delta_n(x,y)^2,
\]
where $\mathbf{c}_7$ does not depend on the points $x,y$ (though it does depend
on $h$ through~$A$). Denote this $\liminf$ by
$\Delta_\infty(x,y)^2$. We get
\begin{eqnarray*}
\bigl(\Ee_a \bigl[\bigl|h(X_{(\varepsilon n)^2})-\overline{h}\bigr|{\mathbf
1}_{\mathcal
E^c} \bigr] \bigr)^2\le \frac{\delta^{2}}{\varepsilon^{d}}
\frac{\mathbf{c}_8}{n^{d-2}}\sum_{(x,y)\in
E(B)}\Delta_\infty(x,y)^2
\nu(x,y).
\end{eqnarray*}
We next note that $\E\Delta_\infty(\rho,X_1)^2<\infty$. Indeed, the
infinite cluster of percolation is a subgraph of $\Z^d$, it has
uniform polynomial growth and $H_n\le C_1\log n$ for every
$n$. Theorem~\ref{entropy} implies that $\mathbb
E[\Delta_n(\rho,X_1)^2]\le C_2/n$ for an infinite number of $n$. Using
Fatou's lemma, we obtain that $\mathbb
E[\Delta_\infty(\rho,X_1)^2]<\infty$. Thus we may use Lemma~\ref{good
average} for the function $f=\Delta_\infty^2$ and get (with the fact
that $B'$ has radius $4\delta n$),
\[
\Ee_a \bigl[\bigl |h(X_{(\varepsilon n)^2})-\overline {h} \bigr|{\mathbf
1}_{\mathcal E^c} \bigr]^2\le\mathbf{c}_{9}
\frac{\delta
^{d+2}}{\varepsilon^{d}}n^2.
\]
Putting together the estimates for the two terms, we obtain
\[
\bigl(h(a)-h(b) \bigr)^2\le n^2 \biggl(
\mathbf{c}_3\frac{\varepsilon
^2}{\delta^2}+\mathbf{c} _{9}
\frac{\delta^{d+2}}{\varepsilon^{d}} \biggr),
\]
which implies the claim provided $\delta=\varepsilon^{(d+1)/(d+2)}$.
\end{pf}

\begin{lemma}\label{lem:subseq_linear}For almost every environment
$\omega$, for any harmonic function $h$ on $\omega$ with linear
growth, any
subsequential limit of $h_n$ is linear.
\end{lemma}

\begin{pf}Let $n_k$ be a sequence such that $h_{n_k}$ converges
uniformly on compact subsets of $\mathbb R^d$, and denote the limit
by $\ell$. Let now $B_t$ be a Brownian motion with variance $\sigma
(d)$, where
$\sigma(d)$ comes from the invariance principle for random walk on
$\omega$, see page \pageref{sec:linear growth}. Our first goal is to
derive a mean-value property anchored at the origin. Namely, we wish to
prove that
\begin{equation}
\label{eq:from0} \Ee_0 \bigl[\ell(B_t)\bigr]=\ell(0)\qquad
\mbox{for any }t>0.
\end{equation}
To see (\ref{eq:from0}) note that $h$ is harmonic and hence
$\Ee_\rho[h(X_t)]=h(\rho)$ or equivalently
\[
\Ee_0\bigl[h_{n}(X_{n^2t}/n)
\bigr]=h_{n}(0).
\]
The central limit theorem (Theorem~\ref{corrector}) allows to control
$h(X_t)$ in a ball of radius
$\approx\sqrt{t}$. Namely, because $X_{n^2t}/n$ converges weakly to
$B_t$, and because $\ell$ is continuous (as a locally uniform limit of
the $h_{n_k}$), for any $K>0$,
\[
\bigl\llvert \Ee_0 \bigl[\ell(X_{n^2t}/n)\cdot{
\mathbf1}_{\{|X_{n^2
t}/n|<K\}
} \bigr]- \Ee_0 \bigl[\ell(B_t)
\cdot{\mathbf1}_{\{|B_t|<K\}} \bigr]\bigr\rrvert \to 0,
\]
where the convergence
is as $n\to\infty$. The Gaussian bounds (\ref{eq:upbound}) and the linear
bounds on $h_{n}$ and $\ell$ allow to control $h(X_t)$ outside that ball,
\[
\bigl\llvert \Ee_0 \bigl[h_n(X_{n^2t}/n)\cdot{
\mathbf1}_{\{|X_{n^2t}/n|\ge
K\}} \bigr]\bigr\rrvert \le\varepsilon(K)\qquad \mbox{for any }n
\mbox{ sufficiently large},
\]
where $\varepsilon(K)\to0$ as $K\to\infty$. A similar estimate
holds for $\ell
(B_t)$. This shows (\ref{eq:from0}).

We now extend (\ref{eq:from0}) from 0 to all points $u$ using Lemma~\ref{good average}. Let us recall that weak convergence is
metrizable. For example, it is equivalent to convergence in the
L\'evy--Prokhorov distance metric (e.g., \cite{D02}, Section~11.3), especially
Theorem~11.3.3. We will not need any property of the
L\'evy--Prokhorov metric except that it is equivalent to weak
convergence.

Now fix $t$, and fix also some $\varepsilon$ and some
$n_0$. Consider a vertex $x$ in the cluster to be good if the Gaussian
estimates (\ref{eq:upbound}) hold for all $n>n_0$ and if the
L\'evy--Prokhorov distance between
$X_{n^2t}/n$ (started from $x$) and $B_t$ (started from $x/n$) is
smaller than $\varepsilon$, again for all $n>n_0$. If $n_0$ is sufficiently
large (depending on $t$ and $\varepsilon$), the
probability of $x$ being good will be larger than $1-\varepsilon$. For
the Gaussian
estimates this follows directly from (\ref{eq:upbound}) while for the
L\'evy--Prokhorov distance this follows from the equivalence of
L\'evy--Prokhorov convergence and weak convergence. Fix therefore $n_0$
to satisfy this property.

Now use Lemma~\ref{good average} with the function $f$ being
$f(x,y)={\mathbf1}_{\{x\,\mathrm{is\,bad}\}}$ (the $y$ variable is simply
ignored) and with some arbitrary $\lambda$. We get that for
sufficiently large~$n$, the number of bad $x$ in $\Bb_\rho(\lambda n)$
is bounded by $C(\lambda)n^d\P(0$ is bad$)\le C(\lambda
)\varepsilon n^d$. Define
\[
B_n:=\bigl\{u\in\R^d\dvtx |u|\le\lambda, un\mbox{ is
bad}\bigr\}
\]
(where as usual we in fact take the point of the infinite cluster
closest to $un$ and check whether it is bad).
Since the measure of $B_n$ is smaller than $ C\varepsilon$, we see
that, except
for a set
of measure smaller than $ C\varepsilon$, every $u\in\R^d$ with
$|u|\le\lambda$ satisfies
that $un_k$ is good for infinitely many $n_k$ (it does not matter that
$n_k$ itself depends on the environment here). But $\varepsilon$
(both for the error and for the measure of the bad set) was
arbitrary. Taking $\varepsilon\to0$ and then $\lambda\to\infty$ we see
that for almost every $u\in\R^d$ there is a sequence $ n_k'=n_k'(u)$ (a
subsequence of $n_k$) such that:
\begin{longlist}[1.]
\item[1.] The Gaussian estimates hold for $X_{n_k'}$ started from $un_k'$.
\item[2.] The L\'evy--Prokhorov distance between $X_{(n_k')^2 t}/n'_k$
started from $un'_k$
and Brownian motion started from $u$ goes to zero.
\end{longlist}
Using again the equivalence of L\'evy--Prokhorov convergence and weak
convergence we get that random walk started from $un'_k$ converges to
Brownian motion started from $u$. We can now repeat the argument that
led to \eqref{eq:from0} literally and get
\[
\Ee_u\bigl[\ell(B_t)\bigr]=\ell(u)
\]
for almost every $u$. Since $\ell$ is continuous, this in fact holds
everywhere. Since $t$ was arbitrary, $\ell(B_t)$ is a continuous
martingale, from any starting point.

The lemma is now proved. Using the strong Markov property we get
that $\ell(u)$ is equal to its average over a sphere of arbitrary
radius around $u$, in other words, we have established the mean-value
property hence $\ell$ is (continuously) harmonic and has a linear
bound. It is well known that harmonic functions with at most linear
growth on $\mathbb R^d$ are the affine maps (take the partial
derivative along one direction, it is a bounded harmonic map on
$\mathbb R^d$, and thus a constant map).
\end{pf}

\begin{pf*}{Proof of Theorem~\ref{dimension linear}}
Let $d\ge2$. The constant functions on $\omega$ are obviously
harmonic. The projections of $x+\chi(x)$ where $\chi$ is the corrector
(see Theorem~\ref{corrector}) on each coordinate provide us with $d$
linearly independent functions. These functions have linear
growth. Therefore, the space of linear growth harmonic functions is at
least $(d+1)$-dimensional. Thus we need to show that any harmonic
function of linear growth is of the form $h(x+\chi(x))$.

The first step is to apply Theorem~\ref{entropy} and get that $\mathbb
E(\Delta_n(\rho,X_1)^2)\le2(H_n-H_{n-1})$ and in particular is $\le
C/n$ on a subsequence. Hence, by Fatou's lemma, there is a
random subsequence $n_k$ such that $\Ee_\rho[\Delta_{n_k}(\rho
,X_1)^2]\le C/n_k$.

Now, let $h$ be a harmonic function on $\omega$ with (at most) linear
growth and with $h(0)=0$. Proposition~\ref{continuity} allows us to
extract a sequence $m_k$ such that $(h_{m_k})$ converges uniformly on any
compact subset of $\mathbb R^d$ to a continuous function $\tilde
h$, and further one may take $m_k$ to be a subsequence of any given
sequence, so we may assume $m_k$ is a subsequence of $\lfloor
n_{k}^{1/2}\rfloor$. %For simplicity, we forget about the subsequence
%$(n_k)$ and assume
%that the sequence is converging.
By Lemma~\ref{lem:subseq_linear} $\tilde h$ is linear. We get that,
$f(x):=h(x)-\tilde h(x+\chi(x))$ is a harmonic function on $\omega$
with the
following additional property: for every $\varepsilon>0$ there exists
$k_0\in
\mathbb N$ such that for all $k>k_0$,
\[
\bigl|f(x)\bigr|\le\varepsilon m_k \qquad\mbox{for any }x\in\omega\ \mathrm{with}\
\Dd (x,\rho)<\frac{1}{\varepsilon}m_k.
\]
that is, it is sublinear on a sequence of (space) scales. A simple
calculation with the Gaussian upper bounds \eqref{eq:upbound} and the
fact that $h$ has a linear bound shows
that it is also sublinear on a sequence of time scales, that is,
\begin{equation}
\label{eq:mk2 not an ad} \Ee_\rho \bigl[f(X_n)^2 \bigr]
\le\varepsilon n\qquad \forall n\in\bigl[\tfrac12 m_k^2,2m_k^2
\bigr], k>k_0'.
\end{equation}
Since the $m_k$ were approximate square roots of a subsequence of the
$n_k$, we may find a subsequence $n_k'$ of $n_k$ for which
$\Ee[f(X_{n_k'})^2]\le\varepsilon n_k'$.

We now repeat the argument of Theorem~\ref{no sublinear harmonic}:
Equation \eqref{crucial inequality} still holds for every~$n$:
\[
\Ee_\rho \bigl|f(\rho)-f(X_1) \bigr|\le \sqrt{2\Ee_\rho
\bigl[\Delta_n(\rho,X_1)^2 \bigr]
\Ee_\rho\bigl[f^2(X_n)\bigr]}.
\]
For our $n_k'$ we have
$\Ee[\Delta^2]\le C/n_k'$, and with \eqref{eq:mk2 not an ad} we get
$\Ee_\rho|f(\rho)-f(X_1)|\le\sqrt{C\epsilon}$. Since $\epsilon$ was
arbitrary, $f$
%By Corollary~\ref{cor:subsequencescales}, it
must be constant. Since
$f(0)=0$ that constant is zero and $h=\tilde h(x+\chi(x))$. Therefore,
any harmonic function with growth at most linear and equal to $0$ at 0
belongs to a vector space of dimension $d$ and the result follows.
\end{pf*}

A natural extension of the supercritical bond percolation setting is
to look at random environments on $\Z^d$, such as the random
conductance model.
See \cite{BD10,BBHK08,BP07,SS04}
for the existence of the corrector in different cases of this model.
Similar results can probably be obtained in this setting.

\section{Heat kernel derivative estimates}\label{sec:heat kernel}

Our purpose in this section is to prove Theorem~\ref{heat kernel}
which gives an upper bound for the (discrete) derivative of the heat
kernel, $\mathbf{p}_n(x,y)-\mathbf{p}_{n-1}(x',y)$, for $x \sim x'$, where
$\mathbf{p}_n(x,y):=\Pp_x(X_n=y)$.
%Recall that we denote the lazy random walk

%on the graph by $\tilde X_n$ and that we denote.
%It is straightforward to see that $(G,\rho,\mu)$ is stationary if and
%only if
%$(G,\rho,\mu)$ and $(G,\tilde X_1,\mu)$ have the same law. In this
%section, we set

We start with a lemma true on any graph. It relates the infinity norm
of the gradient of the heat kernel to the infinity norm of the heat
kernel and the entropy.

\begin{lemma}
Let $G$ be a graph of maximal degree $d$. Then for any $x, x',y\in G$
with $x\sim x'$,
\begin{eqnarray}\qquad
\label{zz} &&\bigl(\mathbf{p}_{2n}(x,y)-\mathbf{p}_{2n-1}
\bigl(x',y\bigr) \bigr)^2
\nonumber
\\[-8pt]
\\[-8pt]
\nonumber
&&\qquad\le4d(d+1) \cdot\Delta_n\bigl(x,x'\bigr)^2
\cdot\mathop{\max_{a,b\in\Bb
_x(2n)\dvtx }}_{ \Dd(a,b) \geq\Dd(x,y)/2}\mathbf{p}_{n}(a,b)
\cdot\max_{a,b\in\Bb
_x(2n)}\mathbf{p}_n(a,b),
\end{eqnarray}
%
%where $\Vert Y\Vert _\infty$ is the infinity norm of $Y$.
where $\Delta_n$ is defined in \eqref{eq:defDeltan}.
\end{lemma}

\begin{pf}
Markov's property gives that
\[
\mathbf{p}_{2n}(x,y)-\mathbf{p}_{2n-1}\bigl(x',y
\bigr)=\sum_{a\in G}\bigl({\mathbf p}_n(x,a)-
\mathbf{p} _{n-1}\bigl(x',a\bigr)\bigr)\mathbf{p}_{n}(a,y).
\]
Let us split the sum on $a\in G$ into two sums $I+\mathit{II}$, where $I$ is
the sum over $a\in\Bb_x(\Dd(x,y)/2)$, and $\mathit{II}$ on the remaining
$a$. Using Cauchy--Schwarz we can write
\[
I^2\le \biggl(\sum_{a\in
\Bb_x(\Dd(x,y)/2)}\bigl(
\mathbf{p}_{n}(x,a)-\mathbf{p}_{n-1}\bigl(x',a
\bigr)\bigr)^2 \biggr) \biggl(\sum_{a\in\Bb_x(\Dd(x,y)/2)}
\mathbf{p}_{n}(a,y)^2 \biggr).
\]
For the first term, bound the denominator in the definition of
$\Delta_n$ by its maximum and get
\begin{eqnarray*}
&&\sum_{a\in
\Bb_x(\Dd(x,y)/2)}\bigl(\mathbf{p}_{n}(x,a)-
\mathbf{p}_{n-1}\bigl(x',a\bigr)\bigr)^2\\
&&\qquad\le
\Delta_n\bigl(x,x'\bigr)^2 \cdot\max
_{a\in\Bb_x(\Dd(x,y)/2) }\bigl\{\mathbf{p}_{n}(x,a) +
\mathbf{p}_{n-1}\bigl(x',a\bigr)\bigr\}.
\end{eqnarray*}
For the second term write
\begin{eqnarray*}
\sum_{a\in\Bb_x(\Dd(x,y)/2)}\mathbf{p}_{n}(a,y)^2&
\le& \Bigl(\max_{a\in\Bb_x(\Dd(x,y)/2)}\mathbf{p}_{n}(a,y) \Bigr)\cdot
\biggl(\sum_{a\in\Bb
_x(\Dd(x,y)/2)}\mathbf{p}_{n}(a,y)
\biggr)
\\
&\le& \Bigl(\max_{a\in\Bb_x(\Dd(x,y)/2)}\mathbf{p}_{n}(a,y) \Bigr)
\cdot \biggl(\sum_{a\in\Bb_x(\Dd(x,y)/2)}d\cdot\mathbf{p}_{n}(y,a)
\biggr)
\\
&\le& d\cdot \Bigl(\max_{a\in
\Bb_x(\Dd(x,y)/2)}\mathbf{p}_{n}(a,y)
\Bigr).
\end{eqnarray*}
Together we get
\begin{eqnarray}\label{qq}
I^2&\le& d\cdot\Delta_n\bigl(x,x'
\bigr)^2 \cdot\max_{a\in\Bb_x(\Dd(x,y)/2)
}\bigl\{\mathbf{p}
_{n}(x,a) +\mathbf{p}_{n-1}\bigl(x',a\bigr)\bigr
\}
\nonumber
\\[-8pt]
\\[-8pt]
\nonumber
&&{}\times \max_{a\in
\Bb_x(\Dd(x,y)/2)}\mathbf{p}_{n}(a,y).
\end{eqnarray}
Now, the second maximum in the right-hand side of \eqref{qq} is a
maximum on a smaller set than the first maximum in \eqref{zz} [note
that points in $\Bb_x(\Dd(x,y)/2)$ are at distance larger than
$\Dd(x,y)/2$ from $y$]. Similarly, the first maximum is smaller than $(1+d)$
times the second maximum of \eqref{zz}. Therefore, the product of
maxima is smaller than
\[
(d+1)\cdot\mathop{\max_{a,b\in
\Bb_x(2n)\dvtx }}_{\Dd(a,b) \geq\Dd(x,y)/2}
\mathbf{p}_{n}(a,b)\cdot\max_{a,b\in
\Bb_x(2n)}
\mathbf{p}_n(a,b).
\]
The estimate for $\mathit{II}$ is similar:
\[
%&\left(\sum_{a\notin\Bb_y(\Dd(x,y)/2)}(\pp_{n}(y,a)-\pp_{n-1}(z,a))^2
%\right)\left(\sum_{a\notin\Bb_y(\Dd(x,y)/2)}\pp_{n}(a,x)^2\right)
%\nonumber\\
%&\le~\left(\sum_{y\in G}\frac{(\pp_{n}(\rho,y)-\pp_{n-1}(\tilde
%X_1,y))^2}{\pp_{n}(\rho,y)}\right)
%\big(\max_{y\notin\Bb_\rho(\Dd(x,\rho)/2)}\pp_{n}(\rho,y)\big)\big(
%\max_{y\in G}\pp_{n}(y,x)\big)
\mathit{II}^2
\leq d\cdot\Delta_n\bigl(x,x'\bigr)^2 \cdot
\max_{a\notin\Bb_x(\Dd(x,y)/2)
}\bigl\{\mathbf{p}_{n}(x,a) +
\mathbf{p}_{n-1}\bigl(x',a\bigr)\bigr\} \cdot\max
_{a\notin\Bb_x(\Dd(x,y)/2)}\mathbf{p}_{n}(a,y).
\]
It is easy to obtain the same bound again, except the estimates are
reversed ({i.e.}, what was bounded by the first term before is now
bounded by the second term). We sum up:
\begin{eqnarray*}
\bigl(\mathbf{p}_{2n}(x,y)-\mathbf{p}_{2n-1}
\bigl(x',y\bigr) \bigr)^2 &=& (I+\mathit{II})^2\le 2
\bigl(I^2+\mathit{II}^2\bigr)
\\
&\le&4d(d+1)\cdot\Delta_n\bigl(x,x'\bigr)^2
\cdot\mathop{\max_{a,b\in\Bb
_x(2n)\dvtx }}_{\Dd(a,b) \geq
\Dd(x,y)/2}\mathbf{p}_{n}(a,b)\\
&&{}\times\max_{a,b\in\Bb_x(2n)}\mathbf{p}_n(a,b). %\qedhere
\end{eqnarray*}
\upqed\end{pf}

In Section~\ref{sec:entropy} it was always enough to discuss behavior
(say of $H_n-H_{n-1}$) on a sequence $n_k$. Here it is no longer enough
and we need an estimate that holds for all $n$. Hence we prove:

%A final tool for the proof of Theorem~\ref{dimension linear} is a
%slight strengthening of Theorem~\ref{no sublinear harmonic} for the
%case of percolation. Theorem~\ref{no sublinear harmonic} required the
%function to be sublinear at every scale, but here it will be important
%to relax this assumption to work at only an (arbitrarily sparse)
%sequence of scales.

\begin{lemma}\label{lem:alln}For supercritical percolation,
$H_n-H_{n-1}\le C/n$ for
every $n$, where $C$ is a constant depending only on $d$ and $p$.
\end{lemma}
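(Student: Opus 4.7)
The strategy is to establish the matching two-sided estimate $H_n = \tfrac{d}{2}\log n + O(1)$ (with constants depending only on $d$ and $p$) and then combine it with the monotonicity of the increments from Corollary \ref{cor:decreasing}. Writing $a_n := H_n - H_{n-1}$, the fact that $(a_n)$ is non-increasing gives, for every $n \ge 2$,
\[
\lceil n/2\rceil\,a_n \;\le\; \sum_{k=\lfloor n/2\rfloor + 1}^{n} a_k \;=\; H_n - H_{\lfloor n/2\rfloor},
\]
so it suffices to bound $H_n - H_{\lfloor n/2\rfloor}$ uniformly in $n$ by a constant depending only on $d,p$.

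For the upper bound on $H_n$, I would exploit the quenched Gaussian lower bound \eqref{eq:lowbound}: on the event $\{n > n_\rho(\omega)\}$ we have $\pp_n(\rho,x) \ge c\,n^{-d/2}\exp(-C|x-\rho|^2/n)$ for every $x$ with $|x-\rho|\le n$ and correct parity. The walker's position $X_n$ automatically has correct parity and lies in $\Bb_\rho(n)$, so writing $\Hh_n(\omega) = \Ee_\rho[\log(1/\pp_n(\rho,X_n))]$ and applying this bound gives
\[
\Hh_n(\omega) \;\le\; \tfrac{d}{2}\log n + C\,\Ee_\rho[|X_n-\rho|^2]/n + O(1)
\]
on this event. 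On the complement $\{n \le n_\rho(\omega)\}$ I use the deterministic crude bound $\Hh_n(\omega) \le \log|\Bb_\rho(n)| \le d\log n + O(1)$ (the percolation cluster is a subgraph of $\Z^d$). Taking expectations, the stretched exponential tail \eqref{eq:stretched} of $n_\rho$ kills the contribution from the bad event, and annealed diffusivity $\E[|X_n-\rho|^2] \le Cn$ (which \SDB{} provides for supercritical percolation) controls the main term, yielding $H_n \le \tfrac{d}{2}\log n + C$.

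For the matching lower bound, I would use the Gaussian upper bound \eqref{eq:upbound}: on $\{n > n_\rho(\omega)\}$ we have $\pp_n(\rho,x) \le C\,n^{-d/2}$ uniformly in $x$, so $\log(1/\pp_n(\rho,X_n)) \ge \tfrac{d}{2}\log n - O(1)$ and hence $\Hh_n(\omega) \ge \tfrac{d}{2}\log n - O(1)$ on the good event; averaging and using \eqref{eq:stretched} again gives $H_n \ge \tfrac{d}{2}\log n - O(1)$. Combining both bounds yields $H_n - H_{\lfloor n/2\rfloor} = O(1)$, uniformly in $n$ large enough, and together with the monotonicity step this gives $a_n \le C/n$ for all $n$ exceeding some threshold $n_0(d,p)$. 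For the finitely many remaining values of $n$ the trivial bound $a_n \le a_1 = H_1 \le \log(2d)$ (since $\rho$ has degree at most $2d$ in $\Z^d$) is absorbed by enlarging $C$ by a factor of at most $n_0$. The only delicate point is handling the random threshold $n_\rho(\omega)$ in the annealed averaging, but the stretched exponential tail in \eqref{eq:stretched} is more than enough to swallow any polylogarithmic contribution coming from the bad event.
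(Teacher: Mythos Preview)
Your proof is correct and follows essentially the same approach as the paper: derive the two-sided estimate $H_n = \tfrac{d}{2}\log n + O(1)$ from the Gaussian heat-kernel bounds \eqref{a} (handling the bad event via the crude bound $\Hh_n \le d\log(2n)$ together with the stretched-exponential tail \eqref{eq:stretched}), and then combine with the monotonicity of increments from Corollary~\ref{cor:decreasing}. The only cosmetic differences are that the paper states the quenched estimate $\Hh_n = \tfrac{d}{2}\log n + O(1)$ directly (reading the needed second-moment bound from the Gaussian upper bound rather than from annealed \SDB{}), and that the full off-diagonal lower bound you invoke is in \eqref{a} rather than in the near-diagonal statement \eqref{eq:lowbound}.
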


%One may compare this lemma to Section~\ref{sec:entropy} where the
%inequality $H_n-H_{n-1}\le C/n$ was shown to hold for a
%subsequence of $n$. Here, we need the claim for all $n$.

%
\begin{pf} The heat kernel estimates \eqref{a} show, after a little
calculation, that
\begin{equation}
\label{yy} \Hh_n=\frac{d}2\log n+O(1) \qquad\forall
n>n_0(\omega).
\end{equation}
For $n\le n_0(\omega)$ we can use a much rougher bound, say $\Hh_n\le
d\log(2n)$ which follows from the fact that for any cluster $\omega$
the distribution of $R_n$ is supported on the cube $\{-n,\ldots,n\}^d$
and any measure has entropy smaller than the entropy of the uniform
measure on its support. Since $n_0(\omega)$ has a stretched
exponential tail, we can integrate over the environment and get that
$H_n=\frac{d}2\log n+O(1)$. This means that $H_{n}-H_{n/2}\le C$ for some $C$. Using
the fact that $H_n-H_{n-1}$ is decreasing (Corollary~\ref{cor:decreasing} on page \pageref{cor:decreasing}) proves the lemma.
\end{pf}

\begin{pf*}{Proof of Theorem~\ref{heat kernel}}
As before, percolation can be seen as a stationary random graph, and it
is sufficient to prove
\[
\mathbb E \bigl(\bigl(\mathbf{p}_{2n}(\rho,x)-\mathbf{p}_{2n-1}(X_1,x)
\bigr)^2\cdot {\mathbf 1}_{\{x\in\omega\}} \bigr)\le\frac{C'_3}{n^{d+1}}
\exp\bigl(-C'_4\Dd (x,\rho)^2/n\bigr),
\]
where $C'_3$ and $C'_4$ depend only on $d$ and the percolation
probability $p$.

Again we use the variables $n_y(\omega)$ from
\eqref{a} and \eqref{eq:stretched}. Take $\varepsilon$ to be given
by the
stretched exponential bound \eqref{eq:stretched} for
$n_y(\omega)$. Note that we can restrict
ourselves to $|x|\le n^{1/2+\varepsilon/3}$, since in the regime
$|x|\ge
n^{1/2+\varepsilon/3}$, the heat kernel decreases fast enough so that one
can tune the constant $C'_4$ in order to obtain the result
for free. Fix therefore $|x|\le n^{1/2+\varepsilon/3}$.
Let $N(\omega) = \max\{ n_y(\omega) \dvtx y \in\Bb_{\rho}(n) \}$.
The Gaussian estimates \eqref{a} imply that for a.e. environment
$\omega$ such that $x\in\omega$,
whenever $n\geq N(\omega)$, we have
\begin{eqnarray}
\label{xy} \mathop{\max_{a,b\in\Bb_\rho(2n)\dvtx }}_{ \Dd(a,b)>\Dd(\rho,x)/2}
\mathbf{p}
_n(a,b)& \leq&\frac{C_3}{n^{d/2}}\exp \bigl[-C_4\Dd(x,
\rho)^2/n \bigr] \quad\mbox{and}
\nonumber
\\[-8pt]
\\[-8pt]
\nonumber
 \max_{a,b\in\Bb_\rho(2n)}
\mathbf{p}_n(a,b)& \leq&\frac{C_3}{n^{d/2}}.
\end{eqnarray}
%
%Taking $y=\rho$ and $z=\tilde X_1$ and
Averaging \eqref{zz} on the environments satisfying $N(\omega)\le n$
(for which we have \eqref{xy}), we find
\begin{eqnarray*}
&&\mathbb E \bigl[\bigl(\mathbf{p}_{2n}(\rho,x) -\mathbf{p}_{2n-1}(
X_1,x)\bigr)^2\cdot{\mathbf1}_{\{x\in\omega\}}{
\mathbf1}_{\{N(\omega)
\le n\}} \bigr]
\\
&&\qquad
 \le4d(d+1)\cdot\mathbb E \bigl[\Delta_n(\rho,X_1)^2
\bigr]\cdot \frac
{C_3^2}{n^{d}}\exp \bigl[-C_4\Dd(x,
\rho)^2/n \bigr].
\end{eqnarray*}
%
%Now, note that for every $n>0$, $H(\tilde X_n)-H(\tilde X_{n-1})=
%\mathbb E[\Hh(\tilde X_1|\tilde X_n)]-H(\tilde X_1)$. Since
%$$\Hh(\tilde X_1|\tilde X_n)=~\Hh(\tilde X_1|\tilde X_n,\tilde
%X_{n+1})~\le~\Hh(\tilde X_1|\tilde X_{n+1}),$$
%the sequence $(H(\tilde X_n)-H(\tilde X_{n-1}))_n$ is decreasing.
We now apply Theorem~\ref{entropy} to bound $\E[\Delta_n^2]$ by
$2(H_n-H_{n-1})$. Recall also that Lemma~\ref{lem:alln}
says that for
supercritical percolation $H_n-H_{n-1}\le C/n$ for all $n$. Together
these give
\[
\mathbb E \bigl[\bigl(\mathbf{p}_{2n}(\rho,x)-\mathbf{p}_{2n-1}(
\tilde X_1,x)\bigr)^2\cdot {\mathbf1}_{\{x\in\omega\}}{
\mathbf1}_{\{N(\omega)\le n\}} \bigr] \le\frac{C}{n^{d+1}}\exp \bigl[-C_4
\Dd(x,\rho)^2/n \bigr].
\]
We do not need to control the behavior of the gradient on $\{N(\omega)
> n\}$
since this event has probability at most $C n^d e^{-n^\varepsilon}$.
Hence in the regime $|x|\le n^{1/2+\varepsilon/3}$ we find
\begin{eqnarray*}
&&\mathbb E \bigl[\bigl(\mathbf{p}_{2n}(\rho,x)-\mathbf{p}_{2n-1}(
\tilde X_1,x)\bigr)^2\cdot {\mathbf1}_{\{x\in\omega\}}{
\mathbf1}_{\{N(\omega) > n\}} \bigr]
\\
&&\qquad\le\mathbb P_p\bigl(N(\omega)> n\bigr)\le
\frac
{C}{n^{d+1}}\exp \bigl[-C_4\Dd(x,\rho)^2/n \bigr].
\end{eqnarray*}
Putting all the pieces together, we obtain the result.
\end{pf*}

The proof involved only Gaussian estimates at mesoscopic scale and the
entropy argument. It extends to other contexts such as random
conductances satisfying the uniform elliptic condition (see Example~\ref{random conductance}). One may then get, using convolution,
annealed second space-derivative and first time-derivative estimates
for the heat kernel using the first space-derivative estimates. We
refer to Section~5 of \cite{DD05} for more details.
\section{Open questions}\label{sec:open question}

This article must be understood as an introduction and some initial
steps in the subject.
There are many natural questions on harmonic functions which remain
open. We present few of them in this section.

%pa6.subsection.subsubsection.1 #&#
\subsection*{Minimal growth harmonic functions}
The question of minimal growth harmonic functions was implicitly
studied in the literature: the failure of the Liouville property
corresponds to a special case of minimal growth. When the Liouville
property is true, it becomes interesting to determine the minimal
growth. Even the deterministic case (i.e., transitive or Cayley graphs)
has interesting phenomenology, and we plan to analyze some examples in
a future paper. Note that groups always admit linear growth harmonic
functions \cite{Kle10,ST,T10}. This is no longer the case for
stationary random graphs. When the random walk is
subdiffusive (note that the random walk on Cayley graphs is at least
diffusive, a result due to Erschler; see Lee and Peres \cite{LP09}),
Theorem \ref{page:2p} (page \pageref{page:2p}) implies a phenomenon which is
specific to random environments.

%co23 #&#
\begin{corollary}
Let $(G,\nu,\rho)$ be a stationary random graph with polynomial growth
such that the random walk is (strictly) subdiffusive.
Then, almost surely there do not exist linear growth harmonic functions.
\end{corollary}

Therefore graphical fractals, UIPQ, critical Galton--Watson trees
conditioned to survive
and the incipient infinite cluster (IIC) do not admit linear growth
harmonic functions. We mention that it was already proved \cite{BC10}
that the uniform infinite planar triangulation is almost surely
Liouville. There are no nonconstant harmonic functions on the
critical Galton--Watson tree or on the IIC, as both have infinitely many
cut vertices. Indeed, the Galton--Watson tree is well known to be
one-ended and hence, as a tree, must have infinitely many cut
vertices. The existence of cut points for the IIC is essentially
known, but we did not find a reference and including a full proof
would take us too far off-topic.
%and the existence of cut-points for the IIC is trivial using the BK
%inequality \cite{AJ}). %Therefore, a negative answer to the previous
%question could be implied by an affirmative answer to the following
%question.

%qu1 #&#
\begin{question}
Do there exist nonconstant harmonic functions with polynomial growth
on the UIPQ?
\end{question}

If such functions exist, we may ask the following question:

%qu2 #&#
\begin{question}
What is the minimal growth of a nonconstant harmonic function on the UIPQ?
\end{question}

%More generally, it is natural to ask if the minimal growth of harmonic
%functions on a stationary random graph can be strictly larger than
%linear but still polynomial.

%pa6.subsection.subsubsection.2 #&#
\textit{Space of harmonic functions with polynomial growth}.
Cayley graphs with polynomial growth automatically satisfy the volume
doubling property and the Poincar\'e inequality, thus implying that
spaces of harmonic functions with prescribed polynomial growth are
finite dimensional.
The possibility of such behavior in the case of stationary random
graphs of polynomial volume growth is a legitimate question. For example:
%\begin{question}
%Let $(G,\nu,\rho)$ be a stationary random graph with polynomial growth.
%Is the space of harmonic functions with prescribed polynomial growth
%finite dimensional?
%\end{question}
%
%The difficulty comes from the fact that we do not necessarily have
%Poincar\'e inequality at our disposal (in the case of the UIPQ for
%instance). Therefore, we cannot use the technology developed in
%Section~\ref{sec:polynomial growth} in the general context. We mention
%that there exists another strategy to prove finite dimensionality,
%proposed in \cite{Del98}, relying on the following weaker statement:
%for every harmonic function $h$ on $G$ and $x\in G$\dvtx %$$h^2(x) \le \frac{C}{\nu(\Bb_x(n))}\sum_{y\in\Bb(x,Cn)}h^2(y)\nu(y).$$
%where is $C$ a constant independent of $x$ and $n$. This inequality
%appears in standard
%proofs of elliptic Harnack inequalities and holds for a larger class
%than those satisfying the doubling volume property and Poincar\'e
%inequality. Still one cannot expect it to hold in graphs with small
%``bottlenecks'' like the UIPQ. %We do not know if this inequality
%should be expected in the UIPQ context.

%qu3 #&#
\begin{question}
Is the space of harmonic functions with some prescribed polynomial
growth on the UIPQ finite dimensional?
\end{question}

%pa6.subsection.subsubsection.3 #&#
\textit{Dimension of spaces of harmonic functions.}
The computation of the dimension of spaces of harmonic functions does
not restrict to the case of linear growth harmonic functions. For a
graph $G$ and $k>0$, let $d_k[G]$ be the dimension of the space of
harmonic functions with growth bounded by a polynomial of degree $k$.

The similarity between $\Z^d$ and the infinite cluster of percolation
might extend to the dimension of the space of harmonic functions with
arbitrary polynomial growth. More precisely, we ask the following question:

%qu4 #&#
\begin{question}
Are the families $(d_k[\omega])_{k> 0}$ and $(d_k[\Z^d])_{k>0}$ equal
almost surely?
\end{question}

In particular, an interesting intermediate step toward this question
would be to show that there is no harmonic function with noninteger growth.

It is natural to ask if an invariance principle for the random walk in
the random environment $\omega$ implies that the sequence
$(d_k[\omega])$ coincides with $(d_k[\Z^d])$. On $\Z^d$, diffusivity
and the invariance principle are robust under rough
isometry. Therefore, one can ask if $(d_k[G])_{k\geq0}$ is invariant
under rough isometry for these kind of graphs. This is not true in
general. For instance, the Liouville property is not invariant under
rough isometry; see \cite{Lyo87} for the first example or \cite{BS96}
for a simpler one.

More generally, one can ask whether a small perturbation of a Cayley
graph modifies drastically the harmonic functions on it. For instance,
consider percolation on a Cayley graph $G$ such that $p_u(G)$ (the
infimum of the values for which there exists a \emph{unique} infinite
cluster) is strictly smaller than 1. Fix $p>p_u(G)$, and set $\omega
(G)$ to be the unique infinite cluster of the percolation with
parameter $p$.

%qu5 #&#
\begin{question}
Are the dimensions of spaces of harmonic functions with a given growth
equal for $G$ and $\omega(G)$?
\end{question}

Note that the question, in the case of bounded harmonic functions on
the infinite percolation cluster for nonamenable Cayley graphs, was
addressed in \cite{BLS99}.

In the context of Cayley graphs, the space of harmonic functions with a
certain growth rate is crucial in the study of the underlying group.
Indeed, the latter acts on harmonic functions naturally. In the random
setting, we do not have this interpretation. Nevertheless, an
interesting question is to understand what information on the random
graph is encoded in the sequence $(d_k[G])_{k\geq0}$. In particular,
the following question would be a first step in this direction:

%qu6 #&#
\begin{question}
Consider a random subgraph $G$ of $\Z^d$. What are the requirements to
ensure that $(d_k[G])_{k\ge0}$ equals $(d_k[\Z^d])_{k\ge0}$?
\end{question}

%pa6.subsection.subsubsection.4 #&#
\section*{Acknowledgments}We wish to thank Ofer Zeitouni for
pointing out that the entropy argument works also in nonreversible
setting; Russell Lyons for directing
us to \cite{EK10} and Jean-Dominique Deuschel for a discussion on
connectivity leading to the last part of Section~\ref{sec:entropy}.
This paper was written during the visit of the
second author to the Weizmann Institute in Israel. The first author is
the incumbent of the Renee and Jay Weiss Professorial Chair.

%%%%%%%%%%%%%%%%%%%%%%%%%%%%%%%%%%%%%%%%
%
%\newcommand{\nm}[1]{\textsc{#1}}
%\newcommand{\pt}[1]{{#1}}
%\newcommand{\jn}[1]{\emph{#1}}
%
%\bibliographystyle{alpha}
% imsref loaded by akundreckaite, 2014-06-03 13:46:50
% imsref loaded by akundreckaite, 2014-06-03 13:57:39
% imsref loaded by akundreckaite, 2014-06-03 14:54:39
%

\printaddresses

\begin{thebibliography}{79}

%b1 #&#
\bibitem{AL07}
%
\begin{barticle}[mr]
\bauthor{\bsnm{Aldous},~\bfnm{David}\binits{D.}} \AND
\bauthor{\bsnm{Lyons},~\bfnm{Russell}\binits{R.}}
(\byear{2007}).
\btitle{Processes on unimodular random networks}.
\bjournal{Electron. J. Probab.}
\bvolume{12}
\bpages{1454--1508}.
\bid{doi={10.1214/EJP.v12-463}, issn={1083-6489}, mr={2354165}}
\end{barticle}
%
\bptok{imsref}%
% NOT OUTPUTED:
% issn = 1083-6489
% url = http://dx.doi.org/10.1214/EJP.v12-463
% fjournal = Electronic Journal of Probability
\endbibitem

%b2 #&#
\bibitem{AP96}
%
\begin{barticle}[mr]
\bauthor{\bsnm{Antal},~\bfnm{Peter}\binits{P.}} \AND
\bauthor{\bsnm{Pisztora},~\bfnm{Agoston}\binits{A.}}
(\byear{1996}).
\btitle{On the chemical distance for supercritical {B}ernoulli percolation}.
\bjournal{Ann. Probab.}
\bvolume{24}
\bpages{1036--1048}.
\bid{doi={10.1214/aop/1039639377}, issn={0091-1798}, mr={1404543}}
\end{barticle}
%
\bptok{imsref}%
% NOT OUTPUTED:
% issn = 0091-1798
% url = http://dx.doi.org/10.1214/aop/1039639377
% number = 2
% coden = APBYAE
% fjournal = The Annals of Probability
\endbibitem

%b3 #&#
\bibitem{Av}
%
\begin{bincollection}[mr]
\bauthor{\bsnm{Avez},~\bfnm{A.}\binits{A.}}
(\byear{1976}).
\btitle{Harmonic functions on groups}.
In \bbooktitle{Differential Geometry and Relativity}.
\bseries{Mathematical Phys. and Appl. Math.}
\bvolume{3}
\bpages{27--32}.
\bpublisher{Reidel},
\blocation{Dordrecht}.
\bid{mr={0507229}}
\end{bincollection}
%
\bptok{imsref}%
\endbibitem

%b4 #&#
\bibitem{Bar}
%
\begin{bbook}[auto]
\bauthor{\bsnm{Barlow},~\bfnm{M.~T.}\binits{M.~T.}}
(\byear{1998}).
\btitle{Diffusions on fractals}.
\bseries{Lectures on Probability Theory and Statistics (Saint-Flour, 1995). Lecture Notes in Math.}
\bvolume{1690}
\bpages{1--121}.
\bpublisher{Springer}, \blocation{Berlin}.
\bid{mr={1668115}}
\end{bbook}
%
\bptok{imsref}%
\endbibitem

%b5 #&#
\bibitem{B04}
%
\begin{barticle}[mr]
\bauthor{\bsnm{Barlow},~\bfnm{Martin~T.}\binits{M.~T.}}
(\byear{2004}).
\btitle{Which values of the volume growth and escape time exponent are
possible for a graph?}
\bjournal{Rev. Mat. Iberoam.}
\bvolume{20}
\bpages{1--31}.
\bid{doi={10.4171/RMI/378}, issn={0213-2230}, mr={2076770}}
\end{barticle}
%
\bptok{imsref}%
% NOT OUTPUTED:
% issn = 0213-2230
% url = http://dx.doi.org/10.4171/RMI/378
% number = 1
% fjournal = Revista Matem\'atica Iberoamericana
\endbibitem

%b6 #&#
\bibitem{Bar04}
%
\begin{barticle}[mr]
\bauthor{\bsnm{Barlow},~\bfnm{Martin~T.}\binits{M.~T.}}
(\byear{2004}).
\btitle{Random walks on supercritical percolation clusters}.
\bjournal{Ann. Probab.}
\bvolume{32}
\bpages{3024--3084}.
\bid{doi={10.1214/009117904000000748}, issn={0091-1798}, mr={2094438}}
\end{barticle}
%
\bptok{imsref}%
% NOT OUTPUTED:
% issn = 0091-1798
% url = http://dx.doi.org/10.1214/009117904000000748
% number = 4
% coden = APBYAE
% fjournal = The Annals of Probability
\endbibitem

%b7 #&#
\bibitem{BB99}
%
\begin{barticle}[mr]
\bauthor{\bsnm{Barlow},~\bfnm{Martin~T.}\binits{M.~T.}} \AND
\bauthor{\bsnm{Bass},~\bfnm{Richard~F.}\binits{R.~F.}}
(\byear{1999}).
\btitle{Brownian motion and harmonic analysis on {S}ierpinski carpets}.
\bjournal{Canad. J. Math.}
\bvolume{51}
\bpages{673--744}.
\bid{doi={10.4153/CJM-1999-031-4}, issn={0008-414X}, mr={1701339}}
\end{barticle}
%
\bptok{imsref}%
% NOT OUTPUTED:
% issn = 0008-414X
% url = http://dx.doi.org/10.4153/CJM-1999-031-4
% number = 4
% coden = CJMAAB
% fjournal = Canadian Journal of Mathematics. Journal Canadien de Math
%\'ematiques
\endbibitem

%b8 #&#
\bibitem{BD10}
%
\begin{barticle}[mr]
\bauthor{\bsnm{Barlow},~\bfnm{M.~T.}\binits{M.~T.}} \AND
\bauthor{\bsnm{Deuschel},~\bfnm{J.-D.}\binits{J.-D.}}
(\byear{2010}).
\btitle{Invariance principle for the random conductance model with
unbounded conductances}.
\bjournal{Ann. Probab.}
\bvolume{38}
\bpages{234--276}.
\bid{doi={10.1214/09-AOP481}, issn={0091-1798}, mr={2599199}}
\end{barticle}
%
\bptok{imsref}%
% NOT OUTPUTED:
% issn = 0091-1798
% url = http://dx.doi.org/10.1214/09-AOP481
% number = 1
% coden = APBYAE
% fjournal = The Annals of Probability
\endbibitem

%b9 #&#
\bibitem{BH09}
%
\begin{barticle}[mr]
\bauthor{\bsnm{Barlow},~\bfnm{M.~T.}\binits{M.~T.}} \AND
\bauthor{\bsnm{Hambly},~\bfnm{B.~M.}\binits{B.~M.}}
(\byear{2009}).
\btitle{Parabolic {H}arnack inequality and local limit theorem for
percolation clusters}.
\bjournal{Electron. J. Probab.}
\bvolume{14}
\bpages{1--27}.
\bid{issn={1083-6489}, mr={2471657}}
\end{barticle}
%
\bptok{imsref}%
% NOT OUTPUTED:
% issn = 1083-6489
% fjournal = Electronic Journal of Probability
\endbibitem

%b10 #&#
\bibitem{BP89}
%
\begin{barticle}[mr]
\bauthor{\bsnm{Barlow},~\bfnm{Martin~T.}\binits{M.~T.}} \AND
\bauthor{\bsnm{Perkins},~\bfnm{Edwin~A.}\binits{E.~A.}}
(\byear{1989}).
\btitle{Symmetric {M}arkov chains in {$\mathbf{Z}^d$}: How fast can
they move?}
\bjournal{Probab. Theory Related Fields}
\bvolume{82}
\bpages{95--108}.
\bid{doi={10.1007/BF00340013}, issn={0178-8051}, mr={0997432}}
\end{barticle}
%
\bptok{imsref}%
% NOT OUTPUTED:
% issn = 0178-8051
% url = http://dx.doi.org/10.1007/BF00340013
% number = 1
% coden = PTRFEU
% fjournal = Probability Theory and Related Fields
\endbibitem

%b11 #&#
\bibitem{BC10}
%
\begin{barticle}[mr]
\bauthor{\bsnm{Benjamini},~\bfnm{Itai}\binits{I.}} \AND
\bauthor{\bsnm{Curien},~\bfnm{Nicolas}\binits{N.}}
(\byear{2012}).
\btitle{Ergodic theory on stationary random graphs}.
\bjournal{Electron. J. Probab.}
\bvolume{17}
\bpages{20}.
\bid{doi={10.1214/EJP.v17-2401}, issn={1083-6489}, mr={2994841}}
\end{barticle}
%
\bptok{imsref}%
% NOT OUTPUTED:
% issn = 1083-6489
% url = http://dx.doi.org/10.1214/EJP.v17-2401
% fjournal = Electronic Journal of Probability
\endbibitem

%b12 #&#
\bibitem{BC11}
%
\begin{barticle}[mr]
\bauthor{\bsnm{Benjamini},~\bfnm{Itai}\binits{I.}} \AND
\bauthor{\bsnm{Curien},~\bfnm{Nicolas}\binits{N.}}
(\byear{2013}).
\btitle{Simple random walk on the uniform infinite planar
quadrangulation: Subdiffusivity via pioneer points}.
\bjournal{Geom. Funct. Anal.}
\bvolume{23}
\bpages{501--531}.
\bid{doi={10.1007/s00039-013-0212-0}, issn={1016-443X}, mr={3053754}}
\bptnote{check year}%
\end{barticle}
%
\bptok{imsref}%
% NOT OUTPUTED:
% issn = 1016-443X
% url = http://dx.doi.org/10.1007/s00039-013-0212-0
% number = 2
% fjournal = Geometric and Functional Analysis
\endbibitem

%b13 #&#
\bibitem{BLS99}
%
\begin{bincollection}[mr]
\bauthor{\bsnm{Benjamini},~\bfnm{Itai}\binits{I.}},
\bauthor{\bsnm{Lyons},~\bfnm{Russell}\binits{R.}} \AND
\bauthor{\bsnm{Schramm},~\bfnm{Oded}\binits{O.}}
(\byear{1999}).
\btitle{Percolation perturbations in potential theory and random walks}.
In \bbooktitle{Random Walks and Discrete Potential Theory ({C}ortona, 1997)}.
\bseries{Sympos. Math.}
\bvolume{XXXIX}
\bpages{56--84}.
\bpublisher{Cambridge Univ. Press},
\blocation{Cambridge}.
\bid{mr={1802426}}
\end{bincollection}
%
\bptok{imsref}%
\endbibitem

%b14 #&#
\bibitem{BS96}
%
\begin{barticle}[mr]
\bauthor{\bsnm{Benjamini},~\bfnm{Itai}\binits{I.}} \AND
\bauthor{\bsnm{Schramm},~\bfnm{Oded}\binits{O.}}
(\byear{1996}).
\btitle{Harmonic functions on planar and almost planar graphs and
manifolds, via circle packings}.
\bjournal{Invent. Math.}
\bvolume{126}
\bpages{565--587}.
\bid{doi={10.1007/s002220050109}, issn={0020-9910}, mr={1419007}}
\end{barticle}
%
\bptok{imsref}%
% NOT OUTPUTED:
% issn = 0020-9910
% url = http://dx.doi.org/10.1007/s002220050109
% number = 3
% coden = INVMBH
% fjournal = Inventiones Mathematicae
\endbibitem

%b15 #&#
\bibitem{BS01}
%
\begin{barticle}[mr]
\bauthor{\bsnm{Benjamini},~\bfnm{Itai}\binits{I.}} \AND
\bauthor{\bsnm{Schramm},~\bfnm{Oded}\binits{O.}}
(\byear{2001}).
\btitle{Recurrence of distributional limits of finite planar graphs}.
\bjournal{Electron. J. Probab.}
\bvolume{6}
\bpages{13 pp. (electronic)}.
\bid{doi={10.1214/EJP.v6-96}, issn={1083-6489}, mr={1873300}}
\end{barticle}
%
\bptok{imsref}%
% NOT OUTPUTED:
% issn = 1083-6489
% url = http://dx.doi.org/10.1214/EJP.v6-96
% fjournal = Electronic Journal of Probability
\endbibitem

%b16 #&#
\bibitem{BB07}
%
\begin{barticle}[mr]
\bauthor{\bsnm{Berger},~\bfnm{Noam}\binits{N.}} \AND
\bauthor{\bsnm{Biskup},~\bfnm{Marek}\binits{M.}}
(\byear{2007}).
\btitle{Quenched invariance principle for simple random walk on
percolation clusters}.
\bjournal{Probab. Theory Related Fields}
\bvolume{137}
\bpages{83--120}.
\bid{doi={10.1007/s00440-006-0498-z}, issn={0178-8051}, mr={2278453}}
\end{barticle}
%
\bptok{imsref}%
% NOT OUTPUTED:
% issn = 0178-8051
% url = http://dx.doi.org/10.1007/s00440-006-0498-z
% number = 1-2
% coden = PTRFEU
% fjournal = Probability Theory and Related Fields
\endbibitem

%b17 #&#
\bibitem{BBHK08}
%
\begin{barticle}[mr]
\bauthor{\bsnm{Berger},~\bfnm{N.}\binits{N.}},
\bauthor{\bsnm{Biskup},~\bfnm{M.}\binits{M.}},
\bauthor{\bsnm{Hoffman},~\bfnm{C.~E.}\binits{C.~E.}} \AND
\bauthor{\bsnm{Kozma},~\bfnm{G.}\binits{G.}}
(\byear{2008}).
\btitle{Anomalous heat-kernel decay for random walk among bounded
random conductances}.
\bjournal{Ann. Inst. Henri Poincar\'e Probab. Stat.}
\bvolume{44}
\bpages{374--392}.
\bid{doi={10.1214/07-AIHP126}, issn={0246-0203}, mr={2446329}}
\end{barticle}
%
\bptok{imsref}%
% NOT OUTPUTED:
% issn = 0246-0203
% url = http://dx.doi.org/10.1214/07-AIHP126
% number = 2
% fjournal = Annales de l'Institut Henri Poincar\'e Probabilit\'es et
%Statistiques
\endbibitem

%b18 #&#
\bibitem{BD11}
%
\begin{barticle}[mr]
\bauthor{\bsnm{Berger},~\bfnm{Noam}\binits{N.}} \AND
\bauthor{\bsnm{Deuschel},~\bfnm{Jean-Dominique}\binits{J.-D.}}
(\byear{2014}).
\btitle{A quenched invariance principle for non-elliptic random walk in
i.i.d. balanced random environment}.
\bjournal{Probab. Theory Related Fields}
\bvolume{158}
\bpages{91--126}.
\bid{doi={10.1007/s00440-012-0478-4}, issn={0178-8051}, mr={3152781}}
\bptnote{check year}%
\end{barticle}
%
\bptok{imsref}%
% NOT OUTPUTED:
% issn = 0178-8051
% url = http://dx.doi.org/10.1007/s00440-012-0478-4
% number = 1-2
% fjournal = Probability Theory and Related Fields
\endbibitem

%b19 #&#
\bibitem{BP07}
%
\begin{barticle}[mr]
\bauthor{\bsnm{Biskup},~\bfnm{Marek}\binits{M.}} \AND
\bauthor{\bsnm{Prescott},~\bfnm{Timothy~M.}\binits{T.~M.}}
(\byear{2007}).
\btitle{Functional {CLT} for random walk among bounded random conductances}.
\bjournal{Electron. J. Probab.}
\bvolume{12}
\bpages{1323--1348}.
\bid{doi={10.1214/EJP.v12-456}, issn={1083-6489}, mr={2354160}}
\end{barticle}
%
\bptok{imsref}%
% NOT OUTPUTED:
% issn = 1083-6489
% url = http://dx.doi.org/10.1214/EJP.v12-456
% fjournal = Electronic Journal of Probability
\endbibitem

%b20 #&#
\bibitem{BSZ03}
%
\begin{barticle}[mr]
\bauthor{\bsnm{Bolthausen},~\bfnm{Erwin}\binits{E.}},
\bauthor{\bsnm{Sznitman},~\bfnm{Alain-Sol}\binits{A.-S.}} \AND
\bauthor{\bsnm{Zeitouni},~\bfnm{Ofer}\binits{O.}}
(\byear{2003}).
\btitle{Cut points and diffusive random walks in random environment}.
\bjournal{Ann. Inst. Henri Poincar\'e Probab. Stat.}
\bvolume{39}
\bpages{527--555}.
\bid{doi={10.1016/S0246-0203(02)00019-5}, issn={0246-0203}, mr={1978990}}
\end{barticle}
%
\bptok{imsref}%
% NOT OUTPUTED:
% issn = 0246-0203
% url = http://dx.doi.org/10.1016/S0246-0203(02)00019-5
% number = 3
% coden = AHPBAR
% fjournal = Annales de l'Institut Henri Poincar\'e. Probabilit\'es et
%Statistiques
\endbibitem

%b21 #&#
\bibitem{B82}
%
\begin{barticle}[mr]
\bauthor{\bsnm{Buser},~\bfnm{Peter}\binits{P.}}
(\byear{1982}).
\btitle{A note on the isoperimetric constant}.
\bjournal{Ann. Sci. \'Ecole Norm. Sup. (4)}
\bvolume{15}
\bpages{213--230}.
\bid{issn={0012-9593}, mr={0683635}}
\end{barticle}
%
\bptok{imsref}%
% NOT OUTPUTED:
% issn = 0012-9593
% url = http://www.numdam.org/item?id=ASENS_1982_4_15_2_213_0
% number = 2
% coden = ASENAH
% fjournal = Annales Scientifiques de l'\'Ecole Normale Sup\'erieure.
%Quatri\`eme S\'erie
\endbibitem

%b22 #&#
\bibitem{CFP}
%
\begin{barticle}[mr]
\bauthor{\bsnm{Caputo},~\bfnm{P.}\binits{P.}},
\bauthor{\bsnm{Faggionato},~\bfnm{A.}\binits{A.}} \AND
\bauthor{\bsnm{Prescott},~\bfnm{T.}\binits{T.}}
(\byear{2013}).
\btitle{Invariance principle for {M}ott variable range hopping and
other walks on point processes}.
\bjournal{Ann. Inst. Henri Poincar\'e Probab. Stat.}
\bvolume{49}
\bpages{654--697}.
\bid{doi={10.1214/12-AIHP490}, issn={0246-0203}, mr={3112430}}
\bptnote{check year}%
\end{barticle}
%
\bptok{imsref}%
% NOT OUTPUTED:
% issn = 0246-0203
% url = http://dx.doi.org/10.1214/12-AIHP490
% number = 3
% fjournal = Annales de l'Institut Henri Poincar\'e Probabilit\'es et
%Statistiques
\endbibitem

%b23 #&#
\bibitem{Car85}
%
\begin{barticle}[mr]
\bauthor{\bsnm{Carne},~\bfnm{Thomas~Keith}\binits{T.~K.}}
(\byear{1985}).
\btitle{A transmutation formula for {M}arkov chains}.
\bjournal{Bull. Sci. Math. (2)}
\bvolume{109}
\bpages{399--405}.
\bid{issn={0007-4497}, mr={0837740}}
\end{barticle}
%
\bptok{imsref}%
% NOT OUTPUTED:
% issn = 0007-4497
% number = 4
% coden = BSMQA9
% fjournal = Bulletin des Sciences Math\'ematiques. 2e S\'erie
\endbibitem

%b24 #&#
\bibitem{CD06}
%
\begin{barticle}[mr]
\bauthor{\bsnm{Chassaing},~\bfnm{Philippe}\binits{P.}} \AND
\bauthor{\bsnm{Durhuus},~\bfnm{Bergfinnur}\binits{B.}}
(\byear{2006}).
\btitle{Local limit of labeled trees and expected volume growth in a
random quadrangulation}.
\bjournal{Ann. Probab.}
\bvolume{34}
\bpages{879--917}.
\bid{doi={10.1214/009117905000000774}, issn={0091-1798}, mr={2243873}}
\end{barticle}
%
\bptok{imsref}%
% NOT OUTPUTED:
% issn = 0091-1798
% url = http://dx.doi.org/10.1214/009117905000000774
% number = 3
% coden = APBYAE
% fjournal = The Annals of Probability
\endbibitem

%b25 #&#
\bibitem{CD60}
%
\begin{barticle}[mr]
\bauthor{\bsnm{Choquet},~\bfnm{Gustave}\binits{G.}} \AND
\bauthor{\bsnm{Deny},~\bfnm{Jacques}\binits{J.}}
(\byear{1960}).
\btitle{Sur l'\'equation de convolution {$\mu=\mu\ast\sigma$}}.
\bjournal{C. R. Math. Acad. Sci. Paris}
\bvolume{250}
\bpages{799--801}.
\bid{mr={0119041}}
\end{barticle}
%
\bptok{imsref}%
\endbibitem

%b26 #&#
\bibitem{CM97}
%
\begin{barticle}[mr]
\bauthor{\bsnm{Colding},~\bfnm{Tobias~H.}\binits{T.~H.}} \AND
\bauthor{\bsnm{Minicozzi},~\bfnm{William~P.}\binits{W.~P.} \bsuffix{II}}
(\byear{1997}).
\btitle{Harmonic functions on manifolds}.
\bjournal{Ann. of Math. (2)}
\bvolume{146}
\bpages{725--747}.
\bid{doi={10.2307/2952459}, issn={0003-486X}, mr={1491451}}
\end{barticle}
%
\bptok{imsref}%
% NOT OUTPUTED:
% issn = 0003-486X
% url = http://dx.doi.org/10.2307/2952459
% number = 3
% coden = ANMAAH
% fjournal = Annals of Mathematics. Second Series
\endbibitem

%b27 #&#
\bibitem{CN00}
%
\begin{barticle}[mr]
\bauthor{\bsnm{Conlon},~\bfnm{Joseph~G.}\binits{J.~G.}} \AND
\bauthor{\bsnm{Naddaf},~\bfnm{Ali}\binits{A.}}
(\byear{2000}).
\btitle{Green's functions for elliptic and parabolic equations with
random coefficients}.
\bjournal{New York J. Math.}
\bvolume{6}
\bpages{153--225 (electronic)}.
\bid{issn={1076-9803}, mr={1781430}}
\end{barticle}
%
\bptok{imsref}%
% NOT OUTPUTED:
% issn = 1076-9803
% url = http://nyjm.albany.edu:8000/j/2000/6_153.html
% fjournal = New York Journal of Mathematics
\endbibitem

%b28 #&#
\bibitem{Csi63}
%
\begin{barticle}[mr]
\bauthor{\bsnm{Csisz{\'a}r},~\bfnm{Imre}\binits{I.}}
(\byear{1963}).
\btitle{Eine informationstheoretische {U}ngleichung und ihre
{A}nwendung auf den {B}eweis der {E}rgodizit\"at von {M}arkoffschen {K}etten}.
\bjournal{Magyar Tud. Akad. Mat. Kutat\'o Int. K\"ozl.}
\bvolume{8}
\bpages{85--108}.
\bid{mr={0164374}}
\end{barticle}
%
\bptok{imsref}%
\endbibitem

%b29 #&#
\bibitem{Csi66}
%
\begin{barticle}[mr]
\bauthor{\bsnm{Csisz{\'a}r},~\bfnm{I.}\binits{I.}}
(\byear{1966}).
\btitle{A note on {J}ensen's inequality}.
\bjournal{Studia Sci. Math. Hungar.}
\bvolume{1}
\bpages{185--188}.
\bid{issn={0081-6906}, mr={0214714}}
\end{barticle}
%
\bptok{imsref}%
% NOT OUTPUTED:
% issn = 0081-6906
% fjournal = Studia Scientiarum Mathematicarum Hungarica. A Quarterly
%of the Hungarian Academy of Sciences
\endbibitem

%b30 #&#
\bibitem{Del99}
%
\begin{barticle}[mr]
\bauthor{\bsnm{Delmotte},~\bfnm{Thierry}\binits{T.}}
(\byear{1999}).
\btitle{Parabolic {H}arnack inequality and estimates of {M}arkov chains
on graphs}.
\bjournal{Rev. Mat. Iberoam.}
\bvolume{15}
\bpages{181--232}.
\bid{doi={10.4171/RMI/254}, issn={0213-2230}, mr={1681641}}
\end{barticle}
%
\bptok{imsref}%
% NOT OUTPUTED:
% issn = 0213-2230
% url = http://dx.doi.org/10.4171/RMI/254
% number = 1
% fjournal = Revista Matem\'atica Iberoamericana
\endbibitem

%b31 #&#
\bibitem{Del98}
%
\begin{bincollection}[mr]
\bauthor{\bsnm{Delmotte},~\bfnm{Thierry}\binits{T.}}
(\byear{1998}).
\btitle{Harnack inequalities on graphs}.
In \bbooktitle{S\'eminaire de {T}h\'eorie {S}pectrale et {G}\'eom\'
etrie, {V}ol. 16, {A}nn\'ee 1997--1998}.
\bseries{S\'emin. Th\'eor. Spectr. G\'eom.}
\bvolume{16}
\bpages{217--228}.
\bpublisher{Univ. Grenoble I},
\blocation{Saint-Martin-d'H\`eres}.
\bid{mr={1666463}}
%\bptnote{check year}%
\end{bincollection}
%
\bptok{imsref}%
\endbibitem

%b32 #&#
\bibitem{DD05}
%
\begin{barticle}[mr]
\bauthor{\bsnm{Delmotte},~\bfnm{T.}\binits{T.}} \AND
\bauthor{\bsnm{Deuschel},~\bfnm{J.-D.}\binits{J.-D.}}
(\byear{2005}).
\btitle{On estimating the derivatives of symmetric diffusions in
stationary random environment, with applications to {$\nabla\phi$}
interface model}.
\bjournal{Probab. Theory Related Fields}
\bvolume{133}
\bpages{358--390}.
\bid{doi={10.1007/s00440-005-0430-y}, issn={0178-8051}, mr={2198017}}
\end{barticle}
%
\bptok{imsref}%
% NOT OUTPUTED:
% issn = 0178-8051
% url = http://dx.doi.org/10.1007/s00440-005-0430-y
% number = 3
% coden = PTRFEU
% fjournal = Probability Theory and Related Fields
\endbibitem

%b33 #&#
\bibitem{D80}
%
\begin{bincollection}[mr]
\bauthor{\bsnm{Derriennic},~\bfnm{Yves}\binits{Y.}}
(\byear{1980}).
\btitle{Quelques applications du th\'eor\`eme ergodique sous-additif}.
In \bbooktitle{Conference on {R}andom {W}alks ({K}leebach, 1979) ({F}rench)}.
\bseries{Ast\'erisque}
\bvolume{74}
\bpages{183--201, 4}.
\bpublisher{Soc. Math.},
\blocation{France, Paris}.
\bid{mr={0588163}}
\end{bincollection}
%
\bptok{imsref}%
\endbibitem

%b34 #&#
\bibitem{DK08}
%
\begin{barticle}[mr]
\bauthor{\bsnm{Deuschel},~\bfnm{Jean-Dominique}\binits{J.-D.}} \AND
\bauthor{\bsnm{K{\"o}sters},~\bfnm{Holger}\binits{H.}}
(\byear{2008}).
\btitle{The quenched invariance principle for random walks in random
environments admitting a bounded cycle representation}.
\bjournal{Ann. Inst. Henri Poincar\'e Probab. Stat.}
\bvolume{44}
\bpages{574--591}.
\bid{doi={10.1214/07-AIHP122}, issn={0246-0203}, mr={2451058}}
\end{barticle}
%
\bptok{imsref}%
% NOT OUTPUTED:
% issn = 0246-0203
% url = http://dx.doi.org/10.1214/07-AIHP122
% number = 3
% fjournal = Annales de l'Institut Henri Poincar\'e Probabilit\'es et
%Statistiques
\endbibitem

%b35 #&#
\bibitem{DeG57}
%
\begin{barticle}[mr]
\bauthor{\bsnm{De Giorgi},~\bfnm{Ennio}\binits{E.}}
(\byear{1957}).
\btitle{Sulla differenziabilit\`a e l'analiticit\`a delle estremali
degli integrali multipli regolari}.
\bjournal{Mem. Accad. Sci. Torino. Cl. Sci. Fis. Mat. Nat. (3)}
\bvolume{3}
\bpages{25--43}.
\bid{mr={0093649}}
\end{barticle}
%
\bptok{imsref}%
\endbibitem

%b36 #&#
\bibitem{MFGW89}
%
\begin{barticle}[mr]
\bauthor{\bsnm{De Masi},~\bfnm{A.}\binits{A.}},
\bauthor{\bsnm{Ferrari},~\bfnm{P.~A.}\binits{P.~A.}},
\bauthor{\bsnm{Goldstein},~\bfnm{S.}\binits{S.}} \AND
\bauthor{\bsnm{Wick},~\bfnm{W.~D.}\binits{W.~D.}}
(\byear{1989}).
\btitle{An invariance principle for reversible {M}arkov processes.
{A}pplications to random motions in random environments}.
\bjournal{J. Stat. Phys.}
\bvolume{55}
\bpages{787--855}.
\bid{doi={10.1007/BF01041608}, issn={0022-4715}, mr={1003538}}
\end{barticle}
%
\bptok{imsref}%
% NOT OUTPUTED:
% issn = 0022-4715
% url = http://dx.doi.org/10.1007/BF01041608
% number = 3-4
% coden = JSTPSB
% fjournal = Journal of Statistical Physics
\endbibitem

%b37 #&#
\bibitem{DSZ10}
%
\begin{barticle}[mr]
\bauthor{\bsnm{Disertori},~\bfnm{M.}\binits{M.}},
\bauthor{\bsnm{Spencer},~\bfnm{T.}\binits{T.}} \AND
\bauthor{\bsnm{Zirnbauer},~\bfnm{M.~R.}\binits{M.~R.}}
(\byear{2010}).
\btitle{Quasi-diffusion in a 3{D} supersymmetric hyperbolic sigma model}.
\bjournal{Comm. Math. Phys.}
\bvolume{300}
\bpages{435--486}.
\bid{doi={10.1007/s00220-010-1117-5}, issn={0010-3616}, mr={2728731}}
\end{barticle}
%
\bptok{imsref}%
% NOT OUTPUTED:
% issn = 0010-3616
% url = http://dx.doi.org/10.1007/s00220-010-1117-5
% number = 2
% coden = CMPHAY
% fjournal = Communications in Mathematical Physics
\endbibitem

%b38 #&#
\bibitem{D02}
%
\begin{bbook}[mr]
\bauthor{\bsnm{Dudley},~\bfnm{R.~M.}\binits{R.~M.}}
(\byear{2002}).
\btitle{Real Analysis and Probability}.
\bseries{Cambridge Studies in Advanced Mathematics}
\bvolume{74}.
\bpublisher{Cambridge Univ. Press},
\blocation{Cambridge}.
%\bnote{Revised reprint of the 1989 original}.
\bid{doi={10.1017/CBO9780511755347}, mr={1932358}}
\end{bbook}
%
\bptok{imsref}%
% NOT OUTPUTED:
% isbn = 0-521-00754-2
% url = http://dx.doi.org/10.1017/CBO9780511755347
% fpage = x+555
\endbibitem

%b39 #&#
\bibitem{EK10}
%
\begin{barticle}[mr]
\bauthor{\bsnm{Erschler},~\bfnm{Anna}\binits{A.}} \AND
\bauthor{\bsnm{Karlsson},~\bfnm{Anders}\binits{A.}}
(\byear{2010}).
\btitle{Homomorphisms to {$\mathbb{R}$} constructed from random walks}.
\bjournal{Ann. Inst. Fourier (Grenoble)}
\bvolume{60}
\bpages{2095--2113}.
\bid{issn={0373-0956}, mr={2791651}}
\end{barticle}
%
\bptok{imsref}%
% NOT OUTPUTED:
% issn = 0373-0956
% url = http://aif.cedram.org/item?id=AIF_2010__60_6_2095_0
% number = 6
% coden = AIFUA7
% fjournal = Universit\'e de Grenoble. Annales de l'Institut Fourier
\endbibitem

%b40 #&#
\bibitem{FGG}
%
\begin{barticle}[mr]
\bauthor{\bsnm{Ferrari},~\bfnm{Pablo~A.}\binits{P.~A.}},
\bauthor{\bsnm{Grisi},~\bfnm{Rafael~M.}\binits{R.~M.}} \AND
\bauthor{\bsnm{Groisman},~\bfnm{Pablo}\binits{P.}}
(\byear{2012}).
\btitle{Harmonic deformation of {D}elaunay triangulations}.
\bjournal{Stochastic Process. Appl.}
\bvolume{122}
\bpages{2185--2210}.
\bid{doi={10.1016/j.spa.2012.02.003}, issn={0304-4149}, mr={2921977}}
\end{barticle}
%
\bptok{imsref}%
% NOT OUTPUTED:
% issn = 0304-4149
% url = http://dx.doi.org/10.1016/j.spa.2012.02.003
% number = 5
% coden = STOPB7
% fjournal = Stochastic Processes and their Applications
\endbibitem

%b41 #&#
\bibitem{F63}
%
\begin{barticle}[mr]
\bauthor{\bsnm{Furstenberg},~\bfnm{Harry}\binits{H.}}
(\byear{1963}).
\btitle{A {P}oisson formula for semi-simple {L}ie groups}.
\bjournal{Ann. of Math. (2)}
\bvolume{77}
\bpages{335--386}.
\bid{issn={0003-486X}, mr={0146298}}
\end{barticle}
%
\bptok{imsref}%
% NOT OUTPUTED:
% issn = 0003-486X
% fjournal = Annals of Mathematics. Second Series
\endbibitem

%b42 #&#
\bibitem{F71}
%
\begin{bincollection}[mr]
\bauthor{\bsnm{Furstenberg},~\bfnm{Harry}\binits{H.}}
(\byear{1971}).
\btitle{Random walks and discrete subgroups of {L}ie groups}.
In \bbooktitle{Advances in {P}robability and {R}elated {T}opics,
{V}ol. 1}
\bpages{1--63}.
\bpublisher{Dekker},
\blocation{New York}.
\bid{mr={0284569}}
\end{bincollection}
%
\bptok{imsref}%
\endbibitem

%b43 #&#
\bibitem{GO11}
%
\begin{barticle}[mr]
\bauthor{\bsnm{Gloria},~\bfnm{Antoine}\binits{A.}} \AND
\bauthor{\bsnm{Otto},~\bfnm{Felix}\binits{F.}}
(\byear{2011}).
\btitle{An optimal variance estimate in stochastic homogenization of
discrete elliptic equations}.
\bjournal{Ann. Probab.}
\bvolume{39}
\bpages{779--856}.
\bid{doi={10.1214/10-AOP571}, issn={0091-1798}, mr={2789576}}
\end{barticle}
%
\bptok{imsref}%
% NOT OUTPUTED:
% issn = 0091-1798
% url = http://dx.doi.org/10.1214/10-AOP571
% number = 3
% coden = APBYAE
% fjournal = The Annals of Probability
\endbibitem

%b44 #&#
\bibitem{G99}
%
\begin{bbook}[mr]
\bauthor{\bsnm{Grimmett},~\bfnm{Geoffrey}\binits{G.}}
(\byear{1999}).
\btitle{Percolation},
\bedition{2nd} ed.
\bseries{Grundlehren der Mathematischen Wissenschaften}
\bvolume{321}.
\bpublisher{Springer},
\blocation{Berlin}.
\bid{doi={10.1007/978-3-662-03981-6}, mr={1707339}}
\end{bbook}
%
\bptok{imsref}%
% NOT OUTPUTED:
% isbn = 3-540-64902-6
% url = http://dx.doi.org/10.1007/978-3-662-03981-6
% fpage = xiv+444
\endbibitem

%b45 #&#
\bibitem{GZ10}
%
\begin{barticle}[mr]
\bauthor{\bsnm{Guo},~\bfnm{Xiaoqin}\binits{X.}} \AND
\bauthor{\bsnm{Zeitouni},~\bfnm{Ofer}\binits{O.}}
(\byear{2012}).
\btitle{Quenched invariance principle for random walks in balanced
random environment}.
\bjournal{Probab. Theory Related Fields}
\bvolume{152}
\bpages{207--230}.
\bid{doi={10.1007/s00440-010-0320-9}, issn={0178-8051}, mr={2875757}}
\end{barticle}
%
\bptok{imsref}%
% NOT OUTPUTED:
% issn = 0178-8051
% url = http://dx.doi.org/10.1007/s00440-010-0320-9
% number = 1-2
% coden = PTRFEU
% fjournal = Probability Theory and Related Fields
\endbibitem

%b46 #&#
\bibitem{HSC93}
%
\begin{barticle}[mr]
\bauthor{\bsnm{Hebisch},~\bfnm{W.}\binits{W.}} \AND
\bauthor{\bsnm{Saloff-Coste},~\bfnm{L.}\binits{L.}}
(\byear{1993}).
\btitle{Gaussian estimates for {M}arkov chains and random walks on groups}.
\bjournal{Ann. Probab.}
\bvolume{21}
\bpages{673--709}.
\bid{issn={0091-1798}, mr={1217561}}
\end{barticle}
%
\bptok{imsref}%
% NOT OUTPUTED:
% issn = 0091-1798
% url =
%%http://links.jstor.org/sici?sici=0091-1798(199304)21:2<673:GEFMCA>2.0.CO;2-R&origin=MSN
% number = 2
% coden = APBYAE
% fjournal = The Annals of Probability
\endbibitem

%b47 #&#
\bibitem{HHH}
%
\begin{bmisc}[auto]
\bauthor{\bsnm{Heydenreich},~\bfnm{M.}\binits{M.}},
\bauthor{\bsnm{van der Hofstad},~\bfnm{R.}\binits{R.}}
\AND
\bauthor{\bsnm{Hulshof},~\bfnm{T.}\binits{T.}}
(\byear{2011}).
\bhowpublished{High-dimensional incipient infinite clusters revisited.
Preprint. Available at \surl{http://\\arxiv.org/abs/1108.4325}.}
\end{bmisc}
%
\bptok{imsref}%
\endbibitem

%b48 #&#
\bibitem{AJ}
%
\begin{barticle}[mr]
\bauthor{\bsnm{J{\'a}rai},~\bfnm{Antal~A.}\binits{A.~A.}}
(\byear{2003}).
\btitle{Incipient infinite percolation clusters in 2{D}}.
\bjournal{Ann. Probab.}
\bvolume{31}
\bpages{444--485}.
\bid{doi={10.1214/aop/1046294317}, issn={0091-1798}, mr={1959799}}
\end{barticle}
%
\bptok{imsref}%
% NOT OUTPUTED:
% issn = 0091-1798
% url = http://dx.doi.org/10.1214/aop/1046294317
% number = 1
% coden = APBYAE
% fjournal = The Annals of Probability
\endbibitem

%b49 #&#
\bibitem{J86}
%
\begin{barticle}[mr]
\bauthor{\bsnm{Jerison},~\bfnm{David}\binits{D.}}
(\byear{1986}).
\btitle{The {P}oincar\'e inequality for vector fields satisfying {H}\"
ormander's condition}.
\bjournal{Duke Math. J.}
\bvolume{53}
\bpages{503--523}.
\bid{doi={10.1215/S0012-7094-86-05329-9}, issn={0012-7094}, mr={0850547}}
\end{barticle}
%
\bptok{imsref}%
% NOT OUTPUTED:
% issn = 0012-7094
% url = http://dx.doi.org/10.1215/S0012-7094-86-05329-9
% number = 2
% coden = DUMJAO
% fjournal = Duke Mathematical Journal
\endbibitem

%b50 #&#
\bibitem{KS10}
%
\begin{bincollection}[mr]
\bauthor{\bsnm{Kaimanovich},~\bfnm{Vadim~A.}\binits{V.~A.}} \AND
\bauthor{\bsnm{Sobieczky},~\bfnm{Florian}\binits{F.}}
(\byear{2010}).
\btitle{Stochastic homogenization of horospheric tree products}.
In \bbooktitle{Probabilistic Approach to Geometry}.
\bseries{Adv. Stud. Pure Math.}
\bvolume{57}
\bpages{199--229}.
\bpublisher{Math. Soc. Japan},
\blocation{Tokyo}.
\bid{mr={2648261}}
\end{bincollection}
%
\bptok{imsref}%
\endbibitem

%b51 #&#
\bibitem{KV}
%
\begin{barticle}[mr]
\bauthor{\bsnm{Ka{\u\i}manovich},~\bfnm{V.~A.}\binits{V.~A.}}
\AND
\bauthor{\bsnm{Vershik},~\bfnm{A.~M.}\binits{A.~M.}}
(\byear{1983}).
\btitle{Random walks on discrete groups: Boundary and entropy}.
\bjournal{Ann. Probab.}
\bvolume{11}
\bpages{457--490}.
\bid{issn={0091-1798}, mr={0704539}}
\end{barticle}
%
\bptok{imsref}%
% NOT OUTPUTED:
% issn = 0091-1798
% url =
%%http://links.jstor.org/sici?sici=0091-1798(198308)11:3<457:RWODGB>2.0.CO;2-5&origin=MSN
% number = 3
% coden = APBYAE
% fjournal = The Annals of Probability
\endbibitem

%b52 #&#
\bibitem{Ke86a}
%
\begin{barticle}[mr]
\bauthor{\bsnm{Kesten},~\bfnm{Harry}\binits{H.}}
(\byear{1986}).
\btitle{The incipient infinite cluster in two-dimensional percolation}.
\bjournal{Probab. Theory Related Fields}
\bvolume{73}
\bpages{369--394}.
\bid{doi={10.1007/BF00776239}, issn={0178-8051}, mr={0859839}}
\end{barticle}
%
\bptok{imsref}%
% NOT OUTPUTED:
% issn = 0178-8051
% url = http://dx.doi.org/10.1007/BF00776239
% number = 3
% coden = PTRFEU
% fjournal = Probability Theory and Related Fields
\endbibitem

%b53 #&#
\bibitem{Ke86}
%
\begin{barticle}[mr]
\bauthor{\bsnm{Kesten},~\bfnm{Harry}\binits{H.}}
(\byear{1986}).
\btitle{Subdiffusive behavior of random walk on a random cluster}.
\bjournal{Ann. Inst. Henri Poincar\'e Probab. Stat.}
\bvolume{22}
\bpages{425--487}.
\bid{issn={0246-0203}, mr={0871905}}
\end{barticle}
%
\bptok{imsref}%
% NOT OUTPUTED:
% issn = 0246-0203
% url = http://www.numdam.org/item?id=AIHPB_1986__22_4_425_0
% number = 4
% coden = AHPBAR
% fjournal = Annales de l'Institut Henri Poincar\'e. Probabilit\'es et
%Statistique
\endbibitem

%b54 #&#
\bibitem{KV86}
%
\begin{barticle}[mr]
\bauthor{\bsnm{Kipnis},~\bfnm{C.}\binits{C.}} \AND
\bauthor{\bsnm{Varadhan},~\bfnm{S.~R.~S.}\binits{S.~R.~S.}}
(\byear{1986}).
\btitle{Central limit theorem for additive functionals of reversible
{M}arkov processes and applications to simple exclusions}.
\bjournal{Comm. Math. Phys.}
\bvolume{104}
\bpages{1--19}.
\bid{issn={0010-3616}, mr={0834478}}
\end{barticle}
%
\bptok{imsref}%
% NOT OUTPUTED:
% issn = 0010-3616
% url = http://projecteuclid.org/euclid.cmp/1104114929
% number = 1
% coden = CMPHAY
% fjournal = Communications in Mathematical Physics
\endbibitem

%b55 #&#
\bibitem{Kle10}
%
\begin{barticle}[mr]
\bauthor{\bsnm{Kleiner},~\bfnm{Bruce}\binits{B.}}
(\byear{2010}).
\btitle{A new proof of {G}romov's theorem on groups of polynomial growth}.
\bjournal{J.~Amer. Math. Soc.}
\bvolume{23}
\bpages{815--829}.
\bid{doi={10.1090/S0894-0347-09-00658-4}, issn={0894-0347}, mr={2629989}}
\end{barticle}
%
\bptok{imsref}%
% NOT OUTPUTED:
% issn = 0894-0347
% url = http://dx.doi.org/10.1090/S0894-0347-09-00658-4
% number = 3
% fjournal = Journal of the American Mathematical Society
\endbibitem

%b56 #&#
\bibitem{K85}
%
\begin{barticle}[mr]
\bauthor{\bsnm{Kozlov},~\bfnm{S.~M.}\binits{S.~M.}}
(\byear{1985}).
\btitle{The averaging method and walks in inhomogeneous environments}.
\bjournal{Uspekhi Mat. Nauk}
\bvolume{40}
\bpages{61--120, 238}.
\bnote{Russian version available at \surl{mathnet.ru}.}
\bid{issn={0042-1316}, mr={0786087}}
\end{barticle}
%
\bptok{imsref}%
% NOT OUTPUTED:
% issn = 0042-1316
% number = 2(242)
% fjournal = Akademiya Nauk SSSR i Moskovskoe Matematicheskoe
%Obshchestvo. Uspekhi Matematicheskikh Nauk
\endbibitem

%b57 #&#
\bibitem{KN09}
%
\begin{barticle}[mr]
\bauthor{\bsnm{Kozma},~\bfnm{Gady}\binits{G.}} \AND
\bauthor{\bsnm{Nachmias},~\bfnm{Asaf}\binits{A.}}
(\byear{2009}).
\btitle{The {A}lexander--{O}rbach conjecture holds in high dimensions}.
\bjournal{Invent. Math.}
\bvolume{178}
\bpages{635--654}.
\bid{doi={10.1007/s00222-009-0208-4}, issn={0020-9910}, mr={2551766}}
\end{barticle}
%
\bptok{imsref}%
% NOT OUTPUTED:
% issn = 0020-9910
% url = http://dx.doi.org/10.1007/s00222-009-0208-4
% number = 3
% coden = INVMBH
% fjournal = Inventiones Mathematicae
\endbibitem

%b58 #&#
\bibitem{Kri}
%
\begin{bmisc}[auto]
\bauthor{\bsnm{Krikun},~\bfnm{M.}\binits{M.}}
(\byear{2008}).
\bhowpublished{On one property of distances in the infinite random
quadrangulation.
Preprint.
Available at \surl{http://arxiv.org/abs/0805.1907}.}
\end{bmisc}
%
\bptok{imsref}%
\endbibitem

%b59 #&#
\bibitem{L82}
%
\begin{barticle}[mr]
\bauthor{\bsnm{Lawler},~\bfnm{Gregory~F.}\binits{G.~F.}}
(\byear{1982/83}).
\btitle{Weak convergence of a random walk in a random environment}.
\bjournal{Comm. Math. Phys.}
\bvolume{87}
\bpages{81--87}.
\bid{issn={0010-3616}, mr={0680649}}
\bptnote{check year}%
\end{barticle}
%
\bptok{imsref}%
% NOT OUTPUTED:
% issn = 0010-3616
% url = http://projecteuclid.org/euclid.cmp/1103921905
% number = 1
% coden = CMPHAY
% fjournal = Communications in Mathematical Physics
\endbibitem

%b60 #&#
\bibitem{LP09}
%
\begin{barticle}[mr]
\bauthor{\bsnm{Lee},~\bfnm{James~R.}\binits{J.~R.}} \AND
\bauthor{\bsnm{Peres},~\bfnm{Yuval}\binits{Y.}}
(\byear{2013}).
\btitle{Harmonic maps on amenable groups and a diffusive lower bound
for random walks}.
\bjournal{Ann. Probab.}
\bvolume{41}
\bpages{3392--3419}.
\bid{doi={10.1214/12-AOP779}, issn={0091-1798}, mr={3127886}}
\bptnote{check year}%
\end{barticle}
%
\bptok{imsref}%
% NOT OUTPUTED:
% issn = 0091-1798
% url = http://dx.doi.org/10.1214/12-AOP779
% number = 5
% fjournal = The Annals of Probability
\endbibitem

%b61 #&#
\bibitem{Li93}
%
\begin{bincollection}[mr]
\bauthor{\bsnm{Li},~\bfnm{Peter}\binits{P.}}
(\byear{1993}).
\btitle{The theory of harmonic functions and its relation to geometry}.
In \bbooktitle{Differential Geometry: Partial Differential Equations on
Manifolds ({L}os {A}ngeles, {CA}, 1990)}.
\bseries{Proc. Sympos. Pure Math.}
\bvolume{54}
\bpages{307--315}.
\bpublisher{Amer. Math. Soc.},
\blocation{Providence, RI}.
\bid{mr={1216591}}
\end{bincollection}
%
\bptok{imsref}%
\endbibitem

%b62 #&#
\bibitem{LPP95}
%
\begin{barticle}[mr]
\bauthor{\bsnm{Lyons},~\bfnm{Russell}\binits{R.}},
\bauthor{\bsnm{Pemantle},~\bfnm{Robin}\binits{R.}} \AND
\bauthor{\bsnm{Peres},~\bfnm{Yuval}\binits{Y.}}
(\byear{1995}).
\btitle{Ergodic theory on {G}alton--{W}atson trees: Speed of random
walk and dimension of harmonic measure}.
\bjournal{Ergodic Theory Dynam. Systems}
\bvolume{15}
\bpages{593--619}.
\bid{doi={10.1017/S0143385700008543}, issn={0143-3857}, mr={1336708}}
\end{barticle}
%
\bptok{imsref}%
% NOT OUTPUTED:
% issn = 0143-3857
% url = http://dx.doi.org/10.1017/S0143385700008543
% number = 3
% fjournal = Ergodic Theory and Dynamical Systems
\endbibitem

%b63 #&#
\bibitem{Lyo87}
%
\begin{barticle}[mr]
\bauthor{\bsnm{Lyons},~\bfnm{Terry}\binits{T.}}
(\byear{1987}).
\btitle{Instability of the {L}iouville property for quasi-isometric
{R}iemannian manifolds and reversible {M}arkov chains}.
\bjournal{J. Differential Geom.}
\bvolume{26}
\bpages{33--66}.
\bid{issn={0022-040X}, mr={0892030}}
\end{barticle}
%
\bptok{imsref}%
% NOT OUTPUTED:
% issn = 0022-040X
% url = http://projecteuclid.org/euclid.jdg/1214441175
% number = 1
% coden = JDGEAS
% fjournal = Journal of Differential Geometry
\endbibitem

%b64 #&#
\bibitem{LP}
%
\begin{bmisc}[auto]
\bauthor{\bsnm{Lyons},~\bfnm{R.}\binits{R.}}
\AND
\bauthor{\bsnm{Peres},~\bfnm{Y.}\binits{Y.}}
\bhowpublished{Probability on trees and networks. Book
draft. Available at \surl{http://mypage.iu.edu/\textasciitilde rdlyons/}.}
\end{bmisc}
%
\bptok{imsref}%
\endbibitem

%b65 #&#
\bibitem{M66}
%
\begin{barticle}[mr]
\bauthor{\bsnm{Margulis},~\bfnm{G.~A.}\binits{G.~A.}}
(\byear{1966}).
\btitle{Positive harmonic functions on nilpotent groups}.
\bjournal{Soviet Math. Dokl.}
\bvolume{7}
\bpages{241--244}.
\bid{issn={0197-6788}, mr={0222217}}
\end{barticle}
%
\bptok{imsref}%
% NOT OUTPUTED:
% issn = 0197-6788
% fjournal = Soviet Mathematics. Doklady
\endbibitem

%b66 #&#
\bibitem{MP07}
%
\begin{barticle}[mr]
\bauthor{\bsnm{Mathieu},~\bfnm{P.}\binits{P.}} \AND
\bauthor{\bsnm{Piatnitski},~\bfnm{A.}\binits{A.}}
(\byear{2007}).
\btitle{Quenched invariance principles for random walks on percolation
clusters}.
\bjournal{Proc. R. Soc. Lond. Ser. A Math. Phys. Eng. Sci.}
\bvolume{463}
\bpages{2287--2307}.
\bid{doi={10.1098/rspa.2007.1876}, issn={1364-5021}, mr={2345229}}
\end{barticle}
%
\bptok{imsref}%
% NOT OUTPUTED:
% issn = 1364-5021
% url = http://dx.doi.org/10.1098/rspa.2007.1876
% number = 2085
% fjournal = Proceedings of The Royal Society of London. Series A.
%Mathematical, Physical and Engineering Sciences
\endbibitem

%b67 #&#
\bibitem{Mos61}
%
\begin{barticle}[mr]
\bauthor{\bsnm{Moser},~\bfnm{J{\"u}rgen}\binits{J.}}
(\byear{1961}).
\btitle{On {H}arnack's theorem for elliptic differential equations}.
\bjournal{Comm. Pure Appl. Math.}
\bvolume{14}
\bpages{577--591}.
\bid{issn={0010-3640}, mr={0159138}}
\end{barticle}
%
\bptok{imsref}%
% NOT OUTPUTED:
% issn = 0010-3640
% fjournal = Communications on Pure and Applied Mathematics
\endbibitem

%b68 #&#
\bibitem{Mos64}
%
\begin{barticle}[mr]
\bauthor{\bsnm{Moser},~\bfnm{J{\"u}rgen}\binits{J.}}
(\byear{1964}).
\btitle{A {H}arnack inequality for parabolic differential equations}.
\bjournal{Comm. Pure Appl. Math.}
\bvolume{17}
\bpages{101--134}.
\bid{issn={0010-3640}, mr={0159139}}
\end{barticle}
%
\bptok{imsref}%
% NOT OUTPUTED:
% issn = 0010-3640
% fjournal = Communications on Pure and Applied Mathematics
\endbibitem

%b69 #&#
\bibitem{Nas58}
%
\begin{barticle}[mr]
\bauthor{\bsnm{Nash},~\bfnm{J.}\binits{J.}}
(\byear{1958}).
\btitle{Continuity of solutions of parabolic and elliptic equations}.
\bjournal{Amer. J. Math.}
\bvolume{80}
\bpages{931--954}.
\bid{issn={0002-9327}, mr={0100158}}
\end{barticle}
%
\bptok{imsref}%
% NOT OUTPUTED:
% issn = 0002-9327
% fjournal = American Journal of Mathematics
\endbibitem

%b70 #&#
\bibitem{PV82}
%
\begin{bincollection}[mr]
\bauthor{\bsnm{Papanicolaou},~\bfnm{George~C.}\binits{G.~C.}} \AND
\bauthor{\bsnm{Varadhan},~\bfnm{S.~R.~S.}\binits{S.~R.~S.}}
(\byear{1982}).
\btitle{Diffusions with random coefficients}.
In \bbooktitle{Statistics and Probability: Essays in Honor of {C}.
{R}. {R}ao}
\bpages{547--552}.
\bpublisher{North-Holland},
\blocation{Amsterdam}.
\bid{mr={0659505}}
\end{bincollection}
%
\bptok{imsref}%
\endbibitem

%b71 #&#
\bibitem{PSC}
%
\begin{bmisc}[auto]
\bauthor{\bsnm{Pittet},~\bfnm{C.}\binits{C.}}
\AND
\bauthor{\bsnm{Saloff-Coste},~\bfnm{L.}\binits{L.}}
\bhowpublished{A survey on the relationships between volume growth,
isoperimetry,
and the behavior of simple random walk on Cayley graphs, with
examples.
Preprint. Available at \surl{http://www.math.cornell.edu/\\\textasciitilde
lsc/surv.ps.gz}.}
\end{bmisc}
%
\bptok{imsref}%
\endbibitem

%b72 #&#
\bibitem{S09}
%
\begin{bmisc}[auto]
\bauthor{\bsnm{Sarig},~\bfnm{O.}\binits{O.}}
\bhowpublished{Lecture notes on ergodic theory.
Available at
\surl{http://www.wisdom.\\weizmann.ac.il/\textasciitilde
sarigo/506/ErgodicNotes.pdf}.}
\end{bmisc}
%
\bptok{imsref}%
\endbibitem

%b73 #&#
\bibitem{ST}
%
\begin{barticle}[mr]
\bauthor{\bsnm{Shalom},~\bfnm{Yehuda}\binits{Y.}} \AND
\bauthor{\bsnm{Tao},~\bfnm{Terence}\binits{T.}}
(\byear{2010}).
\btitle{A finitary version of {G}romov's polynomial growth theorem}.
\bjournal{Geom. Funct. Anal.}
\bvolume{20}
\bpages{1502--1547}.
\bid{doi={10.1007/s00039-010-0096-1}, issn={1016-443X}, mr={2739001}}
\bptnote{check year}%
\end{barticle}
%
\bptok{imsref}%
% NOT OUTPUTED:
% issn = 1016-443X
% url = http://dx.doi.org/10.1007/s00039-010-0096-1
% number = 6
% coden = GFANFB
% fjournal = Geometric and Functional Analysis
\endbibitem

%b74 #&#
\bibitem{SS04}
%
\begin{barticle}[mr]
\bauthor{\bsnm{Sidoravicius},~\bfnm{Vladas}\binits{V.}} \AND
\bauthor{\bsnm{Sznitman},~\bfnm{Alain-Sol}\binits{A.-S.}}
(\byear{2004}).
\btitle{Quenched invariance principles for walks on clusters of
percolation or among random conductances}.
\bjournal{Probab. Theory Related Fields}
\bvolume{129}
\bpages{219--244}.
\bid{doi={10.1007/s00440-004-0336-0}, issn={0178-8051}, mr={2063376}}
\end{barticle}
%
\bptok{imsref}%
% NOT OUTPUTED:
% issn = 0178-8051
% url = http://dx.doi.org/10.1007/s00440-004-0336-0
% number = 2
% coden = PTRFEU
% fjournal = Probability Theory and Related Fields
\endbibitem

%b75 #&#
\bibitem{T10}
%
\begin{bmisc}[auto]
\bauthor{\bsnm{Tao},~\bfnm{T.}\binits{T.}}
(\byear{2010}).
\bhowpublished{A proof of Gromov's theorem. Blog.
Available at
\surl{http://\\terrytao.wordpress.com/2010/02/18/a-proof-of-gromovs-theorem/}.}
\end{bmisc}
%
\bptok{imsref}%
\endbibitem

%b76 #&#
\bibitem{HJ04}
%
\begin{barticle}[mr]
\bauthor{\bsnm{van~der Hofstad},~\bfnm{Remco}\binits{R.}} \AND
\bauthor{\bsnm{J{\'a}rai},~\bfnm{Antal~A.}\binits{A.~A.}}
(\byear{2004}).
\btitle{The incipient infinite cluster for high-dimensional unoriented
percolation}.
\bjournal{J. Stat. Phys.}
\bvolume{114}
\bpages{625--663}.
\bid{doi={10.1023/B:JOSS.0000012505.39213.6a}, issn={0022-4715}, mr={2035627}}
\end{barticle}
%
\bptok{imsref}%
% NOT OUTPUTED:
% issn = 0022-4715
% url = http://dx.doi.org/10.1023/B:JOSS.0000012505.39213.6a
% number = 3-4
% coden = JSTPSB
% fjournal = Journal of Statistical Physics
\endbibitem

%b77 #&#
\bibitem{Var85}
%
\begin{barticle}[mr]
\bauthor{\bsnm{Varopoulos},~\bfnm{Nicholas~Th.}\binits{N.~T.}}
(\byear{1985}).
\btitle{Long range estimates for {M}arkov chains}.
\bjournal{Bull. Sci. Math. (2)}
\bvolume{109}
\bpages{225--252}.
\bid{issn={0007-4497}, mr={0822826}}
\end{barticle}
%
\bptok{imsref}%
% NOT OUTPUTED:
% issn = 0007-4497
% number = 3
% coden = BSMQA9
% fjournal = Bulletin des Sciences Math\'ematiques. 2e S\'erie
\endbibitem

%b78 #&#
\bibitem{W94}
%
\begin{barticle}[mr]
\bauthor{\bsnm{Woess},~\bfnm{Wolfgang}\binits{W.}}
(\byear{1994}).
\btitle{Random walks on infinite graphs and groups---a survey on
selected topics}.
\bjournal{Bull. Lond. Math. Soc.}
\bvolume{26}
\bpages{1--60}.
\bid{doi={10.1112/blms/26.1.1}, issn={0024-6093}, mr={1246471}}
\end{barticle}
%
\bptok{imsref}%
% NOT OUTPUTED:
% issn = 0024-6093
% url = http://dx.doi.org/10.1112/blms/26.1.1
% number = 1
% coden = LMSBBT
% fjournal = The Bulletin of the London Mathematical Society
\endbibitem

%b79 #&#
\bibitem{Yau75}
%
\begin{barticle}[mr]
\bauthor{\bsnm{Yau},~\bfnm{Shing~Tung}\binits{S.~T.}}
(\byear{1975}).
\btitle{Harmonic functions on complete {R}iemannian manifolds}.
\bjournal{Comm. Pure Appl. Math.}
\bvolume{28}
\bpages{201--228}.
\bid{issn={0010-3640}, mr={0431040}}
\end{barticle}
%
\bptok{imsref}%
% NOT OUTPUTED:
% issn = 0010-3640
% fjournal = Communications on Pure and Applied Mathematics
\endbibitem
\end{thebibliography}
\end{document}